\theoremstyle{plain}\newtheorem{Theorem}{Theorem}[section]
\theoremstyle{plain}
\theoremstyle{plain}\newtheorem{Corollary}[Theorem]{Corollary}
\theoremstyle{plain}\newtheorem{Lemma}[Theorem]{Lemma}
\theoremstyle{plain}\newtheorem{Proposition}[Theorem]{Proposition}
\theoremstyle{plain}
\theoremstyle{definition}\newtheorem{Definition}[Theorem]{Definition}
\theoremstyle{definition}
\theoremstyle{definition}
\theoremstyle{definition}
\theoremstyle{definition}\newtheorem{Remark}[Theorem]{Remark}
\theoremstyle{definition}
\def\cL{{\mathcal{L}}}
\def\cB{{\mathcal{B}}}
\def\cR{{\mathcal{R}}}
\def\cI{{\mathcal{I}}}
\def\cN{{\mathcal{N}}}
\def\ker{{\rm{Ker}}}
\def\HH{{\rm{HH}}}
\definecolor{tealgreen}{rgb}{0.0, 0.71, 0.5}
\title[The first Hochschild cohomology as a Lie algebra]{The first Hochschild cohomology as a Lie algebra}
\author[Rubio y Degrassi]{Lleonard Rubio y Degrassi}
\address{
Departamento de Matem\'aticas \\
Universidad de Murcia \\
30100 Murcia\\
Spain
}
\email{lleonard.rubio@um.es}
\author[Schroll]{Sibylle Schroll}
\address{
Department of Mathematics \\
University of Leicester \\
University Road  \\
Leicester LE1 7RH, UK
}
\email{schroll@le.ac.uk}
\author[Solotar]{Andrea Solotar}
\address{
Departamento de Matem\'atica \\
 FCEyN Universidad de Buenos Aires \\ 
 Pabellon I - Ciudad Universitaria \\
1428 - Buenos Aires \\ 
  Argentina}
\email{asolotar@dm.uba.ar}
 \date{}
\thanks{\noindent\smallskip
{\it Acknowledgements.} The
first  author has been supported by the Oberwolfach Leibniz Fellows Program, the DAAD Short-Term Grant 57378443, Fundaci\'on S\'eneca of Murcia
19880/GERM/15 and INdAM postdoctoral grant.  The second author is supported by EPSRC through an Early Career Fellowship EP/P016294/1.  The third author, a research member of CONICET (Argentina) and a Senior Associate Professor ICTIP, has been partially supported by the projects UBACYT 20020160100613BA and PIP-CONICET 112201501
00483CO. The first and the third author thank the University of Leicester for their  hospitality. }
\subjclass[2010]{16E40, 16G30, 16D90}
\begin{document}
\begin{abstract}
In this paper we study sufficient conditions for the solvability of the first Hochschild cohomology of a finite dimensional algebra as a Lie 
algebra in terms of its Ext-quiver in arbitrary characteristic. In particular, we show that if the quiver has no parallel arrows and no 
loops then the first Hochschild cohomology is solvable. For quivers containing loops, we determine easily verifiable sufficient conditions
for the solvability of the first Hochschild cohomology.
We apply these criteria to show the solvability of the first Hochschild cohomology space
for  large families of algebras, namely,  several families of self-injective tame algebras  including all tame blocks of finite groups and 
some wild algebras including most quantum 
complete intersections.
\end{abstract}

\raggedbottom
\maketitle


\section{Introduction}
Let $K$ be an algebraically closed field and let $A$ be a
finite dimensional $K$-algebra.
The problem of describing the Hochschild cohomology of $A$ as a Gerstenhaber algebra and in particular the first Hochschild cohomology space 
as a Lie algebra, and how this structure is related to $A$, has been studied in several recent articles, see for example \cite{ALS, 
CSS, ER, MNPRS, SA}. 
The Gerstenhaber bracket in Hochschild cohomology has been defined more than fifty years ago. Recently,  new methods to explicitly compute it
have been introduced rendering the problem more tractable. An  interesting question arising  is which 
Lie algebras can actually appear in this way. The results in this article contribute to an answer to  this question.  
Namely, we give sufficient criteria for the solvability as a Lie algebra of the first degree Hochschild  cohomology of $A$, denoted 
by $\mathrm{HH}^1(A)$, in terms of its Ext-quiver and its relations.


The Hochschild cohomology  $\mathrm{HH}^{\bullet}(A)=\bigoplus_{n\geq 0}\mathrm{HH}^n(A)$  of  $A$ 
has a very rich structure. It is an associative, graded-commutative algebra with the cup product. It also has a graded Lie bracket 
of  degree $-1$ and both structures are related by the graded Poisson identity. 
In particular, $\mathrm{HH}^{\bullet}(A)$ is a Gerstenhaber algebra, $\mathrm{HH}^{1}(A)$ is a Lie algebra with bracket induced 
by the usual commutator of derivations and $\mathrm{HH}^{\bullet}(A)$ is a Lie module for this Lie algebra.
All these structures are invariant under derived equivalence \cite{K}.

In this paper we study the solvability of the Lie algebra given by the first Hochschild cohomology of a finite dimensional algebra. We develop sufficient conditions on the Ext-quiver with relations so that this Lie algebra is solvable and we give several applications of our methods. In particular, we show that for wild algebras the first Hochschild cohomology  is sometimes  solvable and sometimes semi-simple. We show that it is solvable for most quantum complete intersections for arbitrary parameter and semi-simple for a family of algebras related to Beilinson algebras. 

The first condition for solvability is based on the Ext-quiver having no parallel arrows and no loops. 

\begin{Theorem}  (see Theorem~\ref{Liestronglysolvable-general})
\label{parallel}
Let $K$ be an algebraically closed field and let $A$ be a  finite
dimensional 
$K$-algebra such that \sloppy $\mathrm{dim}_K(\mathrm{Ext}^1_A(S,S))= 0$ and $\mathrm{dim}_K(\mathrm{Ext}^1_A(S,T))\leq1$, 
for all non-isomorphic simple $A$-modules $S$ and $ T$. Then $\mathrm{HH}^1 (A)$ is a strongly solvable Lie algebra. 

In particular,  $\mathrm{HH}^1 (A)$ is a solvable Lie algebra. 
\end{Theorem}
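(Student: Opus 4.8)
The plan is to realize $\mathrm{HH}^1(A)$ as a quotient of a filtered Lie algebra of derivations whose degree-zero piece is abelian and whose positive part is nilpotent. Write $A \cong KQ/I$ where $Q$ is the Ext-quiver of $A$ and $I$ is an admissible ideal; the two hypotheses say precisely that $Q$ has no loops and no parallel arrows between distinct vertices. Identifying $\mathrm{HH}^1(A) = \mathrm{Der}(A)/\mathrm{Inn}(A)$, I would first invoke the standard normalization: every class in $\mathrm{HH}^1(A)$ has a representative $d$ vanishing on the vertex idempotents $e_1, \dots, e_n$, obtained by correcting an arbitrary derivation by an inner derivation $\mathrm{ad}_x$ with a suitable $x$ built from the $d(e_i)$. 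Let $D \subseteq \mathrm{Der}(A)$ be the Lie subalgebra of such idempotent-fixing derivations; then $\mathrm{HH}^1(A)$ is a Lie algebra quotient of $D$, so it suffices to prove that $D$ is strongly solvable, i.e. that $[D,D]$ is nilpotent.

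Next I would filter $D$. For $d \in D$ and an arrow $\alpha \colon i \to j$ one has $d(\alpha) \in e_j A e_i \subseteq \mathrm{rad}(A)$. Set $D_{\geq p} = \{ d \in D : d(\alpha) \in \mathrm{rad}(A)^p \text{ for every arrow } \alpha\}$, so $D = D_{\geq 1} \supseteq D_{\geq 2} \supseteq \cdots$, and $D_{\geq L+1} = 0$ where $\mathrm{rad}(A)^{L+1} = 0$. The key elementary estimate, from the Leibniz rule, is that a derivation in $D_{\geq p}$ raises the radical filtration by $p-1$, whence $[D_{\geq p}, D_{\geq q}] \subseteq D_{\geq p+q-1}$. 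In particular the lower central series of $D_{\geq 2}$ lands in the $D_{\geq k+1}$, so $N := D_{\geq 2}$ is nilpotent.

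The crux, and where the hypotheses on $Q$ enter, is to show $[D, D] \subseteq D_{\geq 2}$, i.e. that $D/D_{\geq 2}$ is abelian. An element of $D/D_{\geq 2}$ is recorded by the class of $d(\alpha)$ in $\mathrm{rad}(A)/\mathrm{rad}(A)^2$, the span of the arrows; since there are no loops and no parallel arrows, this class is forced to be a scalar multiple $\lambda_\alpha \alpha$ of $\alpha$ itself. Thus modulo $D_{\geq 2}$ every derivation is diagonal in the arrows, and a direct Leibniz computation shows that the arrow-component of $[d, d'](\alpha)$ equals $(\lambda_\alpha \mu_\alpha - \mu_\alpha \lambda_\alpha)\alpha = 0$. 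Hence $[D,D] \subseteq N$ is nilpotent, so $D$ is strongly solvable, and therefore so is its quotient $\mathrm{HH}^1(A)$; strong solvability implies solvability.

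I expect the main obstacle to be the bookkeeping in the normalization step and in verifying the bracket estimate compatibly with the relations $I$: one must check that passing to $A = KQ/I$ rather than the free path algebra $KQ$ disturbs neither the filtration $D_{\geq p}$ nor the identification of $D/D_{\geq 2}$ with scalars on arrows. Because derivations are required to respect $I$ and $I$ is admissible, these compatibilities should hold, but they are the points needing genuine care, whereas the formal solvability conclusion is then immediate.
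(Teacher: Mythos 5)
Your argument is correct, and it is a genuinely different route from the one taken in the paper. The paper first proves the graded case (Theorem~3.1) using the parallel-path cochain complex of Chouhy--Solotar: the absence of loops forces $\cL_0=0$, the absence of parallel arrows forces $\cL_1$ to be abelian, and the grading relation $[\cL_i,\cL_j]\subseteq\cL_{i+j-1}$ pushes the derived subalgebra into the nilpotent part $\cN=\oplus_{i\geq 2}\cL_i$. The ungraded statement is then deduced via Eisele--Raedschelders: since $Q$ has no loops one has $\mathrm{HH}^1(A)=\mathrm{HH}^1_{rad}(A)$, and solvability is transported from $A/J(A)^2$ up to $A$ through Lie algebra morphisms with abelian kernels. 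You instead work directly on the derivation algebra of $A$ itself: after normalizing representatives to vanish on the idempotents, you filter by radical powers, prove $[D_{\geq p},D_{\geq q}]\subseteq D_{\geq p+q-1}$, and use the quiver hypotheses to force $d(\alpha)\equiv\lambda_\alpha\alpha$ modulo $\mathrm{rad}(A)^2$, so that $D/D_{\geq 2}$ is abelian. Your filtration is the ungraded shadow of the paper's grading, but your route is more self-contained (no comparison maps with the reduced bar resolution, no passage to $A/J(A)^2$) and it has a concrete advantage: it yields \emph{strong} solvability of $\mathrm{HH}^1(A)$ directly, whereas the reduction via \cite[Cor.~2.5]{ER} only transports solvability --- an extension with abelian kernel of a Lie algebra with nilpotent derived subalgebra need not itself have nilpotent derived subalgebra, so the paper's ungraded proof really needs an argument like yours to justify the word ``strongly'' in its conclusion. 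What the paper's approach buys in exchange is reusable machinery (the sets $\Sigma_i$, the graded decomposition of $\ker\delta^1$, and the reduction to $A/J(A)^2$) that carries all the later criteria for quivers with loops and parallel arrows. Note finally that your proof is essentially the one in Linckelmann--Rubio y Degrassi \cite{LR}, which the paper explicitly acknowledges as an alternative proof of this theorem.
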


The solvability of the first Hochschild cohomology has recently been extensively studied \cite{CSS, ER, LR} and Theorem~\ref{parallel} also appears in \cite{LR} with a different proof. 
In this paper,  we combine results from \cite{ER} with  a version for graded algebras of Theorem 1.1, to obtain Theorem 1.1. Our next main result concerns algebras whose Ext-quiver may have loops. 

\begin{Theorem}(see Theorem~\ref{trivialclasses-nongraded})
\label{trivialclasses-nongraded-introduction}
Let $K$ be an algebraically closed field and let $A$  be a finite dimensional  
$K$-algebra 
such that \sloppy
$\rm{dim_K Ext}^1_A(S,T) \leq1$ for  simple $A$-modules $S$ and $T$. Suppose that  there is no derivation of $A$ sending a loop in the Ext-quiver to the identity. Then $\mathrm{HH}^1(A)$ is a solvable Lie algebra. 
\end{Theorem}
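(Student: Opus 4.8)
The plan is to reduce the statement to the theorem for graded algebras (Theorem~\ref{parallel}, stated here as the no-parallel-arrows, no-loops case) by carefully handling the contribution of the loops. First I would set up the standard description of $\mathrm{HH}^1(A)$ for $A = KQ/I$ as the quotient of the space of derivations by the inner derivations, and recall that derivations are determined by their values on the arrows of the Ext-quiver $Q$. Under the hypothesis $\dim_K \mathrm{Ext}^1_A(S,T) \le 1$, there is at most one arrow between any two distinct vertices, so the only possible multiplicities come from loops. The key structural observation I would make is that $\mathrm{HH}^1(A)$ carries a grading (or at least a filtration) coming from the path-length grading on $KQ$, and that the degree-zero part is controlled by the loops at each vertex.

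\smallskip

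The main idea is to split off the loop contribution. I would isolate the subspace $\mathfrak{h} \subseteq \mathrm{HH}^1(A)$ spanned by the classes of derivations concentrated on the loops, and argue that $\mathrm{HH}^1(A)/\mathfrak{h}$ is handled by the graded version of Theorem~\ref{parallel}: once we quotient out the loop-derivations, the remaining quiver data has no parallel arrows and no loops in the relevant sense, so the induced Lie algebra is strongly solvable, hence solvable. The hypothesis that there is no derivation sending a loop to the identity is exactly what rules out a copy of $\mathfrak{sl}_2$ or $\mathfrak{gl}_2$ type behaviour: a derivation $d$ with $d(\ell) = \ell$ (the Euler/degree derivation on a loop $\ell$) together with other loop-derivations could otherwise generate a non-solvable semisimple piece. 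I would therefore show that under this hypothesis the loop part $\mathfrak{h}$ is itself solvable, e.g. by exhibiting it as a Lie algebra of derivations that is (up to inner derivations) built out of nilpotent or abelian pieces, using that the identity-valued derivation is excluded.

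\smallskip

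Concretely, I expect the argument to combine two ingredients: (1) an extension/short-exact-sequence argument for Lie algebras, namely that if $\mathfrak{h} \trianglelefteq \mathrm{HH}^1(A)$ is a solvable ideal and the quotient $\mathrm{HH}^1(A)/\mathfrak{h}$ is solvable, then $\mathrm{HH}^1(A)$ is solvable; and (2) the verification that the loop-derivations indeed form an ideal. The ideal property should follow from the grading: loop-derivations are the lowest-degree (degree-zero) part, and bracketing with anything of non-negative degree cannot lower the degree below zero in a way that escapes $\mathfrak{h}$, so $\mathfrak{h}$ is closed under the bracket by the full derivation algebra. This is where the no-parallel-arrows hypothesis feeds in: with at most one arrow between distinct vertices, the positive-degree part of $\mathrm{HH}^1(A)$ has the structure needed for the graded theorem to apply to the quotient.

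\smallskip

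The hard part will be controlling the solvability of the loop part $\mathfrak{h}$ itself. A priori the loop-derivations at a single vertex could form a copy of $\mathfrak{gl}_n$ if there are $n$ loops, which is not solvable for $n \ge 2$. I would need the hypothesis on the absence of an identity-valued derivation to collapse this: the point is that such derivations act on the span of the loops (or on $I/I^2$ in the appropriate degree) and the non-existence of a derivation hitting the identity forces the relevant operators to be trace-zero or nilpotent, or more precisely forces the image in $\mathrm{HH}^1(A)$ to miss the semisimple Levi factor. Pinning down this reduction precisely — translating ``no derivation sends a loop to the identity'' into a concrete constraint that kills the semisimple part of $\mathfrak{h}$ — is the technical crux, and I would expect to lean on the explicit description of $\mathrm{HH}^1$ of the relations together with a trace or weight argument to finish.
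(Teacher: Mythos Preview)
Your proposal has two genuine misreadings of the hypotheses that derail the argument, and it misses the actual reduction the paper uses.

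First, the condition $\dim_K\mathrm{Ext}^1_A(S,T)\le 1$ is assumed for \emph{all} pairs of simples, including $S=T$. Hence there is at most one loop at each vertex. Your worry that ``loop-derivations at a single vertex could form a copy of $\mathfrak{gl}_n$ for $n\ge 2$'' cannot arise, and the whole analysis you outline for taming a semisimple piece inside $\mathfrak{h}$ is aimed at a nonexistent obstacle.

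Second, ``no derivation sends a loop to the identity'' does not mean no Euler-type derivation $d(\ell)=\ell$; it means no derivation with $d(\alpha)=e_{s(\alpha)}$, i.e.\ the set $\Sigma_0(A)$ of summands $\alpha\,\|\,e$ occurring in cocycles is empty. This is a degree $-1$ phenomenon, not degree $0$. Its role is not to kill an $\mathfrak{sl}_2$ inside the loop sector but to guarantee that every derivation preserves the Jacobson radical, so that $\mathrm{HH}^1_{\mathrm{rad}}(A)=\mathrm{HH}^1(A)$. With that misidentification, your proposed ideal $\mathfrak{h}$ and the ``trace/weight'' argument have no clear content.

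The paper's route is quite different and avoids any direct decomposition of $\mathrm{HH}^1(A)$. One passes to the graded radical-square-zero algebra $A/J(A)^2$, where the absence of parallel arrows forces $\mathcal{L}_1$ to be abelian and the graded criterion (Theorem~\ref{trivialclasses}) yields solvability of $\mathrm{HH}^1(A/J(A)^2)$. Eisele--Raedschelders then give, for each $n\ge 3$, a Lie map $\mathrm{HH}^1_{\mathrm{rad}}(A/J(A)^n)\to\mathrm{HH}^1_{\mathrm{rad}}(A/J(A)^{n-1})$ with abelian kernel, so solvability propagates up to $\mathrm{HH}^1_{\mathrm{rad}}(A)$. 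Finally the hypothesis $\Sigma_0(A)=\emptyset$ is exactly what forces $\mathrm{HH}^1_{\mathrm{rad}}(A)=\mathrm{HH}^1(A)$ (with one exceptional local case $K[x]/(x^n)$ in characteristic $2$ handled directly). None of these steps appear in your plan; in particular the Eisele--Raedschelders reduction is the engine that replaces the filtration argument you sketch, and it is what makes the ungraded case go through.
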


We then give some further criteria for the solvability of $\HH^1$ for algebras whose Ext-quiver has at most one loop at each vertex if there is more than one vertex and at most two loops 
in the local case.

Furthermore, we show some criteria for the solvability of the first Hochschild cohomology of graded algebras and we apply these to show that the first Hochschild cohomology of most quantum complete intersections is  strongly solvable.

In \cite{Skow} an extensive if not comprehensive classification of self-injective tame algebras up to derived equivalence has been given. We consider all families of 
symmetric algebras in this classification and we show that for almost all these algebras    the first Hochschild cohomology is solvable in arbitrary characteristic. We also show that almost all tame blocks of finite groups  have solvable first Hochschild cohomology.

\begin{Theorem} 
\label{thm-Skowronski} (see Theorem~\ref{selfinjetivetame}) 
 Let $K$ be an algebraically closed field of arbitrary characteristic. 
 Let $A$ be a symmetric finite dimensional  $K$-algebra  of tame representation type appearing in the classification in \cite{Skow} which is  not derived equivalent to  $K[X]/(X^r)$ 
when $\mathrm{char}(K)\mid r$,  the trivial extension of the Kronecker algebra if $\rm{char}(K) \neq 2$  or  $K[X,Y]/(X^2, Y^2)$  if the characteristic is $2$. 
Then $\HH^1(A)$ is a  solvable Lie algebra. 

In particular, let $A$ be a symmetric tame algebra of dihedral, semi-dihedral or quaternion type not derived equivalent to $K[X,Y]/(X^2, Y^2)$  if the characteristic is $2$. Then $\mathrm{HH^1}(A)$ is a solvable Lie algebra.
\end{Theorem}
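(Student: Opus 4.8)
The plan is to reduce Theorem~\ref{thm-Skowronski} to the two general criteria already stated, namely Theorems~\ref{parallel} and \ref{trivialclasses-nongraded-introduction}, by exploiting the derived invariance of the Lie algebra structure on $\mathrm{HH}^1$. Since all the structures on Hochschild cohomology are preserved under derived equivalence \cite{K}, and solvability is a property of the abstract Lie algebra, it suffices to verify the hypotheses on the Ext-quiver with relations for \emph{one} representative in each derived equivalence class in the classification of \cite{Skow}. So the first step is to pass to the list of normal forms (basic algebras given by quiver and relations) that appear in Skowro\'nski's classification of symmetric tame algebras, together with the separate well-known lists for dihedral, semi-dihedral and quaternion type.

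Next I would organise the representatives by the shape of their Ext-quivers. For those quiver-and-relation normal forms with no loops and with $\mathrm{dim}_K \mathrm{Ext}^1_A(S,T) \leq 1$ for distinct simples, Theorem~\ref{parallel} applies directly and gives strong solvability, hence solvability. The more delicate families are precisely the local algebras and those whose quivers carry loops: here I would invoke Theorem~\ref{trivialclasses-nongraded-introduction}, so the task becomes checking, family by family, that no derivation sends a loop to the identity (equivalently, that no loop $\alpha$ admits a derivation with $\delta(\alpha)$ equal to the appropriate primitive idempotent modulo commutators/relations). In practice this is read off from the relations: if every loop appears in a relation forcing some positive power to lie in the radical square in a way incompatible with $\delta(\alpha) = e$, the criterion is satisfied. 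I expect the genuinely exceptional cases to be exactly the algebras singled out in the statement — $K[X]/(X^r)$ with $\mathrm{char}(K)\mid r$, the trivial extension of the Kronecker algebra in odd characteristic, and $K[X,Y]/(X^2,Y^2)$ in characteristic $2$ — where such an identity-producing derivation genuinely exists and $\mathrm{HH}^1$ fails to be solvable (indeed becomes semisimple or contains a semisimple summand).

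The second, named statement is then a corollary: for $A$ of dihedral, semi-dihedral or quaternion type, these are by definition among the symmetric tame algebras and their derived-equivalence normal forms are explicitly tabulated, so I would simply point to the relevant entries treated in the first part and note that the only excluded case in these three families is $K[X,Y]/(X^2,Y^2)$ in characteristic $2$. The main obstacle, and where essentially all the real work lies, is the case-by-case verification of the loop-to-identity condition of Theorem~\ref{trivialclasses-nongraded-introduction} across the local and loop-containing families, since this requires understanding the space of derivations modulo inner ones for each normal form and confirming it does not contain the offending element; the characteristic-dependence of the excluded cases signals that this check is sensitive to how the relations interact with $\mathrm{char}(K)$, which is where I would concentrate the bulk of the detailed computation.
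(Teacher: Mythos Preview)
Your overall strategy---use derived invariance, pass to the normal forms in the Skowro\'nski and Erdmann--Holm lists, and verify the quiver hypotheses family by family---matches the paper's approach exactly. However, there is a genuine gap: Theorems~\ref{parallel} and~\ref{trivialclasses-nongraded-introduction} do \emph{not} suffice to cover all the families. Theorem~\ref{trivialclasses-nongraded-introduction} requires $\dim_K\mathrm{Ext}^1_A(S,T)\le 1$ for \emph{all} pairs of simples, including $S=T$; this means at most one loop at each vertex and no parallel arrows. But every local algebra of dihedral, semi-dihedral or quaternion type has a quiver with a single vertex and \emph{two} loops $X,Y$, so $\dim_K\mathrm{Ext}^1_A(S,S)=2$ and neither of your two criteria applies. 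Similarly, the trivial extensions of canonical algebras (covering the Euclidean and tubular singular-Cartan cases) acquire a pair of parallel arrows from the sink to the source of the canonical quiver, and Brauer graph algebras can also have parallel arrows; again both criteria fail.

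The paper handles these missing cases with additional, more specialised tools developed in Section~\ref{conditions}: a separate criterion for local graded algebras with two loops (Proposition~\ref{localB0} and Theorem~\ref{thm:graded loops}), graded versions of the solvability criteria that tolerate parallel arrows provided one checks that $\alpha\parallel\beta$ and $\beta\parallel\alpha$ are not simultaneously in $\Sigma_1$ (Theorem~\ref{trivialclasses}, Corollaries~\ref{loops} and~\ref{loops2}), and the Eisele--Raedschelders reduction via $\mathrm{HH}^1_{rad}$ to the graded algebra $A/J(A)^2$ or $A/J(A)^3$. In a few small cases (e.g.\ the local dihedral family~(2) with $n=2$, and $D(m)$, $D'(m)$) the paper simply computes $\ker\delta^1$ by hand. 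So your plan identifies the right organising principle, but the two introduction-level theorems are too coarse; the actual case analysis requires the finer graded and local criteria plus the $\mathrm{HH}^1_{rad}$ machinery.
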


 We note that there  are two exceptions for the solvability of $\HH^1(A)$  in the first part of the theorem  and one in the second part. Namely,   
 if char$(K)|r$ and  $r \geq3$, the Lie algebra $\HH^1(K[X]/(X^{r}))$ is perfect. In particular, if $r = p = \rm{char}(K)$ then 
 it is simple and it is the so-called  Jacobson-Witt algebra.  Similarly,  $\HH^1(K[X,Y]/(X^2, Y^2))$ in characteristic 2 is also a Jacobson--Witt algebra and not solvable.  The second exception in the first part is if $A$ is   derived equivalent to the trivial extension of the Kronecker algebra 
 and $\rm{char}(K) \neq 2$, in which case the obtained Lie algebra is isomorphic to
$\mathfrak{sl}_2(K)$, see \cite{CSS}.

 We also show that the first Hochschild cohomology of a Brauer graph algebra with any multiplicity function is solvable in arbitrary characteristic, that is with the exception of the trivial extension of the Kronecker quiver in characteristic different from 2 and some other cases in characteristic 2.

The article is structured as follows. Section~\ref{Background} contains background material on Lie algebras both in characteristic zero and in positive characteristic. It also contains a brief introduction of the Lie algebra structure of the first Hochschild cohomology of a finite dimensional algebra. Section~\ref{conditions} contains the main results of the paper, that is criteria for the solvability of the first Hochschild cohomology of a finite dimensional algebra in terms of its Ext-quiver and the relations on the quiver.   In Section~\ref{applications}, we apply the results of Section~\ref{conditions} to several families of algebras such  as symmetric   algebras of tame representation type including Brauer graph algebras and blocks of groups algebras of finite groups as well as quantum complete intersections. We end the paper with an example of an infinite family of algebras for which the first Hochschild cohomology is semi-simple.

{\bf Acknowledgments.} We thank Cristian Chaparro and Mariano Su\'arez \'Alvarez for helpful discussions. We thank the referee for the very careful reading of the paper and for many helpful suggestions and for pointing out a consequential oversight in a previous version of the paper. The third and the first author thank the University of Leicester for its hospitality during their stays.

\section*{Conventions}

Let $K$ be an algebraically closed field. For a finite dimensional  $K$-algebra $A$, the Ext-quiver of $A$ is the quiver  whose vertices are in bijection with the isomorphism classes of simple $A$-modules and where the number of arrows from a vertex corresponding to a simple module $S$ to a simple module $T$ is given by ${\rm dim}_K({\rm Ext^1}_A(S, T))$. Furthermore, if  $A=KQ/I$ is a finite dimensional $K$-algebra, with quiver $Q = (Q_1, Q_0)$ with $Q_0$ the vertices  of $Q$ and $Q_1$ the arrows in $Q$, then unless otherwise stated, we assume that the ideal $I$ is admissible. For an arrow $a$ in $Q$, 
we write $s(a)$ for the source of $a$ and $t(a)$ for the target of $a$.    
We assume that  all modules will be right modules and we write $A^e = A \otimes_K A^{op}$.
 We denote by $J(A)$ the Jacobson radical of $A$.


\section{Background material}
\label{Background}

In this section we collect some background material on modular Lie algebras and the Lie bracket on the first Hochschild cohomology of a finite dimensional 
algebra (in any characteristic). For this let $\cL$ be a Lie algebra and recall that the derived Lie algebra $\cL^{(1)}$ is the Lie algebra defined by the linear span of all commutators $[x,y]$ for $x, y \in \cL$. We denote by
$\cL^{(i)} = [\cL^{(i-1)}, \cL^{(i-1)}]$ the $i$-th term of the derived series of $\cL$,  where $i \geq 1$ and $\cL^{(0)} = \cL$.

\subsection{Modular Lie algebras} 
We begin by collecting  some well-known facts about modular Lie algebras. 

\begin{Definition}
A Lie algebra $\mathcal{L}$ is {\it strongly solvable} if its derived subalgebra   $\cL^{(1)}$ is nilpotent. 
\end{Definition}

\begin{Remark}
Note that any strongly solvable Lie algebra is solvable but the converse only holds in characteristic zero. 
\end{Remark}

Furthermore, just as in the case of solvable Lie algebras, we have the following. 

\begin{Lemma}
\label{SuperSol}
Every subalgebra and every factor of a strongly solvable Lie algebra is strongly solvable.
\end{Lemma}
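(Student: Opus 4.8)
The plan is to prove that both properties—being a subalgebra and being a homomorphic image (factor)—preserve strong solvability, by reducing each to the analogous well-known statement for nilpotency, applied to the derived subalgebra. Recall that $\cL$ is strongly solvable exactly when $\cL^{(1)} = [\cL, \cL]$ is nilpotent, so in each case I would produce the derived subalgebra of the relevant Lie algebra, compare it with (an image of) $\cL^{(1)}$, and invoke the fact that subalgebras and quotients of nilpotent Lie algebras are nilpotent.

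For the subalgebra case, let $\cM \subseteq \cL$ be a Lie subalgebra. First I would observe that $\cM^{(1)} = [\cM, \cM] \subseteq [\cL, \cL] = \cL^{(1)}$, since the bracket of elements of $\cM$ is in particular a bracket of elements of $\cL$. Thus $\cM^{(1)}$ is a Lie subalgebra of the nilpotent Lie algebra $\cL^{(1)}$. Since every subalgebra of a nilpotent Lie algebra is nilpotent, $\cM^{(1)}$ is nilpotent, which is precisely the assertion that $\cM$ is strongly solvable.

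For the factor case, let $\pi \colon \cL \to \cL/\cI$ be the canonical projection onto the quotient by an ideal $\cI$. Here I would use that $\pi$ is a surjective Lie algebra homomorphism, so it carries commutators to commutators, giving $\pi(\cL^{(1)}) = (\cL/\cI)^{(1)}$; that is, the derived subalgebra of the quotient is the image under $\pi$ of the derived subalgebra of $\cL$. Since $\cL^{(1)}$ is nilpotent by hypothesis and the homomorphic image of a nilpotent Lie algebra is again nilpotent, $(\cL/\cI)^{(1)} = \pi(\cL^{(1)})$ is nilpotent, so $\cL/\cI$ is strongly solvable.

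The argument is essentially a bookkeeping reduction, so I do not expect a genuine obstacle; the only point requiring a little care is the inclusion $\cM^{(1)} \subseteq \cL^{(1)}$ and the equality $\pi(\cL^{(1)}) = (\cL/\cI)^{(1)}$, both of which follow immediately from the definition of the derived subalgebra as the span of all brackets together with the fact that $\pi$ is a homomorphism. The substantive input is entirely the corresponding closure properties of nilpotency under passage to subalgebras and quotients, which are standard and which I would simply cite.
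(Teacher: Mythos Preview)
Your proof is correct and is exactly the standard argument: reduce to the closure of nilpotency under subalgebras and quotients via the obvious inclusion $\cM^{(1)}\subseteq\cL^{(1)}$ and the equality $\pi(\cL^{(1)})=(\cL/\cI)^{(1)}$. The paper in fact states this lemma without proof, treating it as a routine background fact, so there is nothing further to compare.
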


\subsection{The Lie bracket on the first Hochschild cohomology}
\label{Brackets}

For a finite dimensional $K$-algebra $A = KQ/I$, we briefly recall the construction of the Lie bracket on $HH^1(A)$ as defined in  \cite{CS}. 
We have $A = E \oplus J(A)$ where $E=KQ_0$ is  a maximal 
semi-simple subalgebra of $A$ and $J(A)$ is the Jacobson radical of $A$. 
The set of paths is a basis of the path algebra $KQ$. When we consider the quotient $KQ/I$ this is not a basis anymore, but we can extract from the 
set of paths a basis consisting of some of these paths.
In order to do so we need to fix a reduction system. For this, 
let $p, q  $ be paths in  $Q$. We say that $p$ \textit{divides} $q$ if there exist paths $a,b$ in $Q$ such that 
$q = apb$. A set  $R = \{(s_i, f_i) \in Q_{\geq 0}  \times KQ\}$ is called a \textit{reduction system} if for all $(s, f)\in R$, $s$ is parallel to $f$, 
$s \neq f$ and for any $(s', f') \in R$  we have that  $s^\prime$ does not divide $s$ if $s^\prime \neq s$.  
 
Let $\leq$ be a well-order on the set $Q_0 \cup Q_1$ such that $e < \alpha$, for all $e \in Q_0$ and 
$\alpha \in Q_1$. Let $\omega: Q_1 \to \mathbb{N}  \cup \{0 \}$ be a function which we extend  to $Q_{\geq 0}$
by imposing that $\omega(e) = 0$, for all $e \in Q_0$ and $\omega(c_n \dots c_1) = \sum_{i=1}^n
\omega (c_i)$, for $c_i \in Q_1$. Let $c, d \in Q_{\geq 0}$. We write
that $c \leq_{\omega} d$ if one of the following conditions hold:
\begin{itemize}
\item $c, d \in Q_0$ and $c \leq d$, 
\item $\omega(c) < \omega(d)$, 
\item $\omega(c) = \omega(d)$, $c = c_n \dots c_1$, $d = d_m \dots d_1 \in Q_{\geq 1}$ and there exists 
$j \leq min(length(c), length(d))$ such that $c_i = d_i$ for all $1\leq i\leq j -1$ and $c_j < d_j$.
\end{itemize}

\begin{Definition}  [{\cite[Def. 2.8]{CS}}]
Consider as before a well-order $\leq $ on $Q_0 \cup Q_1$ and $\leq_{\omega}$ on $Q_{\geq 0}$.
Let $p \in KQ$ such that $p = \sum_{i=1}^n \lambda_i c_i$ with $\lambda_i \in K^{*}$, $c_i \in Q_{\geq 0}$
and $c_i <_{\omega} c_1$ for all $i\neq 1$. We define  $tip(p):=c_1$. More generally, if $X \subseteq KQ$, we 
define $tip(X) := \{tip(x) \mid x \in X \setminus \{0\}\}$.
Let
\[S := Mintip(I) = \{p \in tip(I) \mid p^\prime \notin tip(I)\mbox{ for all proper divisors $p^\prime$ of $p$}\}.\]
\end{Definition}
From Lemma 2.10 in \cite{CS} it follows that $I$ is
the two sided ideal generated by $\{s- f_s \}_{s\in S}$ for $f_s = \sum_p \lambda_p p$ where the sum is over all $p\in Q_{\geq 2}$ parallel to $s$ and such that $p 
\leq_\omega s$ and $\lambda_p \in K^*$. 
From the reduction system $R$ we obtain a minimal generating set $\cR$  of $I$ by setting 
$\cR = \{ s-f_s \mid (s, f_s) \in R \}$.

We denote by $\mathcal{B}$ the basis
of $A$ whose elements are the images of paths under the canonical map $KQ \to KQ/I$ such that they cannot be reduced using our reduction system.
We will freely refer to elements of $\cB$ as paths.   Let $Q_i$ be the set of paths consisting of $i$ arrows and let $Q_{\geq i} = \cup_{j \geq i} Q_j $. We set $\cB_i = \cB \cap Q_i$. 

Two paths $p$ and $q$ in $Q$ are parallel  if they have same source and target.
If $X$ and $Y$ are sets of paths in $Q$, define 
$$X||Y = \{ (p, q) \in X \times Y | \mbox{$p$ and $q$ are
parallel} \}.$$
The vector spaces $K(X||Y )$ and $\mathrm{Hom}_{E^e} (KX, KY )$ are
isomorphic and we freely  denote by $\alpha || \beta$ an element in
$K(X||Y )$  as well as the morphism in  $\mathrm{Hom}_{E^e}(KX,KY)$
sending  $\alpha $ to $\beta$ and any other basis element to zero.

Next we recall a construction of the Lie bracket on  $\mathrm{HH}^1(A)$ which can be induced from the results  in \cite{CS}.

 Since we are interested in $\mathrm{HH}^1(A)$ and not in the higher Hochschild cohomology spaces, once the basis $\mathcal{B}$ is fixed, 
we can work -for cohomological purposes- with a set of relations $\mathcal{R}$ 
generating the ideal $I$, which does not need to be minimal.

The following is the start of a projective $A$-bimodule resolution of $A$:
\[ \dots \longrightarrow A \otimes_E K\mathcal{R} \otimes_E A \stackrel{{d_1}}{\longrightarrow} A \otimes_E KQ_1 \otimes_E A \stackrel{{d_0}}{\longrightarrow} A \otimes_E A \longrightarrow 0, \]
with differentials
$d_0(1\otimes v \otimes 1)= v \otimes 1 - 1\otimes v$ and, $d_1(1\otimes r \otimes 1)=  
\sum_i\lambda_i\sum_{l=1}^{n_i}v_{j_{1}}\dots v_{j_{l-1}}\otimes v_{j_{l}}\otimes v_{j_{l+1}}\dots v_{j_{n_i}}$
 where $r=\sum_i\lambda_ia_i \in \mathcal{R}$, with
$a_i= v_{j_1}\dots v_{j_{n_i}}$  and $v_{j_k} \in Q_1$.

If we apply  the functor
  $\mathrm{Hom}_{A \otimes A^{op}}(-,A)$ to this chain complex and use the standard natural isomorphism $ \mathrm{Hom}_{A \otimes A^{op}}(A \otimes_E - \otimes_E A,A)  \cong \mathrm{Hom}_{E \otimes E^{op}}( - ,A)$, we obtain  the following 
cochain complex  with the induced differentials:

 \[ 0  \longrightarrow  \mathrm{Hom}_{E\otimes E^{op}}(E, A) \stackrel{\delta^0}{\longrightarrow} \mathrm{Hom}_{E\otimes E^{op}}(KQ_1, A) \stackrel{\delta^1}{\longrightarrow}\mathrm{Hom}_{E\otimes E^{op}}(K\cR, A)  \longrightarrow \dots, \]
 which can also be written as:
 { \[ 0  \longrightarrow  K(Q_0 || \mathcal{B}) \stackrel{\delta^0}{\longrightarrow} K(Q_1 || \mathcal{B}) \stackrel{\delta^1}{\longrightarrow} K(\mathcal {R}|| \mathcal{B}) \longrightarrow \dots \]}
where $\delta^0(e||p) = \sum_{\alpha \in Q_1} \alpha || (\alpha p - p \alpha)  $  and $\delta^1 (\alpha || p) = \sum_{r \in \mathcal{R}, \alpha | r} r || r^{(\alpha, p)}$. In this last expression, if $ r = \sum \lambda_q q$ then $r || r^{(\alpha, p)} = \sum \lambda_q r || q^{(\alpha, p)}$  where 
$r || q^{(\alpha, p)}$   is induced by 
the sum of all the nonzero paths obtained by subsequently replacing each appearance of 
$\alpha$ in $q$ by $p$. If $q$ does not contain the arrow $\alpha$ or if  when replacing
$\alpha$ in $q$ by $p$ we obtain a zero element of $A$ then  we set $q^{(\alpha, p)}= 0$. 

The  Gerstenhaber  structure  defined on the cochains of the above complex and inducing the Gerstenhaber bracket in Hochschild cohomology 
is computed  for example via 
comparison morphisms between a resolution whose first terms are described above and the $E$-reduced 
bar resolution introduced in \cite{Cib}.
  Using the comparison maps between resolutions as in \cite{ALS} - and taking into account that even if in that article the authors only deal with 
Toupie Algebras, the formulas for the comparison morphisms in low degrees do not change -  we have that the
 Lie bracket  on  $\ker(\delta_1) \subset K(Q_1 || \mathcal{B})$ is induced by linearly extending  the following 
\begin{equation}
\label{bracket}
[\alpha || h, \beta || b] = \beta || b^{(\alpha, h)} - \alpha|| h ^{(\beta, b)},
\end{equation}
with $\alpha || h$, $\beta||b$  $\in K(Q_1 || \mathcal{B})$.  This bracket induces a Lie algebra structure on $\mathrm{HH}^1(A)$.


\subsection{Lie algebras of graded algebras}
\label{DefGradedLie}

Let  now $A= KQ/I$ with $I$ an admissible ideal generated by homogeneous  relations, so that the length of paths gives $A$ the structure of a graded algebra 
with arrows in degree one,  and the elements of the basis $\mathcal{B}$ are homogeneous.  This induces a grading on the Lie algebra 
$ \ker (\delta^1)= K(Q_1 || \mathcal{B})\cap \ker \delta^1$ such that the elements in $K(Q_1 || \mathcal{B}_i)\cap \ker \delta^1$ are of degree $i-1$ for all $i \in \mathbb{N}$.
  Moreover, $\mathrm{Im}\delta^0$ is a graded ideal of $ \ker \delta^1$. 
We thus obtain an induced  grading on $\mathrm{HH}^1(A)$.  Set
\begin{equation*}
\begin{split}
\cL_0 &:= K(Q_1|| Q_0) \cap \ker \delta^1\\
\cL_1 &:= K(Q_1 || Q_1) \cap  \ker \delta^1/ \cI_1\\
\cL_i &:= K(Q_1 || \cB_i) \cap  \ker \delta^1/ \cI_i
\end{split}
\end{equation*}
where $$\cI_1=\langle\sum _{a \in Q_1e} a || a - \sum_{a \in eQ_1} a || a \mid e \in Q_0 \rangle$$ and 
for all $i> 1$
 $$\cI_i=\langle\sum _{a \in  eQ_1, \gamma a\in \mathcal{B}} a || \gamma a - \sum_{a \in  Q_1e, 
 a\gamma \in \mathcal{B}} a || a\gamma \mid e || \gamma \in Q_0|| Q_i\rangle$$ 
  
With the above notation  $\cL = \oplus_{i\geq0}\cL_i$
 and $[\cL_i, \cL_j ] \subset \cL_{i+j-1}$ for all
$i, j \geq 0$, where the bracket is induced by the brackets in Equation (\ref{bracket}). Set $\cN:= \oplus_{i \geq 2}\cL_i$. Then  $\cN$ is a 
nilpotent Lie algebra. Moreover, $\mathrm{HH}^1(A) = \cL$.

\section{Criteria for solvability of $\mathrm{HH}^1$ for  finite dimensional algebras}
\label{conditions}

In this section we prove several criteria that if satisfied, imply the solvability of the Lie algebra given by the first Hochschild cohomology of a finite dimensional $K$-algebra.

\subsection{Graded algebras without self-extensions of simples}

\begin{Theorem}
\label{Liestronglysolvable}
Let $K$ be an algebraically closed field  and let $A=KQ/I$ be a  finite
dimensional graded $K$-algebra. Suppose that $\mathrm{Ext}^1_A(S,S)=\{0\}$ for every simple 
$A$-module $S$ and that  $\mathrm{dim}_K(\mathrm{Ext}^1_A(S,T))\leq1$ 
for all nonisomorphic simple $A$-modules $S$ and $ T$. Then $\mathrm{HH}^1 (A)$ is a strongly solvable Lie algebra. 
 In particular, $\mathrm{HH}^1 (A)$ is a solvable Lie algebra.
\end{Theorem}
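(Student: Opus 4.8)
The plan is to exploit the graded Lie algebra structure on $\HH^1(A)=\cL=\bigoplus_{i\geq 0}\cL_i$ introduced in Subsection~\ref{DefGradedLie}, reading off from the two hypotheses precise combinatorial restrictions on the arrows of $Q$. The vanishing $\mathrm{Ext}^1_A(S,S)=0$ says that $Q$ has no loops, while $\dim_K\mathrm{Ext}^1_A(S,T)\leq 1$ for $S\not\cong T$ says that there is at most one arrow between any two distinct vertices. Together these mean that no two distinct arrows of $Q$ are parallel and that no arrow is parallel to a vertex. First I would record the consequences for the two lowest graded pieces. Since an element of $K(Q_1\|Q_0)$ is an arrow parallel to a vertex, i.e. a loop, the absence of loops forces $\cL_0=0$. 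And since no two distinct arrows are parallel, the space $K(Q_1\|Q_1)$ is spanned by the diagonal elements $\alpha\|\alpha$ with $\alpha\in Q_1$, so every representative of a class in $\cL_1$ is a linear combination of such elements.

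The heart of the argument is to show that $\cL_1$ is abelian, and for this I would evaluate the bracket (\ref{bracket}) on these representatives. For arrows $\alpha,\beta$, the path $\beta^{(\alpha,\alpha)}$ is obtained by substituting $\alpha$ for each occurrence of $\alpha$ in the single arrow $\beta$; this equals $\beta$ when $\alpha=\beta$ and $0$ otherwise, and symmetrically for $\alpha^{(\beta,\beta)}$. Hence
\[
[\alpha\|\alpha,\beta\|\beta]=\beta\|\beta^{(\alpha,\alpha)}-\alpha\|\alpha^{(\beta,\beta)}
\]
vanishes in every case: the terms cancel when $\alpha=\beta$, and both terms are $0$ when $\alpha\neq\beta$. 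Since the bracket on $\HH^1(A)$ is induced by this formula on representatives, it follows that $[\cL_1,\cL_1]=0$.

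Finally I would assemble these facts using the grading rule $[\cL_i,\cL_j]\subseteq\cL_{i+j-1}$ and the fact, recalled in Subsection~\ref{DefGradedLie}, that $\cN=\bigoplus_{i\geq 2}\cL_i$ is a nilpotent Lie algebra. Because $\cL_0=0$ we have $\cL=\cL_1\oplus\cN$. The bracket $[\cL_1,\cL_1]\subseteq\cL_1$ vanishes by the previous step, whereas for every pair $(i,j)$ with $i,j\geq 1$ and $(i,j)\neq(1,1)$ one has $i+j-1\geq 2$, so $[\cL_i,\cL_j]\subseteq\cL_{i+j-1}\subseteq\cN$. Therefore $\cL^{(1)}=[\cL,\cL]\subseteq\cN$. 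As a subalgebra of the nilpotent Lie algebra $\cN$, the derived algebra $\cL^{(1)}$ is itself nilpotent, so $\cL=\HH^1(A)$ is strongly solvable, and in particular solvable by the remark following the definition of strong solvability.

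I do not expect a serious obstacle, as the argument is essentially bookkeeping with the grading once $\cL_1$ is known to be abelian. The one point demanding care is the degree-one bracket computation: one must check that the substitution operation genuinely annihilates all cross terms between distinct arrows. This is precisely where the no-parallel-arrows consequence of the hypotheses is indispensable, since the presence of parallel arrows would produce nonzero off-diagonal elements $\alpha\|\beta$ with $\alpha\neq\beta$, on which the bracket need not vanish and $\cL_1$ could fail to be abelian.
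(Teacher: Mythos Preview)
Your proposal is correct and follows essentially the same route as the paper: translate the hypotheses into the absence of loops and of parallel arrows, deduce $\cL_0=0$ and that $\cL_1$ is abelian via the bracket formula, and conclude $\cL^{(1)}\subseteq\cN$ is nilpotent. The only cosmetic difference is that you invoke the nilpotency of $\cN$ as a black box from Subsection~\ref{DefGradedLie}, whereas the paper reproves it in situ by iterating the grading inequality $[\cL_i,\cL_j]\subseteq\cL_{i+j-1}$; both are fine.
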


\begin{proof} 
If $\mathrm{Ext}^1_A(S,S)=\{0\}$ for every simple $A$-module $S$, then $Q$ has no loops. 
Therefore $K(Q_1||\mathcal{B}_0)=\{0\}$. Since $\mathrm{dim}_K(\mathrm{Ext}
^1_A(S,T))\leq1$ 
for every simple $A$-modules $S$ and $T$, there are no parallel arrows in $Q$. 
Equation~(\ref{bracket}) then immediately gives that  $\cL_1$ is an abelian Lie algebra. 
Consequently the derived subalgebra $\cL^{(1)}$ of  $\cL$ is contained in $\cN$. 
Furthermore, $[\cL_i, \cL_j] \subset \cL_{i+j-1}$. As a consequence, $[\cL^{(1)}, \cL^{(1)}] \subset \oplus_{i \geq 3} \cL_i$. By iterating this process, since $A$ is finite dimensional, it follows that $\cL^{(1)}$ is nilpotent and thus $\cL$ is strongly solvable; 
as a consequence $HH^1(A)$ is strongly solvable and hence solvable. 
\end{proof}


\subsection{Graded algebras with self-extensions of simples}
\label{Selfext}
Let us now consider the case of a graded algebra $A$ whose Ext-quiver has loops, that is there is a simple $A$-module $S$ such that 
$\mathrm{Ext}^1_A(S,S) \neq \{0\}$.

For $i \in {\mathbb{N}}$, define the set 
\begin{equation*}
\label{Sigma}
\Sigma_i  (A) := \{ \alpha  || \beta \in Q_1 || \mathcal{B}_i \mid  \alpha || \beta = \alpha_j || \beta_j \mbox{ for some $x =\sum \lambda_j \alpha_j || \beta_j \in  
\ker(\delta^1)$ with $\lambda_j \in K^*$} \}.\end{equation*}

 If it is clear from the context, we will just write $\Sigma_i$ without specifying the algebra.

 One can think of $\Sigma_i  (A)$ as follows: if $\alpha  || \beta \in Q_1 || \mathcal{B}_i$ belongs to this set, this means that there is a 
$K$-linear derivation of $A$ sending $\alpha$ to a linear combination of parallel paths such that $\beta$ appears in this linear combination.

We now state our first result on algebras with self-extensions of simples.

\begin{Theorem}
\label{trivialclasses}
Let $A = KQ/I$ be a finite dimensional graded $K$-algebra such that \sloppy
$\rm{dim_K Ext}^1_A(S,S) \leq1$, for all simple $A$-modules $S$. Suppose further that if $\alpha_i || \alpha_j \in \Sigma_1$, for $i \neq j$, then $\alpha_j || \alpha_i \notin \Sigma_1$  and that  $\Sigma_0$ is empty. Then $\mathrm{HH}^1(A)$ is a solvable Lie algebra. 
\end{Theorem}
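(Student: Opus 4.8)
The plan is to run everything through the grading of Section~\ref{DefGradedLie}, where $\cL=\mathrm{HH}^1(A)=\bigoplus_{i\ge 0}\cL_i$ satisfies $[\cL_i,\cL_j]\subseteq\cL_{i+j-1}$ and $\cN=\bigoplus_{i\ge 2}\cL_i$ is a nilpotent ideal. First I would use $\Sigma_0=\emptyset$ to kill the bottom piece: a nonzero element of $\cL_0=K(Q_1||Q_0)\cap\ker\delta^1$ would, term by term, exhibit a pair $\alpha||e\in\Sigma_0$, so $\cL_0=0$. Hence $\cL=\cL_1\oplus\cN$; since $[\cL_1,\cL_i]\subseteq\cL_i$ for $i\ge 2$, $\cN$ is an ideal and $\cL/\cN\cong\cL_1$ as Lie algebras. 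As an extension of $\cL_1$ by the nilpotent (hence solvable) ideal $\cN$, the algebra $\cL$ is solvable as soon as $\cL_1$ is, so the whole problem collapses to the degree-zero part.

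Next I would make $\cL_1$ explicit. Writing $G:=K(Q_1||Q_1)\cap\ker\delta^1$ we have $\cL_1=G/\cI_1$, a quotient of $G$, so it suffices to prove $G$ solvable. Only parallel arrows pair, so $K(Q_1||Q_1)$ splits as the direct sum over ordered vertex pairs $(v,w)$ of the spaces $K(A_{vw}||A_{vw})$, where $A_{vw}$ is the set of arrows $v\to w$. Specialising Equation~\eqref{bracket} to arrows, $[\alpha||\gamma,\beta||\delta]$ equals $\beta||\gamma$ when $\delta=\alpha$, equals $-\alpha||\delta$ when $\gamma=\beta$, and vanishes otherwise; writing $E_{\gamma\alpha}$ for the matrix unit sending $\alpha$ to $\gamma$, the assignment $\alpha||\gamma\mapsto E_{\gamma\alpha}$ identifies this bracket with the commutator of matrix units. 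Thus $K(Q_1||Q_1)\cong\bigoplus_{v,w}\mathfrak{gl}(KA_{vw})$ with the blocks commuting, and $G$ embeds in the product of its block projections $\pi_{vw}(G)$; therefore $G$ is solvable iff each $\pi_{vw}(G)$ is. For a loop block $(v=w)$ the hypothesis $\dim_K\mathrm{Ext}^1_A(S,S)\le 1$ gives $\dim\mathfrak{gl}(KA_{vv})\le 1$, which is abelian, so everything reduces to the non-loop blocks.

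Each non-loop projection $H:=\pi_{vw}(G)\subseteq\mathfrak{gl}_n$, with $n=|A_{vw}|$, has support $P=\{(i,j):X_{ij}\ne 0 \text{ for some } X\in H\}$ equal to the image of $\Sigma_1$ in this block, and the hypothesis on $\Sigma_1$ says exactly that $P$ is off-diagonally antisymmetric: $(i,j)\in P$ and $i\ne j$ force $(j,i)\notin P$. The heart of the argument is then the linear-algebra statement that such an $H$ is solvable. For the clean part, antisymmetry collapses the trace form to the diagonal, $\mathrm{tr}(XY)=\sum_i X_{ii}Y_{ii}$ for $X,Y\in H$, and computing the diagonal of a commutator (the only surviving terms being $X_{ii}Y_{ii}-Y_{ii}X_{ii}=0$) shows $[H,H]$ has zero diagonal. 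Hence $\mathrm{tr}\bigl([H,H]\,H\bigr)=0$, and in characteristic zero Cartan's solvability criterion immediately gives $H$ solvable; equivalently, the diagonal map $X\mapsto(X_{11},\dots,X_{nn})$ is a homomorphism onto an abelian algebra with kernel $H^0=H\cap\{\text{zero diagonal}\}$, reducing solvability of $H$ to that of $H^0$.

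The main obstacle is this last step in positive characteristic, where Cartan's criterion is unavailable and $H$ need not even be triangularisable: already $N=E_{12}+E_{23}+E_{31}$ together with a suitable diagonal element spans, in characteristic $3$, a non-triangularisable yet solvable algebra, so one cannot argue by exhibiting a common eigenvector. I would instead pass to the support digraph $D$ on $\{1,\dots,n\}$ and reduce to the strongly connected case: the condensation of $D$ is a DAG, giving $H$ a block-upper-triangular shape in which $H$ is solvable iff each strongly connected diagonal block is. For a strongly connected $D$ one then exploits closure under bracket directly: every commutator entry in a position outside $P$ must vanish, and working this out along the cycles of $D$ forces rank-one type relations among the off-diagonal coefficients of elements of $H$ (as in the three-cycle computation, where these relations pin the off-diagonal part down to a single line), which forces $[H,H]$ to be a nilpotent Lie algebra and yields solvability of $H$ in every characteristic. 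Establishing these cancellation constraints uniformly, for an arbitrary strongly connected support digraph rather than a single chordless cycle, is the delicate technical point I expect to consume the bulk of the work.
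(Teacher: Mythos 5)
Your reduction is sound and agrees with the paper's setup: $\Sigma_0=\emptyset$ forces $\cL_0=0$, the grading gives $\cL^{(k)}\subseteq\cL_1^{(k)}\oplus\cN$ with $\cN$ nilpotent, so the theorem is equivalent to the solvability of $\cL_1$; and your observation that the antisymmetry of the off-diagonal support kills the diagonal of every commutator is exactly the mechanism the paper's (very short) proof relies on. The difference is that the paper asserts outright that the hypotheses make $\cL_1^{(1)}$ abelian, so that $\cL^{(2)}\subseteq\cN$ --- an argument that never mentions the characteristic --- whereas you convert the same diagonal-vanishing observation into $\mathrm{tr}\bigl([H,H]\,H\bigr)=0$ and invoke Cartan's criterion, which is only available in characteristic zero.

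That is the genuine gap: the theorem is stated over an algebraically closed field of arbitrary characteristic (and is applied in Section~4 precisely in positive characteristic), but your proposal proves it only in characteristic $0$. The entire positive-characteristic case rests on the unproved assertion that, for a strongly connected support digraph, the closure of $H$ under bracket ``forces rank-one type relations \dots which forces $[H,H]$ to be a nilpotent Lie algebra''; you have verified this only for a single chordless $3$-cycle, and you acknowledge that the general case is where the work lies. Note also that the support combinatorics alone cannot deliver what you want: the pattern $\{(1,2),(2,3),(1,3)\}$ is off-diagonally antisymmetric yet carried by the Borel subalgebra $\mathfrak{b}_3$, whose derived algebra is the non-abelian Heisenberg algebra, so any argument must genuinely use which subalgebra of $\mathfrak{gl}(KA_{vw})$ the kernel of $\delta^1$ cuts out, not merely where it is supported. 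To close the gap you should either prove the characteristic-free claim the paper makes (that $\cL_1^{(1)}$ is abelian, equivalently $\cL_1^{(2)}=0$), or supply a complete solvability argument for $\cL_1$ that avoids the trace form; as written, the proposal does not establish the statement in characteristic $p$.
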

\begin{proof}
It immediately follows from the hypotheses that  the Lie algebra $\cL_1^{(1)}$ is abelian.  Since we also have that  $\Sigma_0 = \emptyset$, we obtain 
that  every element in the derived Lie algebra  $\cL^{(2)}$ of $\cL^{(1)}$ has as summands elements in $\Sigma_i$, for $i\geq 2$ and thus $\cL^{(2)} \subset \cN$. Since $\cN$ is nilpotent, it follows that  $\cL^{(2)}$ and hence $\cL^{(1)}$ is nilpotent. Thus $\mathrm{HH}^1(A)$ is solvable.
\end{proof}

\begin{Corollary}
\label{loops}
Let $A=KQ/I$ be a finite dimensional graded $K$-algebra such that $\rm{dim_K Ext}^1_A(S,S) \leq1$, for all simple $A$-modules $S$. Suppose further that if 
$\alpha_i || \alpha_j \in \Sigma_1$, for $i \neq j$, then $\alpha_j || \alpha_i \notin \Sigma_1$ and  that     $\alpha || \alpha^2 $ is not in $\Sigma_2$  
for any loop $\alpha$ in $\cB$.   
 Then $\mathrm{HH}^1(A)$ is a solvable Lie algebra. 
\end{Corollary}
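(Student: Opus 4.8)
The plan is to work throughout with the length grading of $\cL=\HH^1(A)=\cL_0\oplus\cL_1\oplus\cN$ from Subsection~\ref{DefGradedLie}, for which $[\cL_i,\cL_j]\subseteq\cL_{i+j-1}$ and $\cN=\oplus_{i\ge 2}\cL_i$ is nilpotent. In the shifted convention (where $K(Q_1||\cB_i)\cap\ker\delta^1$ has degree $i-1$) this is a genuine $\mathbb{Z}$-grading bounded below by $-1$: the degree $-1$ piece is $\cL_0$, spanned by the classes $\alpha||e$ with $\alpha$ a loop at $e$, which by Equation~(\ref{bracket}) act via $\mathrm{ad}$ as \emph{degree-lowering} operators (they delete one occurrence of $\alpha$); the degree $0$ piece is $\cL_1$; and the strictly positive part is $\cN$. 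Since $\dim_K\mathrm{Ext}^1_A(S,S)\le 1$, each vertex $e$ carries at most one loop $\alpha_e$. As a first step I would record, exactly as in the proof of Theorem~\ref{trivialclasses}, that the hypothesis on $\Sigma_1$ forces $\cL_1^{(1)}$ to be abelian.

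The conceptual point is that the only obstruction to solvability in this situation is a ``lowering--grading--raising'' triple $\langle\,\alpha||e,\ \alpha||\alpha,\ \alpha||\alpha^2\,\rangle$ attached to a loop $\alpha$, which (in suitable characteristic) spans a copy of $\mathfrak{sl}_2$: a direct computation with Equation~(\ref{bracket}) gives $[\alpha||e,\alpha||\alpha]=\alpha||e$, $[\alpha||\alpha,\alpha||\alpha^2]=\alpha||\alpha^2$ and $[\alpha||e,\alpha||\alpha^2]=2\,\alpha||\alpha$. This is precisely the Jacobson--Witt phenomenon responsible for the excluded algebras $K[X]/(X^r)$, where $\alpha||\alpha^2\in\Sigma_2$. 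The new hypothesis removes the top generator of this triple, and I would make this quantitative as follows: for $\alpha_e||e\in\cL_0$ and a homogeneous $y=\sum_j\lambda_j\,\beta_j||b_j\in\cL_2\cap\ker\delta^1$, Equation~(\ref{bracket}) yields $[\alpha_e||e,y]=\sum_j\lambda_j\,\beta_j||b_j^{(\alpha_e,e)}$, where $b_j^{(\alpha_e,e)}$ deletes one $\alpha_e$ from the length-two path $b_j$. A grading term $\alpha_e||\alpha_e$ can arise only from a summand $\beta_j||b_j=\alpha_e||\alpha_e^2$, which does not lie in $\Sigma_2$ by hypothesis; hence $[\cL_0,\cL_2]$ contains no counting element $\alpha||\alpha$.

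Finally I would assemble these facts into solvability. With $\cL_1^{(1)}$ abelian, $[\cL_0,\cL_2]$ free of counting terms, and $\cN$ nilpotent, the goal is to run the derived-series argument of Theorem~\ref{trivialclasses} while now also tracking the degree $-1$ and degree $0$ components (the only homogeneous parts not automatically inside $\cN$), and to show that after finitely many steps the derived series lands in $\cN$, which is solvable. The hard part will be exactly the degree-lowering bracket $[\cL_0,\cL_2]\subseteq\cL_1$: it keeps $\cL_{\ge 1}=\cL_1\oplus\cN$ from being an ideal of $\cL$ and so blocks any one-line quotient argument; the substance of the proof is that, by the vanishing of counting terms established above, this lowering can never regenerate the grading elements $\alpha||\alpha$ and hence can never rebuild a non-solvable $\mathfrak{sl}_2$. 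Where the bookkeeping gets delicate I would split along the dichotomy $\Sigma_0=\emptyset$ versus $\Sigma_0\neq\emptyset$: in the former case Theorem~\ref{trivialclasses} applies verbatim, while in the latter the condition $\alpha||\alpha^2\notin\Sigma_2$ is precisely what excludes the surviving higher raising generators (contrast $K[X]/(X^r)$, where $\alpha||\alpha^2\in\Sigma_2$ and $\HH^1$ is a Jacobson--Witt algebra).
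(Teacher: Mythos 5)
Your proposal is correct and takes essentially the same route as the paper: split on whether $\cL_0$ vanishes (if $\Sigma_0=\emptyset$, Theorem~\ref{trivialclasses} applies directly), use $\alpha||\alpha^2\notin\Sigma_2$ together with the $\Sigma_1$ hypothesis to conclude that no element $\alpha||\alpha$ survives into $\cL^{(1)}$, and then note that $\alpha||e$ can only be regenerated as $[\alpha||e,\alpha||\alpha]$, so $\cL^{(2)}$ has no $\cL_0$-component, satisfies the hypotheses of Theorem~\ref{trivialclasses}, and is therefore solvable. The paper's proof is exactly the two-step derived-series argument you sketch in your final paragraph; the only remaining task is to state that closing step explicitly rather than as a plan.
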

\begin{proof}
Let $\mathcal{L} $ be as above. 
If  $\mathcal{L}_{0} = 0$, the statement follows from Theorem~\ref{trivialclasses}. So assume now that $\cL_{0} \neq 0$. Then  $\alpha || \alpha$ is not in  $\mathcal{L}^{(1)}$ since the only way to obtain it, is as $[\alpha || e, \alpha || \alpha^2]$ for some loop $\alpha$ but by hypothesis such an $\alpha$ does not exist. Then $\mathcal{L}^{(2)}$ does not contain $\alpha || e$ since the only way to obtain it is as $[\alpha || e, \alpha ||\alpha ]$ and $\alpha || \alpha$ is not in $\cL^{(1)}$. Thus the Lie algebra $\mathcal{L}^{(2)}$ satisfies the hypotheses of Theorem \ref{trivialclasses} and by the proof of the same proposition it follows that $\mathcal{L}^{(2)}$ is solvable and therefore $\mathcal {L}$ and hence $\HH^1(A)$ are solvable.
\end{proof}

The following is a consequence of the proof of Corollary~\ref{loops} in characteristic 2.

\begin{Corollary}
\label{loops2}
Let $A=KQ/I$ be a finite dimensional graded $K$-algebra over an algebraically closed field of characteristic $2$ such that 
$\rm{dim_K Ext}^1_A(S,S) \leq1$, for all simple $A$-modules $S$. Suppose further that if $\alpha_i || \alpha_j \in \Sigma_1$, for $i \neq j$, then $\alpha_j || \alpha_i \notin \Sigma_1$.  Then $\mathrm{HH}^1(A)$  is a solvable Lie algebra. 
\end{Corollary}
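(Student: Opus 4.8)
The plan is to re-run the argument of Corollary~\ref{loops}, observing that in characteristic $2$ the single extra hypothesis of that corollary — that $\alpha || \alpha^2 \notin \Sigma_2$ for every loop $\alpha$ — becomes superfluous. As there, if $\cL_0 = 0$ then $\Sigma_0 = \emptyset$ and the conclusion is immediate from Theorem~\ref{trivialclasses}, so I would assume $\cL_0 \neq 0$ and work throughout with the grading $\cL = \oplus_{i \geq 0}\cL_i$ and the rule $[\cL_i, \cL_j] \subset \cL_{i+j-1}$ recorded in Section~\ref{DefGradedLie}.

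The heart of the matter is to show that $\alpha || \alpha \notin \cL^{(1)}$ for every loop $\alpha$. Since $\alpha || \alpha \in \cL_1$, any bracket contributing it must lie in $[\cL_0, \cL_2]$ or in $[\cL_1, \cL_1]$. The hypothesis on $\Sigma_1$ handles the second type exactly as in the proof of Theorem~\ref{trivialclasses}: a diagonal summand $\alpha || \alpha$ can only arise from a bracket of the form $[\alpha_i || \alpha_j, \alpha_j || \alpha_i]$, which is excluded for $i \neq j$ by the assumption and is zero for $i = j$. For the first type I would compute directly from Equation~(\ref{bracket}). Using $\mathrm{dim}_K \mathrm{Ext}^1_A(S,S) \leq 1$, so that there is at most one loop at each vertex, the only length-two path $b$ parallel to a loop $\alpha$ for which $b^{(\alpha,e)}$ contains $\alpha$ is $b = \alpha^2$, and then, since $e^{(\alpha,\alpha^2)} = 0$,
\begin{equation*}
[\alpha || e, \alpha || \alpha^2] = \alpha || (\alpha^2)^{(\alpha, e)} = \alpha || 2\alpha = 2\,(\alpha || \alpha).
\end{equation*}
In characteristic $2$ this bracket vanishes, so $[\cL_0, \cL_2]$ produces no diagonal loop term either. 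This is precisely where the hypothesis $\alpha || \alpha^2 \notin \Sigma_2$ of Corollary~\ref{loops} can be dropped: there one excludes $\alpha || \alpha^2$ from $\cL$ altogether, whereas here the coefficient $2$ annihilates its contribution.

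With $\alpha || \alpha \notin \cL^{(1)}$ in hand, the argument closes as in Corollary~\ref{loops}. An element $\alpha || e \in \cL_0$ can only be obtained from a bracket in $[\cL_0, \cL_1]$, and — again using that $\alpha$ is the unique loop at its vertex — the only such bracket is $[\alpha || e, \alpha || \alpha] = \alpha || e$; since $\alpha || \alpha \notin \cL^{(1)}$, this forces $\alpha || e \notin \cL^{(2)}$. Hence $\cL^{(2)}$ has empty $\Sigma_0$ and meets the hypotheses of Theorem~\ref{trivialclasses}, so $\cL^{(2)}$ is solvable and therefore so are $\cL$ and $\mathrm{HH}^1(A)$.

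I expect the only delicate point to be the bookkeeping in the bracket computation: one must be sure that $b = \alpha^2$ is genuinely the sole length-two contributor, i.e.\ that no other parallel path $b \in \cB_2$ yields a diagonal term under $b^{(\alpha,e)}$, which is exactly where the bound on self-extensions of simples enters. Everything else reduces to the grading estimate $[\cL_i, \cL_j] \subset \cL_{i+j-1}$ and the nilpotence of $\cN$ established in Section~\ref{DefGradedLie}.
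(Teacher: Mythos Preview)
Your proposal is correct and follows essentially the same route as the paper: the paper's proof consists of the single observation that $[\alpha\,||\,e,\ \alpha\,||\,\alpha^2]=0$ in characteristic~$2$, after which one applies verbatim the argument of Corollary~\ref{loops}. Your write-up is in fact more careful than the paper's, since you make explicit why the $[\cL_1,\cL_1]$ contribution cannot produce a diagonal term $\alpha\,||\,\alpha$ (using the $\Sigma_1$ hypothesis) and why $b=\alpha^2$ is the only length-two path that matters in the $[\cL_0,\cL_2]$ computation (using the bound on self-extensions).
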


\begin{proof}
Since the characteristic of $K$ is $2$, we have for any loop $\alpha$ at vertex $e$ that  
$[\alpha || e, \alpha || \alpha^2]=0$. Therefore we can apply the same arguments as in Corollary~\ref{loops} and the result follows. 
\end{proof}

\subsection{Graded Local algebras}

We finish this section by considering the case when $A$ is a graded local algebra.

\begin{Proposition} \label{localB0}
Let $A=KQ/I$ be a local finite dimensional graded $K$-algebra such that $\Sigma_0 = \emptyset$. 
Suppose $Q$ has  loops   $\alpha_1, \alpha_2, \ldots, \alpha_n$. 
If  none of the $\alpha_i || \alpha_j$, are in  $\Sigma_1$ for $i \neq j$ and $1 \leq i, j \leq n$, then $\mathrm{HH}^1(A)$ is solvable.
\end{Proposition}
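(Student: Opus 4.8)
The plan is to show that the two hypotheses force $\cL_0 = 0$ and the degree-one piece $\cL_1$ to be abelian, after which the grading argument used in the proof of Theorem~\ref{Liestronglysolvable} yields solvability. Since $A$ is local, its Ext-quiver has a single vertex $e$ and every arrow $\alpha_1, \dots, \alpha_n$ is a loop at $e$. In particular $Q_1 e = eQ_1 = Q_1$, so the defining generator of $\cI_1$ is $\sum_{a\in Q_1}a||a - \sum_{a\in Q_1}a||a = 0$; hence $\cI_1 = 0$ and $\cL_1 = K(Q_1||Q_1)\cap\ker\delta^1$ is literally the space of degree-one cocycles, spanned by the basis elements $\alpha_i||\alpha_j$, $1\le i,j\le n$.

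First I would record that $\Sigma_0 = \emptyset$ gives $\cL_0 = 0$: any nonzero element of $\cL_0 = K(Q_1||Q_0)\cap\ker\delta^1$ is a cocycle $\sum_j\lambda_j\,\alpha_j||e$ with some $\lambda_j\neq 0$, and then $\alpha_j||e\in\Sigma_0$, contradicting the hypothesis. Next I would use the assumption that $\alpha_i||\alpha_j\notin\Sigma_1$ for all $i\ne j$. By the definition of $\Sigma_1$, if a cocycle $x = \sum_{i,j}\lambda_{ij}\,\alpha_i||\alpha_j\in\ker\delta^1$ had a nonzero coefficient $\lambda_{ij}$ with $i\ne j$, then $\alpha_i||\alpha_j$ would lie in $\Sigma_1$; so every cocycle in $\cL_1$ is a diagonal combination $\sum_i\lambda_i\,\alpha_i||\alpha_i$.

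The key step is then to compute, via Equation~(\ref{bracket}), that these diagonal cochains commute. For $i\ne j$ the arrow $\alpha_j$ does not contain $\alpha_i$, so $\alpha_j^{(\alpha_i,\alpha_i)} = 0$, and likewise $\alpha_i^{(\alpha_j,\alpha_j)} = 0$; for $i=j$ the two terms cancel. In either case $[\alpha_i||\alpha_i,\alpha_j||\alpha_j] = \alpha_j||\alpha_j^{(\alpha_i,\alpha_i)} - \alpha_i||\alpha_i^{(\alpha_j,\alpha_j)} = 0$, so $\cL_1$ is abelian. (Concretely, under the identification $\alpha_i||\alpha_j\leftrightarrow E_{ji}$ the full space $K(Q_1||Q_1)$ is $\mathfrak{gl}_n$, and $\cL_1$ sits inside its diagonal Cartan subalgebra.)

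Finally I would conclude with the grading. Since $\cL_0 = 0$ and $[\cL_i,\cL_j]\subseteq\cL_{i+j-1}$, the only possible degree-one contribution to $\cL^{(1)} = [\cL,\cL]$ is $[\cL_1,\cL_1] = 0$, so $\cL^{(1)}\subseteq\cN = \oplus_{i\ge 2}\cL_i$. As $\cN$ is nilpotent and $\cL^{(1)}$ is a subalgebra of it, $\cL^{(1)}$ is nilpotent; hence $\cL = \HH^1(A)$ is strongly solvable, and in particular solvable. I expect the only genuinely delicate point to be the bookkeeping around $\Sigma_1$ and $\cI_1$, namely confirming $\cI_1 = 0$ in the local case so that the hypothesis on $\Sigma_1$ translates cleanly into diagonality of $\cL_1$; the bracket computation itself is routine.
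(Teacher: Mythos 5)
Your proposal is correct and follows essentially the same route as the paper: $\Sigma_0=\emptyset$ gives $\cL_0=0$, the hypothesis on $\Sigma_1$ forces $\cL_1$ to be abelian, and then the grading together with the nilpotency of $\cN$ finishes the argument. The extra details you supply (that $\cI_1=0$ in the local case and the explicit diagonal bracket computation) are correct and only make the paper's terser argument more transparent; your conclusion via $\cL^{(1)}\subseteq\cN$ even yields strong solvability, which is slightly more than the proposition claims.
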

\begin{proof}
Let $\cL$ be as above with $\cL_0 = 0$. 
Because of the hypotheses on $\Sigma_1$, we have that   $\mathcal{L}_1$ is abelian 
and consequently $\mathcal{L}_1$ is solvable. Furthermore, $\cL = \cL_1 \oplus \cN$ and since $[\cL_1, \cN] \subset \cN$ and $\cN$ is nilpotent,  the statement follows. 
\end{proof}

The next result is for the more general case that $\Sigma_0$ is not empty. 

\begin{Theorem}\label{thm:graded loops}
\label{local}
Let $A=KQ/I$ be a local finite dimensional graded $K$-algebra with two loops $\alpha_1$ and $\alpha_2$ such that $
\alpha_1 || \alpha_2$  and $\alpha_2 || \alpha_1$ are not in  $\Sigma_1$.  
\begin{enumerate}
\item If ${\rm char}(K) = 2$ then $\mathrm{HH}^1(A)$ is solvable.
\item If ${\rm char}(K) \neq 2$ and $\alpha_i || \alpha^2_i \notin \Sigma_2$ for $i\in \{1,2\}$ then $\mathrm{HH}^1(A)$ is solvable.
\end{enumerate}
\end{Theorem}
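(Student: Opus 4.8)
The plan is to exploit the length grading $\cL = \HH^1(A) = \bigoplus_{i \ge 0} \cL_i$ from Section~\ref{DefGradedLie}, where $[\cL_i, \cL_j] \subseteq \cL_{i+j-1}$ and $\cN = \bigoplus_{i \ge 2} \cL_i$ is nilpotent. As $A$ is local with the two loops $\alpha_1, \alpha_2$, we have $\cL_0 \subseteq K\{\alpha_1 || e, \alpha_2 || e\}$, and the hypothesis $\alpha_1 || \alpha_2, \alpha_2 || \alpha_1 \notin \Sigma_1$ forces $\cL_1 \subseteq K\{\alpha_1 || \alpha_1, \alpha_2 || \alpha_2\}$, so $\cL_1$ is abelian by Equation~(\ref{bracket}). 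Evaluating the bracket gives $[\alpha_i || e, \alpha_i || \alpha_i] = \alpha_i || e$ and $[\alpha_i || e, \alpha_j || \alpha_j] = [\alpha_i || e, \alpha_j || e] = 0$ for $i \ne j$, so $\cL_0 \oplus \cL_1$ is a metabelian subalgebra with derived algebra in $\cL_0$. The derived series is graded, and the first thing I would record is the reduction: if $\cL^{(k)}_0 = 0$ for some $k$ then, since the only brackets feeding back into $\cL_0$ or $\cL_1$ are $[\cL_0, \cL_1] \subseteq \cL_0$ and $[\cL_0, \cL_2] \subseteq \cL_1$, one gets $\cL^{(k+1)} \subseteq \cN$, which is nilpotent, hence $\cL$ is solvable. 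When $\cL_0 = 0$ this is immediate (and is Proposition~\ref{localB0}); so I assume $\cL_0 \ne 0$ and aim to annihilate the degree $-1$ part after finitely many derived steps.

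Tracking graded components yields $\cL^{(n+1)}_0 = [\cL^{(n)}_0, \cL^{(n)}_1]$ and $\cL^{(n+1)}_1 = \Pi_1[\cL^{(n)}_0, \cL^{(n)}_2]$, where $\Pi_1$ is projection to $\cL_1$. On $\cL_0$ the diagonal elements act by $\mathrm{ad}_{\alpha_1 || \alpha_1} = \mathrm{diag}(-1,0)$ and $\mathrm{ad}_{\alpha_2 || \alpha_2} = \mathrm{diag}(0,-1)$ in the basis $\alpha_1 || e, \alpha_2 || e$, so $\cL^{(n)}_0$ survives only while $\cL^{(n)}_1$ keeps supplying the diagonal generator fixing each surviving direction. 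Everything thus reduces to controlling which $\alpha_i || \alpha_i$ lie in $\Pi_1[\cL_0, \cL_2]$: such an element is produced either from a self term $[\alpha_i || e, \alpha_i || \alpha_i^2] = 2\,\alpha_i || \alpha_i$, or from cross terms $[\alpha_j || e, \alpha_i || \alpha_j \alpha_i]$, $[\alpha_j || e, \alpha_i || \alpha_i \alpha_j]$ (each equal to $\alpha_i || \alpha_i$) with $i \ne j$.

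The two mechanisms are as follows. The self term vanishes in characteristic $2$ (case (1)) and is excluded by $\alpha_i || \alpha_i^2 \notin \Sigma_2$ in case (2), so it never contributes. For the cross terms, writing $y = \sum y_{\gamma, p}\,\gamma || p$ and $S_i := y_{\alpha_i, \alpha_i \alpha_j} + y_{\alpha_i, \alpha_j \alpha_i}$ (with $\{i,j\} = \{1,2\}$), a direct computation shows that for a degree $-1$ cocycle $\phi = \mu\,\alpha_1||e + \nu\,\alpha_2||e$ the degree $0$ cocycle $[\phi, y]$ has $\alpha_1||\alpha_2$-coefficient $\mu S_1 + 2\nu\, y_{\alpha_1,\alpha_2^2}$ and $\alpha_1||\alpha_1$-coefficient $2\mu\, y_{\alpha_1,\alpha_1^2} + \nu S_1$, while the cross terms produce $\alpha_1||\alpha_1$ with coefficient a multiple of $S_1$. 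If a coordinate cocycle $\alpha_1 || e$ lies in $\cL_0$ (take $\mu=1, \nu=0$), then $\alpha_1||\alpha_2 \notin \Sigma_1$ forces $S_1 = 0$, and with the self term killed no diagonal element is produced. This settles every case in which $\cL_0$ contains a coordinate cocycle: if $\dim \cL_0 = 2$ then $\Pi_1[\cL_0,\cL_2] = 0$ and $\cL^{(2)}_0 = 0$; if $\cL_0 = K\{\alpha_1||e\}$ then only $\alpha_2||\alpha_2$ could survive and $[\alpha_1||e, \alpha_2||\alpha_2] = 0$ again gives $\cL^{(2)}_0 = 0$. In characteristic $2$ the term $2\nu\, y_{\alpha_1,\alpha_2^2}$ drops out of the off-diagonal coefficient, so bracketing with a single mixed $\phi_0$ already forces $S_1 = S_2 = 0$; this disposes of the last configuration and proves (1).

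The hard part is the residual configuration of case (2): $\mathrm{char}(K) \ne 2$ and $\cL_0 = K\{\phi_0\}$ with $\phi_0 = \mu_0\,\alpha_1||e + \nu_0\,\alpha_2||e$, $\mu_0\nu_0 \ne 0$, where no coordinate cocycle is available and only $\phi_0$ may be bracketed. Now the off-diagonal constraint $\mu_0 S_1 + 2\nu_0\, y_{\alpha_1,\alpha_2^2} = 0$ no longer forces $S_1 = 0$ but only ties it to $y_{\alpha_1,\alpha_2^2}$, so the $\alpha_1||\alpha_1$-coefficient $\nu_0 S_1$ need not vanish. One checks that here $\cL^{(n)}_1$ can only be $K\{\epsilon\}$ for the Euler element $\epsilon = \alpha_1||\alpha_1 + \alpha_2||\alpha_2$, and that $\cL_0$ is reproduced through $[\phi_0,\epsilon] = -\phi_0$; were this to persist, some degree $1$ cocycle $y$ would satisfy $[\phi_0, y] = c\,\epsilon$ with $c \ne 0$, and then $\phi_0, \epsilon, y$ would span a copy of $\mathfrak{sl}_2(K)$ in $\HH^1(A)$ (the brackets $[\epsilon,\phi_0] = -\phi_0$, $[\epsilon, y] = y$, $[\phi_0, y] = c\,\epsilon$ close up), contradicting solvability since $\mathfrak{sl}_2(K)$ is not solvable in characteristic $\ne 2$. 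The remaining task is to exclude this: the relation above ties $c$ to $y_{\alpha_1,\alpha_2^2}$, and one must feed the full cocycle condition for $y$ together with $\alpha_i||\alpha_i^2 \notin \Sigma_2$ into the reduction system, propagating the $\Sigma_1$- and $\Sigma_2$-constraints up the finitely many graded degrees (using the nilpotency of $\cN$) to force $y_{\alpha_1,\alpha_2^2} = 0$ and hence $c = 0$. I expect this propagation to be the main obstacle, as it is exactly the step distinguishing these solvable algebras from the non-solvable Witt-type exceptions excluded in the statement.
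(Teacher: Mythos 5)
Your setup coincides with the paper's: both proofs work with the length grading $\cL=\bigoplus_i\cL_i$, observe that the $\Sigma_1$-hypothesis confines $\cL_1$ to the span of $\alpha_1||\alpha_1,\alpha_2||\alpha_2$, reduce to Proposition~\ref{localB0} when $\Sigma_0=\emptyset$, and otherwise track which degree $-1$ and degree $0$ elements can survive into $\cL^{(1)}$ and $\cL^{(2)}$. Your treatment of the configurations in which $\cL_0$ contains a coordinate cocycle $\alpha_i||e$ is correct and in fact more careful than the paper's (you control the combined coefficient $S_i=y_{\alpha_i,\alpha_i\alpha_j}+y_{\alpha_i,\alpha_j\alpha_i}$ rather than each summand separately). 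The problem is that you do not finish: the last configuration of case (2) --- $\mathrm{char}(K)\neq 2$ with $\cL_0$ spanned by a single mixed cocycle $\mu_0\,\alpha_1||e+\nu_0\,\alpha_2||e$, $\mu_0\nu_0\neq 0$ --- is explicitly left open. You reduce it to showing $y_{\alpha_1,\alpha_2^2}=0$ (equivalently, excluding an $\mathfrak{sl}_2$-triple $\phi_0,\epsilon,y$), announce that this requires ``propagating the $\Sigma_1$- and $\Sigma_2$-constraints up the finitely many graded degrees,'' and stop, calling it ``the main obstacle.'' A proof that ends by naming its main obstacle is not a proof; as written, the statement is not established.

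For comparison, the paper closes all configurations at once by arguing at the level of $\Sigma_2$ rather than of individual brackets: it shows first that $\alpha_i||\alpha_i\alpha_j$ and $\alpha_i||\alpha_j\alpha_i$ ($i\neq j$) cannot be summands of any cocycle, since bracketing with the degree $-1$ part of $\cL$ would force the forbidden summand $\alpha_i||\alpha_j$ to appear in an element of $\cL$; then $\alpha_i||\alpha_i\notin\cL^{(1)}$ (the only remaining source is $[\alpha_i||\alpha_i^2,\alpha_i||e]=-2\,\alpha_i||\alpha_i$, killed by $\mathrm{char}(K)=2$ in case (1) and by $\alpha_i||\alpha_i^2\notin\Sigma_2$ in case (2)); hence $\alpha_i||e\notin\cL^{(2)}$, and $\cL^{(2)}$ falls under Corollary~\ref{loops}/Theorem~\ref{trivialclasses}. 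You should either import that argument --- i.e.\ show directly that in your mixed configuration the cross coefficients and $y_{\alpha_1,\alpha_2^2}$, $y_{\alpha_2,\alpha_1^2}$ are forced to vanish as summands of cocycles, not merely constrained as a linear combination --- or supply the propagation you postponed. (Your observation that the single relation $\mu_0 S_1+2\nu_0 y_{\alpha_1,\alpha_2^2}=0$ does not by itself kill $S_1$ is a legitimate subtlety, and the paper's one-line ``the argument is similar'' for the asymmetric case is terse on exactly this point; but identifying a difficulty is not the same as resolving it.)
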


\begin{proof}
Let $\mathcal{L}$ be as above. If $\Sigma_0$ is empty then the statement follows from  Proposition \ref{localB0}. So assume that $\Sigma_0$ is not empty. 

First we consider the case when both $\alpha_1|| e$ and $\alpha_2 || e$ are in $\Sigma_0$. Then the Lie algebra 
$\mathcal{L}$ does not contain $\alpha_i||\alpha_i\alpha_j$ and $\alpha_i||\alpha_j\alpha_i$ for $i, j\in \{1,2\}$, $i\neq j$, 
because otherwise the derived subalgebra $\cL^{(1)}$ contains $[\alpha_i || e, \alpha_i || \alpha_i\alpha_j]=  \alpha_i||\alpha_j$ and 
$[\alpha_j || e, \alpha_j|| \alpha_i\alpha_j]= \alpha_j||\alpha_i$ which are not in $\mathcal{L}$ since they are not in $\Sigma_1$ 
by hypothesis. 
Using the same argument as in Corollary~\ref{loops},  $\mathcal {L}^{(1)}$  does not contain $\alpha_i || \alpha_i$ for $i=\{1,2\}$ since the only way to obtain them is through the bracket $[\alpha_i ||\alpha^2_i, \alpha_i || e] = - 2 \alpha_i || \alpha_i$ for $i \in \{1,2\}$ which in characteristic 2 is equal to zero and if the characteristic is different from 2, by hypothesis, the elements $\alpha_i || \alpha_i^2$ are not in $\cL$ since they are not in $\Sigma_2$. The Lie algebra $\mathcal{L}^{(2)}$ does not contain $\alpha_i || e$ for $i=\{1,2\}$ since the only way to obtain them is through the bracket $[\alpha_i ||\alpha_i, \alpha_i || e]$ for $i \in \{1,2\}$ but $\alpha_i ||\alpha_i$ are not in $\mathcal{L}^{(1)}$. Therefore by Corollary~\ref{loops} the Lie algebra $\mathcal{L}^{(2)}$ is solvable and consequently $\mathcal{L}$ is solvable.

In case that only one of $\alpha_1|| e$ and $\alpha_2 || e$ are in $\Sigma_0$, the argument is similar.  \end{proof}


\subsection{The general case of not necessarily graded algebras}

In this subsection we will see how to deal with not necessarily graded finite dimensional 
algebras. For this, we will make use of results by Eisele and Raedschelders \cite{ER}.

We start by recalling some definitions and basic facts.

\begin{Definition}
Given a  $K$-algebra $A$, define
\[ {\rm Der}_{rad}(A):= \{\delta \in {\rm Der}_K(A)|\hbox{  } \delta(J(A)) \subseteq J(A) \},  \]
which is a Lie subalgebra of ${\rm Der}_K(A)$. 
\end{Definition}

\begin{Remark} The inner derivations of $A$ form a Lie ideal of ${\rm Der}_{rad}(A)$ -see \cite{ER}- so, there is an inclusion:
\[  \mathrm{HH}^1_{rad}(A):= {\rm Der}_{rad}(A)/Inn(A) \hookrightarrow  {\rm Der}_K(A)/Inn(A)= \mathrm{HH}^1(A) \label{embedding}.  \]
As a consequence, $\mathrm{HH}^1_{rad}(A)$ is a Lie subalgebra of $\mathrm{HH}^1(A)$.
\end{Remark}

The problem then is: how far is $\mathrm{HH}^1_{rad}(A)$ from $\mathrm{HH}^1(A)$? 
In other words, how far is the inclusion in Remark~\ref{embedding} from being an isomorphism? In fact, in characteristic zero, this is always 
an isomorphism, as proved by Hochschild in 
\cite[Thm. 4.2]{Hoch}. In Hochschild's terminology, this means that $J(A)$ is a characteristic ideal of $A$.

This is no longer true in characteristic $p>0$, as the following example shows: consider $A=K[x]/(x^p)$ and the $K$-linear derivation  $\delta_0: A\to A$ such that $\delta_0(x)=1$. It is well defined, and clearly it does not preserve the radical.
In this example any derivation is a scalar multiple  of $\delta_0$ plus a derivation preserving the radical, so the quotient $\mathrm{HH}^1(A)/\mathrm{HH}^1_{rad}(A)$ is $1$-dimensional.

In \cite{ER} the authors provide a sufficient condition for $\mathrm{HH}^1_{rad}(A)$ to be equal to $\mathrm{HH}^1(A)$. Their result reads as follows.

\begin{Proposition} \cite[Prop. 2.7]{ER}
Assume that $A = KQ/I$. For every loop $\alpha \in Q_1$, let $n_\alpha$ denote the minimal integer such that $ \alpha^{n_\alpha} \in J(A)^{n_\alpha +1}$. If $p$ does not divide $\prod_\alpha n_\alpha$, then
\[  {\rm Der}_{rad}(A)= {\rm Der}(A) \hbox{ and } \mathrm{HH}^1_{rad}(A)= \mathrm{HH}^1(A).  \]
\end{Proposition}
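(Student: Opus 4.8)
The plan is to prove the identity by showing directly that every $K$-derivation $\delta$ of $A$ satisfies $\delta(J(A))\subseteq J(A)$; the equality $\mathrm{HH}^1_{rad}(A)=\mathrm{HH}^1(A)$ then follows at once from the inclusion recalled in Remark~\ref{embedding}. First I would normalise $\delta$. Writing $A=E\oplus J(A)$ with $E=KQ_0$, the algebra $E$ is separable over $K$, so the restriction $\delta|_E\colon E\to A$ is an inner derivation; subtracting the corresponding inner derivation, which preserves the ideal $J(A)$ and hence is harmless, I may assume $\delta(e)=0$ for all $e\in Q_0$ and that $\delta$ is $E$-bilinear. In particular, for an arrow $\alpha$ from $s$ to $t$ one has $\delta(\alpha)=e_s\,\delta(\alpha)\,e_t$, so $\delta(\alpha)$ is a linear combination of paths parallel to $\alpha$. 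If $\alpha$ is not a loop, every such path has length $\geq 1$ and lies in $J(A)$ automatically; the only possible obstruction comes from loops, where the length-zero path $e_s$ is parallel to $\alpha$. Thus for each loop $\alpha$ at a vertex $e$ I write $\delta(\alpha)=c_\alpha\,e+r_\alpha$ with $c_\alpha\in K$ and $r_\alpha\in J(A)$, and the whole problem reduces to proving $c_\alpha=0$ for every loop.

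The key technical input is the filtration estimate $\delta(J(A)^k)\subseteq J(A)^{k-1}$ for all $k$. Indeed, applying the Leibniz rule to a path $\alpha_1\cdots\alpha_m$ of length $m\geq k$, each summand replaces one factor $\alpha_j$ by $\delta(\alpha_j)=c_{\alpha_j}e_{(j)}+r_j$; the length-zero part (which occurs only at loops, where the residual concatenation is still a valid path) yields a path of length $m-1\in J(A)^{m-1}\subseteq J(A)^{k-1}$, while the part $r_j\in J(A)$ yields an element of $J(A)^{m}\subseteq J(A)^{k-1}$. Applying this with $k=n_\alpha+1$ and using the defining property $\alpha^{n_\alpha}\in J(A)^{n_\alpha+1}$ gives
\[ \delta(\alpha^{n_\alpha})\in J(A)^{n_\alpha}. \]
On the other hand, since $\alpha=e\alpha=\alpha e$ for the loop $\alpha$ at $e$, the Leibniz rule computes
\[ \delta(\alpha^{n_\alpha})=\sum_{k=0}^{n_\alpha-1}\alpha^{k}\,\delta(\alpha)\,\alpha^{\,n_\alpha-1-k}=n_\alpha\,c_\alpha\,\alpha^{n_\alpha-1}+\sum_{k=0}^{n_\alpha-1}\alpha^{k}\,r_\alpha\,\alpha^{\,n_\alpha-1-k}, \]
and every term of the last sum lies in $J(A)^{n_\alpha}$ because $r_\alpha\in J(A)$. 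Comparing the two expressions yields $n_\alpha\,c_\alpha\,\alpha^{n_\alpha-1}\in J(A)^{n_\alpha}$.

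To finish, I would exploit the minimality of $n_\alpha$. Since arrows are nonzero in $J(A)/J(A)^2$ we have $n_\alpha\geq 2$, so $\alpha^{n_\alpha-1}$ is a genuine power with exponent $\geq 1$, and minimality gives $\alpha^{n_\alpha-1}\notin J(A)^{n_\alpha}$, i.e. its class in $J(A)^{n_\alpha-1}/J(A)^{n_\alpha}$ is nonzero. Hence the relation above forces the scalar $n_\alpha c_\alpha$ to vanish in $K$. As $p$ is prime, the hypothesis $p\nmid\prod_\alpha n_\alpha$ is equivalent to $p\nmid n_\alpha$ for every loop $\alpha$, so $n_\alpha$ is invertible in $K$ and therefore $c_\alpha=0$. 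Consequently $\delta(\alpha)\in J(A)$ for every arrow, whence $\delta(J(A))\subseteq J(A)$, that is $\delta\in{\rm Der}_{rad}(A)$; since the subtracted inner derivation also preserves $J(A)$, so does the original derivation, giving ${\rm Der}_{rad}(A)={\rm Der}(A)$.

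The main obstacle is the middle step: one must extract the scalar $n_\alpha c_\alpha$ by confronting two independent evaluations of $\delta(\alpha^{n_\alpha})$ — the Leibniz expansion and the filtration shift $\delta(J(A)^k)\subseteq J(A)^{k-1}$. The point is that testing $\delta$ on precisely the power $\alpha^{n_\alpha}$ is what makes the surviving term $\alpha^{n_\alpha-1}$ nonzero in the associated graded while simultaneously sending the whole value one filtration layer deeper; isolating this numerical coefficient and recognising that its vanishing is exactly controlled by whether $n_\alpha$ is invertible in $K$ is the crux. The preliminary normalisation to an $E$-bilinear derivation (so that $\delta(\alpha)$ is supported on parallel paths) is the other point needing care, but it is routine given the separability of $E$.
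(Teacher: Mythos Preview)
The paper does not prove this proposition: it is quoted verbatim from \cite[Prop.~2.7]{ER} and used as an external input, so there is no in-paper argument to compare against. That said, your proof is correct and is essentially the natural one (and presumably close to the argument in \cite{ER}). The normalisation to an $E$-bilinear derivation via separability of $E=KQ_0$, the filtration estimate $\delta(J(A)^k)\subseteq J(A)^{k-1}$, the Leibniz expansion of $\delta(\alpha^{n_\alpha})$, and the extraction of $n_\alpha c_\alpha$ from the minimality of $n_\alpha$ together with admissibility of $I$ (so that $n_\alpha\ge 2$ and $\alpha^{n_\alpha-1}\notin J(A)^{n_\alpha}$) are all sound.

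One cosmetic remark: the filtration estimate can be obtained a touch more cleanly by writing a general element of $J(A)^k$ as a sum of products $x_1\cdots x_k$ with $x_j\in J(A)$ and applying Leibniz directly in $A$, noting $\delta(x_j)\in A=E\oplus J(A)$; this avoids any apparent reliance on a path basis and sidesteps worries about how the relations in $I$ interact with the expansion.
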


However, they also showed in \cite[Rem. 2.8]{ER} that the previous condition is not necessary. As one can deduce at this point, the difference between 
$\mathrm{HH}^1_{rad}(A)$ and $\mathrm{HH}^1(A)$
comes from loops in $Q$. Given the algebra $A=KQ/I$, with $\rm{char}(K)=p$, such that $Q$ contains loops, even if the hypotheses of \cite[Prop. 2.7]{ER}, are not satisfied, it is in fact possible 
to verify whether $\mathrm{HH}^1_{rad}(A)$ equals $\mathrm{HH}^1(A)$ or not. 

Before stating the next lemma, we adopt the following notation. We  say that a finite linear combination of paths in $KQ$ is in its {\em reduced form} if it is written as 
$\sum_i \lambda_ip_i$ where all the coefficients $\lambda_i \in K$ are not zero and all the paths $p_i \in Q$ are different.

\begin{Lemma}\label{rad=nonrad} Let $KQ/I$ be a finite dimensional algebra and let 
 $R$ be a minimal set of relations generating the ideal $I$  such that for every loop $\alpha$ in $Q$ there is a relation $r$ in 
$R$ which  has as a 
summand a path containing $\alpha$.  Consider the derivation $d: KQ\to KQ$ such that $d(\alpha)=s(\alpha)$ and such that $d$  is the identity on any other arrow. Suppose that at least one of the paths that appear in the reduced form of $d(r)$ obtained by replacing one occurrence of $\alpha$ by $s(\alpha)$ does not appear in a relation in the algebra $A$. Suppose further that if the characteristic of $K$ is p, the relation $r$ is not of the form $\alpha^{ps}$. 
Then  $\mathrm{HH}^1_{rad}(A)$ and $\mathrm{HH}^1(A)$ coincide. 
\end{Lemma}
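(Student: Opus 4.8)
The plan is to prove directly that under these hypotheses every $K$-derivation of $A$ preserves the radical, i.e. $\mathrm{Der}(A) = \mathrm{Der}_{rad}(A)$; since $\mathrm{Inn}(A) \subseteq \mathrm{Der}_{rad}(A)$ by Remark~\ref{embedding}, this is exactly the asserted equality $\mathrm{HH}^1_{rad}(A) = \mathrm{HH}^1(A)$. First I would normalize: because $E = KQ_0$ is separable over $K$, after subtracting a suitable inner derivation I may assume a given derivation $\delta$ vanishes on $E$, so that $\delta(\beta) \in s(\beta)\, A\, t(\beta)$ for every arrow $\beta$. The only basis paths lying in $s(\beta)\, A\, t(\beta)$ that are not in $J(A)$ are idempotents $s(\beta)$, and these occur only when $\beta$ is a loop. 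Hence $\delta$ preserves $J(A)$ if and only if, for every loop $\alpha$, the coefficient $\lambda_\alpha \in K$ of $s(\alpha)$ in $\delta(\alpha)$ vanishes; the whole problem thus reduces to showing $\lambda_\alpha = 0$ for all loops $\alpha$.

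Suppose for a contradiction that $\lambda_\alpha \neq 0$ for some loop $\alpha$, and let $r \in R$ be the relation containing $\alpha$ provided by the hypothesis. Lifting $\delta$ to a derivation $\tilde\delta$ of $KQ$ with the same values on arrows, the fact that $\delta$ descends to $A$ forces $\tilde\delta(r) \in I$, that is $\tilde\delta(r) = 0$ in $A$. I would then sort the summands of $\tilde\delta(r)$ by how much they shorten paths: replacing a single loop-occurrence by its idempotent component shortens a path by exactly one, whereas every other substitution (a non-loop arrow, or the radical part of $\delta$ on a loop) keeps the length the same or increases it. Consequently the strictly length-decreasing part of $\tilde\delta(r)$ is $\lambda_\alpha$ times the sum of the $\alpha$-deletion paths appearing in $d(r)$, together with analogous contributions from the other loops occurring in $r$. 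The hypothesis now supplies a path $q$ in the reduced form of $d(r)$, obtained by deleting one occurrence of $\alpha$, that does not appear in any relation; such a $q$ represents a nonzero element of the basis $\mathcal{B}$ and cannot be rewritten by the reduction system. Tracking the coefficient of $q$ when $\tilde\delta(r)$ is reduced to the basis, one finds it equals $\lambda_\alpha c$, where $c$ records the signed number of occurrences of $\alpha$; since $\tilde\delta(r) = 0$ in $A$ this coefficient must vanish, and as $c \neq 0$ we obtain $\lambda_\alpha = 0$, a contradiction.

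The main obstacle is making this coefficient extraction rigorous, i.e. proving that the contribution of $q$ to the basis expression of $\tilde\delta(r)$ is genuinely $\lambda_\alpha c$ and survives uncancelled. There are three sources of interference to rule out: the longer terms of $\tilde\delta(r)$ might reduce, via the reduction system, down to a multiple of $q$; the deletion paths coming from other loops in $r$ might coincide with $q$; and the integer $c$ might vanish in characteristic $p$. The first two I would control through the choice of $q$ as a path not appearing in any relation — so that it is irreducible and is never produced when rewriting a tip — together with length bookkeeping, since the length-decreasing terms cannot be matched by length-preserving or length-increasing ones. The third is exactly what the second hypothesis settles: for $r = \alpha^{ps}$ the number of occurrences of $\alpha$ is divisible by $p$, so $c = 0$ in $K$ and no constraint is obtained, and indeed the local example $K[x]/(x^{ps})$ does carry a non-radical derivation; excluding this case guarantees $c \neq 0$ and shows the exclusion is necessary. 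Finally I would note that the opening normalization $\delta(E) = 0$ relies only on the separability of $E = KQ_0$, which holds over any field, so the argument is characteristic-free apart from the explicitly excluded pure-power relations.
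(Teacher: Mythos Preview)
Your approach is essentially the same as the paper's: use the relation $r$ attached to a loop $\alpha$, apply the derivation, and observe that the distinguished deleted path $q$ survives in $A$, contradicting $\tilde\delta(r)\in I$. You are in fact more careful than the paper's very terse proof: you explicitly normalize an arbitrary derivation so that $\delta(E)=0$ and reduce the question to the vanishing of the scalars $\lambda_\alpha$, whereas the paper's argument reads as though it only checks that the single test derivation $d$ from the statement fails to descend; your identification of the three possible interferences (other loops, rewriting of longer terms, and the characteristic-$p$ multiplicity issue) makes explicit what the paper leaves implicit.
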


As a particular case of  Lemma~\ref{rad=nonrad} we obtain Remark 2.8 of \cite{ER}.
\begin{proof}
 Given a loop $\alpha$ in $Q$, let $r$ be a relation as in the statement. Let $c_1\dots c_n$ be a summand of $r$ and let $c_j=\alpha$ and write 
$r=c_1\dots c_n + \tilde{r}$. Let $d:A\to A$ be a $K$-linear derivation.
In $A$, we have that
\[ 0= d(r)=  d(c_1\dots c_n) + d(\tilde{r})= c_1\dots c_{j-1}c_{j+1} \dots c_n + d(\tilde{r}) \]
and that $c_1\dots c_{j-1}c_{j+1} \dots c_n$ does not cancel with any term in $d(\tilde{r})$.
But this is impossible, since by hypothesis $c_1\dots c_{j-1}c_{j+1} \dots c_n$ is not a summand in a relation in $A$. 

In characteristic $p$, we have to omit relations containing $\alpha^{ps}$ to avoid the corresponding sum of terms in $d(r)$  being zero because of the characteristic.
\end{proof}

Going back to the problem of solvability, the reduction of the general case to the radical square zero case makes use of Proposition 2.4 and 
Corollary 2.5 of \cite{ER}. For the convenience of the reader we include the statements here.

\begin{Proposition}\cite[Prop. 2.4]{ER}
Let $A$ be a finite dimensional $K$-algebra, and $3 \le  n \in \mathbb{N}$. There is a morphism of Lie algebras
\[ {\rm Der}_{rad}(A/J(A)^n) \to {\rm Der}_{rad}(A/J(A)^{n-1}) \]
with abelian kernel. If ${\rm Der}_{rad}(A/J(A)^{n-1})$ is solvable, then so is ${\rm Der}_{rad}(A/J(A)^{n})$. 
\end{Proposition}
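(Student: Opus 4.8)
The plan is to realize the desired map as the one induced by the canonical projection between truncated algebras, and then to deduce solvability from the standard extension argument for Lie algebras. Throughout I would write $J = J(A)$ and $A_m = A/J^m$, so that $J(A_m) = J/J^m$ and $J(A_m)^{k}$ is the image of $J^k$ in $A_m$. The first thing I would record is purely formal: since $(J/J^n)^{n-1} = J^{n-1}/J^n$, there is a canonical surjection $\pi \colon A_n \twoheadrightarrow A_{n-1}$ whose kernel is exactly $J(A_n)^{n-1}$. I would also isolate the elementary but essential fact that any radical-preserving derivation preserves all powers of the radical: from the Leibniz rule one has $\delta(J^{k}) \subseteq \delta(J)J^{k-1} + J\,\delta(J^{k-1})$, and induction then gives $\delta(J(A_m)^k) \subseteq J(A_m)^k$ for every $k$.

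Next I would construct the morphism. Given $\delta \in {\rm Der}_{rad}(A_n)$, the observation above shows that $\delta$ preserves $\ker \pi = J(A_n)^{n-1}$, so it descends to a $K$-linear derivation $\bar\delta$ of $A_{n-1}$; since $\delta$ preserves $J(A_n)$ and $\pi(J(A_n)) = J(A_{n-1})$, the induced $\bar\delta$ preserves $J(A_{n-1})$, that is $\bar\delta \in {\rm Der}_{rad}(A_{n-1})$. Setting $\Phi(\delta) = \bar\delta$ yields the required map. That $\Phi$ is a homomorphism of Lie algebras is then immediate, because passing to a quotient commutes with composition of endomorphisms and hence sends the commutator $\delta_1\delta_2 - \delta_2\delta_1$ to $\bar\delta_1\bar\delta_2 - \bar\delta_2\bar\delta_1$.

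The heart of the argument, and the only step I expect to require genuine care, is showing that $\ker \Phi$ is abelian; this is precisely where the hypothesis $n \ge 3$ enters. An element $\delta \in \ker \Phi$ is characterized by $\delta(A_n) \subseteq J(A_n)^{n-1}$. Taking $\delta_1, \delta_2 \in \ker\Phi$ and $a \in A_n$, one writes $\delta_2(a)$ as a sum of products $x_1 \cdots x_{n-1}$ with each $x_i \in J(A_n)$; applying $\delta_1$ via Leibniz replaces one factor $x_i$ by $\delta_1(x_i) \in J(A_n)^{n-1}$, so each resulting term lies in $J(A_n)^{(n-2)+(n-1)} = J(A_n)^{2n-3}$. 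Since $2n-3 \ge n$ exactly when $n \ge 3$, and $J(A_n)^{n} = 0$ in $A_n = A/J^n$, every such term vanishes; hence $\delta_1\delta_2 = 0$, and symmetrically $\delta_2\delta_1 = 0$, so $[\delta_1,\delta_2] = 0$. I would flag this degree bookkeeping as the point to state with precision, since it is the sole place where the numerical hypothesis is invoked.

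Finally, the solvability assertion follows by the usual extension principle. The kernel $\ker\Phi$ is an abelian, hence solvable, ideal of ${\rm Der}_{rad}(A_n)$, while the quotient ${\rm Der}_{rad}(A_n)/\ker\Phi \cong \im\Phi$ is a Lie subalgebra of ${\rm Der}_{rad}(A_{n-1})$ and is therefore solvable whenever the latter is. As an extension of a solvable Lie algebra by a solvable ideal is solvable, we conclude that ${\rm Der}_{rad}(A_n)$ is solvable, completing the proof.
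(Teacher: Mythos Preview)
Your proof is correct. Note, however, that the paper does not prove this proposition: it is quoted verbatim from \cite[Prop.~2.4]{ER} with no argument supplied, so there is no ``paper's own proof'' to compare against. Your argument is the natural one and almost certainly coincides with what appears in \cite{ER}: construct the map by descent along the canonical surjection $A/J^n \twoheadrightarrow A/J^{n-1}$ (using that radical-preserving derivations preserve all powers of the radical), compute that the kernel consists of derivations with image in $J^{n-1}/J^n$ and hence has vanishing composites by the degree count $2n-3 \ge n$, and conclude solvability via the extension of a solvable quotient by an abelian ideal. Every step is sound and the role of the hypothesis $n \ge 3$ is identified precisely.
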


\begin{Corollary}\cite[Cor. 2.5]{ER}
Let $A$ be a finite dimensional $K$-algebra, and $3 \le  n \in \mathbb{N}$. There is a morphism of Lie algebras
\[ \mathrm{HH}^1_{rad}(A/J(A)^n) \to \mathrm{HH}^1_{rad}(A/J(A)^{n-1}) \]
with abelian kernel. If $\mathrm{HH}^1_{rad}(A/J(A)^{n-1})$ is solvable, then so is $\mathrm{HH}^1_{rad}(A/J(A)^n)$. In particular, 
if $\mathrm{HH}^1_{rad}(A/J(A)^2)$
is solvable, then so is $\mathrm{HH}^1_{rad}(A)$.
\end{Corollary}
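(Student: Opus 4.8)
The plan is to deduce this Corollary from the preceding Proposition, which already produces a Lie algebra morphism $\varphi\colon {\rm Der}_{rad}(A/J(A)^n) \to {\rm Der}_{rad}(A/J(A)^{n-1})$ with abelian kernel. First I would check that $\varphi$ descends to the quotients by inner derivations. The morphism $\varphi$ is induced by the canonical surjection $\pi\colon A/J(A)^n \to A/J(A)^{n-1}$: a radical-preserving derivation preserves $J(A)^{n-1}$ by the Leibniz rule, hence induces a derivation on the quotient. Since $\pi$ is surjective, the inner derivation $\mathrm{ad}_a$ of $A/J(A)^n$ is sent to the inner derivation $\mathrm{ad}_{\pi(a)}$ of $A/J(A)^{n-1}$, and every inner derivation of $A/J(A)^{n-1}$ arises in this way. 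Thus $\varphi$ maps $Inn(A/J(A)^n)$ \emph{onto} $Inn(A/J(A)^{n-1})$ and descends to a Lie algebra morphism $\bar\varphi\colon \mathrm{HH}^1_{rad}(A/J(A)^n) \to \mathrm{HH}^1_{rad}(A/J(A)^{n-1})$.

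Next I would identify the kernel of $\bar\varphi$. Comparing the two short exact sequences of the form $0 \to Inn \to {\rm Der}_{rad} \to \mathrm{HH}^1_{rad} \to 0$ (for $n$ and for $n-1$) via the vertical maps on inner derivations, on ${\rm Der}_{rad}$, and on $\mathrm{HH}^1_{rad}$, a snake lemma argument shows that the quotient map ${\rm Der}_{rad}(A/J(A)^n) \to \mathrm{HH}^1_{rad}(A/J(A)^n)$ restricts to a \emph{surjection} from $\ker\varphi$ onto $\ker\bar\varphi$; indeed, surjectivity of $\varphi$ on inner derivations kills the connecting cokernel term, which is precisely the crucial input. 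Since $\ker\varphi$ is abelian by the Proposition and the map $\ker\varphi \to \ker\bar\varphi$ is a surjective Lie algebra homomorphism, $\ker\bar\varphi$ is a quotient of an abelian Lie algebra and hence abelian.

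Finally, for solvability I would use the short exact sequence of Lie algebras
\[ 0 \longrightarrow \ker\bar\varphi \longrightarrow \mathrm{HH}^1_{rad}(A/J(A)^n) \longrightarrow \im\bar\varphi \longrightarrow 0. \]
If $\mathrm{HH}^1_{rad}(A/J(A)^{n-1})$ is solvable, then its subalgebra $\im\bar\varphi$ is solvable, while $\ker\bar\varphi$ is abelian hence solvable; as an extension of a solvable Lie algebra by a solvable ideal is solvable, $\mathrm{HH}^1_{rad}(A/J(A)^n)$ is solvable. The \emph{in particular} statement then follows by induction on $n$ starting from the radical-square-zero case $A/J(A)^2$: since $A$ is finite dimensional, $J(A)^N = 0$ for some $N$, so $A = A/J(A)^N$, and iterating the implication from $n=3$ up to $N$ transfers solvability of $\mathrm{HH}^1_{rad}(A/J(A)^2)$ to $\mathrm{HH}^1_{rad}(A)$. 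I expect the main obstacle to be the abelian-kernel claim at the $\mathrm{HH}^1_{rad}$ level: one must verify that passing to the quotient by inner derivations does not enlarge the kernel beyond a quotient of the abelian $\ker\varphi$, which rests entirely on the surjectivity of $\varphi$ on inner derivations established in the first step.
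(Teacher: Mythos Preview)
The paper does not supply its own proof of this Corollary: it is quoted verbatim from \cite[Cor.~2.5]{ER}, alongside the preceding Proposition \cite[Prop.~2.4]{ER}, and both are used as black boxes. Your derivation of the Corollary from that Proposition is correct and is precisely the intended passage from ${\rm Der}_{rad}$ to $\mathrm{HH}^1_{rad}$: the surjectivity of $\varphi$ on inner derivations (which you verify) is exactly what makes the snake-lemma argument go through, so that $\ker\bar\varphi$ is a Lie-algebra quotient of the abelian $\ker\varphi$ and hence abelian; the solvability and induction steps are then routine.
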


We now prove analogues in the general case of the results which we have already proved for the graded case. 

The first thing to remark is that, as we will show,  Theorem \ref{Liestronglysolvable} is still true in the ungraded case.

\begin{Theorem}
\label{Liestronglysolvable-general}
Let $K$ be an algebraically closed field and let $A$ be a  finite
dimensional 
$K$-algebra. Suppose that $\mathrm{Ext}^1_A(S,S)=\{0\}$ for every simple 
$A$-module $S$ and that  $\mathrm{dim}_K(\mathrm{Ext}^1_A(S,T))\leq1$ 
for all nonisomorphic simple $A$-modules $S$ and $ T$. Then $\mathrm{HH}^1 (A)$ is a strongly solvable Lie algebra. 
In particular, $\mathrm{HH}^1 (A)$ is a solvable Lie algebra.
\end{Theorem}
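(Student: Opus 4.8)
The plan is to realise $\HH^1(A)$ as a quotient of the Lie algebra of $E$-linear derivations, to filter the latter by the powers of the radical, and to read off strong solvability from the nilpotency of the positive part of the filtration together with the commutativity of its degree-zero quotient. Concretely, I would use the cochain complex of Subsection~\ref{Brackets}: the cocycle space $\ker\delta^1\subseteq K(Q_1||\cB)$ is exactly the Lie algebra $Z:=\mathrm{Der}_E(A)$ of derivations of $A=KQ/I$ vanishing on $E=KQ_0$, and $\HH^1(A)=Z/\im\delta^0$ is its quotient by the inner $E$-derivations. By Lemma~\ref{SuperSol} it suffices to show that $Z$ is strongly solvable. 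The hypothesis $\mathrm{Ext}^1_A(S,S)=0$ says $Q$ has no loops, so for every arrow $\alpha$ the source and target idempotents $e=s(\alpha)$ and $e'=t(\alpha)$ are distinct; hence $eAe'\subseteq J(A)$, and $E$-linearity forces $\delta(\alpha)\in eAe'\subseteq J(A)$ for every $\delta\in Z$. Thus every element of $Z$ preserves the radical; this is exactly where the no-loops hypothesis is essential, and it is what lets the radical serve as a filtering device.

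Next I would set $F^{d}:=\{\delta\in Z \mid \delta(J(A)^{i})\subseteq J(A)^{i+d}\text{ for all }i\geq 0\}$, with the convention $J(A)^{0}=A$. The Leibniz rule gives $[F^{a},F^{b}]\subseteq F^{a+b}$, and the previous paragraph gives $F^{0}=Z$. Since $A$ is finite dimensional, $J(A)^{N}=0$ for some $N$, so $F^{N}=0$, and together with $[F^{1},F^{k}]\subseteq F^{k+1}$ this makes $F^{1}$ a nilpotent ideal of $Z$. To handle the quotient $F^{0}/F^{1}$ I would pass to the top radical layer: the assignment $\delta\mapsto\overline{\delta}$, the induced endomorphism of $J(A)/J(A)^{2}$, is a Lie algebra homomorphism $F^{0}\to\mathrm{End}_{E^{e}}(J(A)/J(A)^{2})$ (the image is $E^{e}$-linear because $\delta$ vanishes on $E$) whose kernel is precisely $F^{1}$. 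Since $J(A)/J(A)^{2}\cong KQ_{1}$ as an $E$-bimodule, the target is $\prod_{i,j}M_{n_{ij}}(K)$ with $n_{ij}=\dim_{K}\mathrm{Ext}^{1}_{A}(S_{i},S_{j})$; the hypotheses $n_{ii}=0$ and $n_{ij}\leq 1$ force every block to be $0$ or $K$, so this algebra is commutative, hence abelian as a Lie algebra. Therefore $F^{0}/F^{1}$ is abelian, that is $Z^{(1)}\subseteq F^{1}$, and being a subalgebra of the nilpotent Lie algebra $F^{1}$ it is itself nilpotent. Thus $Z$ is strongly solvable, and by Lemma~\ref{SuperSol} so is its quotient $\HH^{1}(A)$.

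The step I expect to be the real obstacle is obtaining strong solvability rather than mere solvability, and it is precisely this that forces the radical filtration in place of a naive transport of the graded argument. The length grading $K(Q_{1}||\cB)=\bigoplus_{i}K(Q_{1}||\cB_{i})$ that drives the graded case of Theorem~\ref{Liestronglysolvable} is not preserved by the bracket once $A$ fails to be graded, because reducing a substituted path against an inhomogeneous reduction system can lower its length; consequently the ideal $\cN=\bigoplus_{i\geq 2}\cL_{i}$ used in the graded proof has no direct ungraded counterpart. Likewise, feeding the graded result into the inductive tower of \cite[Cor.~2.5]{ER} only propagates solvability, since the kernels there, though abelian, need not be central, and so nilpotency of the derived algebra cannot be read off through the tower. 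The filtration $F^{\bullet}$ circumvents both difficulties at once: it is an intrinsic decreasing Lie filtration on $\mathrm{Der}_E(A)$ in which the nilpotency of $F^{1}$ and the commutativity of $F^{0}/F^{1}$ together confine the derived algebra to a nilpotent ideal.
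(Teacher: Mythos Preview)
Your argument is correct and takes a genuinely different route from the paper. The paper proceeds by reducing to the associated graded: it first uses the absence of loops to identify $\HH^1(A)$ with $\HH^1_{rad}(A)$, then invokes \cite[Cor.~2.5]{ER} to reduce solvability of $\HH^1_{rad}(A)$ to that of $\HH^1_{rad}(A/J(A)^2)$, and finally applies the graded Theorem~\ref{Liestronglysolvable} to the radical-square-zero algebra $A/J(A)^2$. Your approach instead works directly with the Lie algebra $Z=\mathrm{Der}_E(A)\cong\ker\delta^1$, filters it by the ideals $F^d$ measuring how far a derivation pushes into powers of $J(A)$, and reads off strong solvability from the abelianness of $F^0/F^1\hookrightarrow\mathrm{End}_{E^e}(KQ_1)$ together with the nilpotency of $F^1$.

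The comparison is worth recording. The paper's reduction is conceptually clean and reuses the graded statement as a black box, but as you correctly anticipate in your third paragraph, the tower of \cite[Cor.~2.5]{ER} only transports \emph{solvability}: its successive kernels are abelian but not central, so nilpotency of the derived subalgebra is not obviously preserved along the tower. Consequently the paper's proof, as written, establishes only the ``In particular'' clause and does not by itself justify the stronger conclusion in the theorem statement. Your filtration argument is self-contained, bypasses \cite{ER} entirely, and delivers the full strong-solvability claim: since $Z^{(1)}\subseteq F^1$ and $F^1$ is nilpotent, $Z^{(1)}$ is nilpotent, whence $Z$ and its quotient $\HH^1(A)$ are strongly solvable by Lemma~\ref{SuperSol}. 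The only cost is that one must verify $F^0=Z$, and this is exactly where the no-loops hypothesis enters, via $s(\alpha)\neq t(\alpha)\Rightarrow \delta(\alpha)\in s(\alpha)At(\alpha)\subseteq J(A)$.
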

\begin{proof}
Given $A$, since  the Ext-quiver has no loops, we know that $\mathrm{HH}^1(A)$ and $\mathrm{HH}^1_{rad}(A)$ are equal, 
and if  $\mathrm{HH}^1_{rad}(A/J(A)^2)$ is solvable, then $\mathrm{HH}^1_{rad}(A)$ is solvable too.  Notice that 
\[   {\rm Ext}^1_{A/J(A)^2}(S,S)= {\rm Ext}^1_A(S,S)= \{0\} \hbox{ and }  \]
\[ {\rm dim_KExt}^1_{A/(J(A))^2}(S,T)= {\rm dim_KExt}^1_A(S,T)\le 1  \]
for all simple $A$-modules $S,T$.
So, $A$ satisfies the hypotheses if and only if $A/J(A)^2$, which is is a graded algebra, does for Theorem \ref{Liestronglysolvable}.
As a consequence, $\mathrm{HH}^1(A/J(A)^2)$ is solvable and so is $\mathrm{HH}^1(A)$.
\end{proof}

We now look at the behaviour of the $\Sigma_j$'s when one passes from $A$ to $A/J(A)^i$.

\begin{Remark}
For $j \ge i$, it is clear that $\Sigma_j(A/J(A)^i)=0$; moreover $\Sigma_{i-1}(A/J(A)^i)=K(Q||\mathcal{B}_i)$ for all $i\ge 2$. 
\end{Remark}

Next we describe $\Sigma_0(A/J(A)^i)$, recall that:
\[ \Sigma_0(A/J(A)^i)= \{ \alpha || e | \hbox{  } \alpha || e \hbox{ is a summand of an element in } \ker(\delta^1) \} \]
and  that $\delta^ 1(\alpha || e)=  \sum_{r \in \mathcal{R}, \alpha | r} r || r^{(\alpha, e)}$.
Since we are supposing that $A$ is finite dimensional and $\alpha$ is a loop, at least one relation containing $\alpha$ must exist, so that the 
indexing set of the sum is non empty. For $i=2$, 
\[ \delta^ 1(\alpha || e)=  \sum_{\gamma \in Q_1|\gamma \neq \alpha, t(\gamma)=e } r_{\alpha\gamma} || \gamma + 
\sum_{\mu \in Q_1|\mu \neq \alpha, s(\mu)=e } r_{\mu\alpha} || \mu + \alpha^2|| 2\alpha,\]
where $r_{\alpha\gamma}$ and $r_{\mu\alpha}$ are the relations corresponding to the fact that the respective compositions of these arrows are zero.

If the characteristic is not $2$, then the last term is non zero  and implies that $\alpha || e$ will never appear as a summand of an element in $\ker(\delta^ 1)$.
In characteristic $2$, the same conclusion can be obtained by looking either at the first sum or at the second one, since at least 
one of them is indexed over a non empty set, 
unless the algebra $A$ is $K[x]/(x^n)$, for some $n\ge 2$.

As a consequence, the only case where $\Sigma_0(A)$ is empty and $\Sigma_0(A/J(A)^2))$ is non empty is when 
$A= K[x]/(x^n)$, for some $n> 2$, $char(K) = 2$ and $n$ is odd.

In the following we show the ungraded analogues of  Corollary~\ref{loops}, Corollary\ref{loops2} and  Theorem~\ref{trivialclasses} in a more restricted form. We note that each time that we will use them in the sequel, we will first verify either by hand or by applying Lemma~\ref{rad=nonrad} that $\mathrm{HH}_{rad}^1(A)= \mathrm{HH}^1(A)$.

\begin{Theorem}
\label{trivialclasses-nongraded}
Let $A = KQ/I$ be a finite dimensional $K$-algebra  such that \sloppy
$\rm{dim_K Ext}^1_A(S,T) \leq1$ for  simple $A$-modules $S$ and $T$. Suppose that $\Sigma_0(A)$ is empty. Then $\mathrm{HH}^1(A)$ is a solvable Lie algebra. 
\end{Theorem}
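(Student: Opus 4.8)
The plan is to reduce the problem to the radical-square-zero algebra $A/J(A)^2$, where the graded Theorem~\ref{trivialclasses} applies, and then to transport solvability back up to $A$ using the machinery of Eisele and Raedschelders \cite{ER}.

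First I would record the key translation of the hypothesis. The condition $\Sigma_0(A)=\emptyset$ says precisely that no cocycle in $\ker(\delta^1)$ has a nonzero summand of the form $\alpha||e$ with $\alpha$ a loop and $e=s(\alpha)$, i.e. no normalized derivation sends a loop to a nonzero scalar. For $A=KQ/I$ the radical $J(A)$ is generated by the arrows, and after normalizing by an inner derivation so that $\delta(e)=0$ for all $e\in Q_0$, the value $\delta(\alpha)$ is a combination of paths parallel to $\alpha$; such a combination can have a nonzero degree-zero part only when $\alpha$ is a loop. Hence a derivation fails to preserve $J(A)$ exactly when some $\alpha||e$ occurs in its cocycle, so $\Sigma_0(A)=\emptyset$ forces every class of $\mathrm{HH}^1(A)$ to be represented by a radical-preserving derivation. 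Since inner derivations preserve $J(A)$ (see \cite{ER}), this gives $\mathrm{HH}^1_{rad}(A)=\mathrm{HH}^1(A)$. Thus, by \cite[Cor. 2.5]{ER}, it suffices to prove that $\mathrm{HH}^1_{rad}(A/J(A)^2)$ is solvable.

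Next I would treat the generic situation, where $\Sigma_0(A/J(A)^2)$ is still empty. The algebra $B:=A/J(A)^2$ is radical square zero, hence graded with the same quiver $Q$ as $A$ and with $\mathrm{Ext}^1_B(S,T)=\mathrm{Ext}^1_A(S,T)$, so $\dim_K\mathrm{Ext}^1_B(S,T)\le 1$ for all simple modules, including the case $S=T$. In particular $Q$ has no two distinct parallel arrows, so the hypothesis of Theorem~\ref{trivialclasses} concerning $\Sigma_1$ (that $\alpha_i||\alpha_j\in\Sigma_1$ with $i\ne j$ precludes $\alpha_j||\alpha_i\in\Sigma_1$) is vacuously satisfied. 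Together with $\Sigma_0(B)=\emptyset$ this lets me invoke the graded Theorem~\ref{trivialclasses} to conclude that $\mathrm{HH}^1(B)$ is solvable; as $\mathrm{HH}^1_{rad}(B)$ is a Lie subalgebra of $\mathrm{HH}^1(B)$ it is solvable as well, and the reduction above finishes this case.

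The only obstruction, and the step I expect to be delicate, is that passing from $A$ to $A/J(A)^2$ can enlarge $\Sigma_0$: a loop that cannot be sent to a scalar in $A$ may acquire such a derivation after truncation. By the discussion preceding the statement, this occurs in exactly one situation, namely $A=K[x]/(x^n)$ with $\mathrm{char}(K)=2$ and $n>2$ odd. I would dispatch this remaining case by a direct computation. Here $A$ is local and commutative, so $\mathrm{Inn}(A)=0$ and $\mathrm{HH}^1(A)=\mathrm{Der}(A)$. Writing $\delta(x)=\sum_{i\ge 0}a_i x^i$, the relation $0=\delta(x^n)=n\,a_0\,x^{n-1}$ together with $n\ne 0$ in $K$ forces $a_0=0$, so $\mathrm{Der}(A)$ has basis $d_i\colon x\mapsto x^{i+1}$ for $0\le i\le n-2$, with $[d_i,d_j]=(j-i)\,d_{i+j}$ and $d_k=0$ for $k>n-2$. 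A summand $d_0$ can only arise from a bracket with $i+j=0$, so the derived subalgebra is contained in $\langle d_1,\dots,d_{n-2}\rangle$, which is positively graded with the bracket strictly raising degree and is therefore nilpotent; hence $\mathrm{HH}^1(A)$ is strongly solvable, and in particular solvable, completing the argument.
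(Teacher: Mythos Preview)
Your proof is correct and follows essentially the same route as the paper: reduce to $A/J(A)^2$, apply the graded Theorem~\ref{trivialclasses} there using that the absence of parallel arrows makes the $\Sigma_1$ condition vacuous, transport solvability back via \cite[Cor.~2.5]{ER}, and handle the single exceptional case $K[x]/(x^n)$ with $\mathrm{char}(K)=2$ and $n$ odd by direct computation. The one notable difference is that, to obtain $\mathrm{HH}^1_{rad}(A)=\mathrm{HH}^1(A)$, the paper invokes Lemma~\ref{rad=nonrad}, whereas you argue directly that $\Sigma_0(A)=\emptyset$ forces every normalized cocycle to take arrows into $J(A)$; your argument is more self-contained and sidesteps checking the hypotheses of that lemma.
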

\begin{proof}
As before, we consider the graded algebra $A/J(A)^2$. We note that since there are no parallel arrows in $Q$, the hypotheses of 
Theorem~\ref{trivialclasses} 
are satisfied by $A/J(A)^2$, hence $\mathrm{HH}^1(A/J(A)^2)$ is solvable and so the same holds for $\mathrm{HH}_{rad}^1(A)$. By Lemma~\ref{rad=nonrad}, it follows that $\HH^1(A)$ is solvable. The remaining case is when $A = K[x]/(x^n)$ for some $n > 2$, $char(K) = 2$ and $n$ is odd. 
In this case $\Sigma_0(A)$ is empty but $\Sigma_0(A/J(A)^2)$ is not, therefore we cannot apply the same
method as before. However, $\HH^1(A)$ has as a $K$-basis the derivations $f_i$ such that $f_i(x) = x^i$ for $1\le i \le n-1$.
It is easy to show that $\HH^1(A)$ is solvable.
\end{proof}

\begin{Proposition}
\label{loops-nongraded}
Let $A=KQ/I$ be a finite dimensional $K$-algebra 
such that $\rm{dim_K Ext}^1_A(S,T) \leq1$ for all simple 
$A$-modules $S$ and $T$. Suppose that $\alpha || \alpha^2 $ is not in $\Sigma_2(A)$  for any loop $\alpha$ in $Q_1$.   
 Then $\mathrm{HH}_{rad}^1(A)$ is a solvable Lie algebra. 
\end{Proposition}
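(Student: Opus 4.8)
The plan is to reduce the statement to the graded radical square zero algebra $B := A/J(A)^2$, exactly as in the proof of Theorem~\ref{trivialclasses-nongraded}, and then to transport solvability from $B$ back to $A$ using \cite[Cor. 2.5]{ER}. First I would observe that $B$ is finite dimensional and graded, with arrows in degree one and $J(B)^2 = 0$, and that it has the same Ext-quiver as $A$, since $\mathrm{Ext}^1$ between simple modules depends only on the radical square zero quotient. Hence $\dim_K\mathrm{Ext}^1_B(S,T) = \dim_K\mathrm{Ext}^1_A(S,T) \le 1$ for all simple modules $S$ and $T$; in particular $Q$ has no parallel arrows and carries at most one loop at each vertex.

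Next I would verify that $B$ satisfies the hypotheses of Corollary~\ref{loops}. The condition $\dim_K\mathrm{Ext}^1_B(S,S) \le 1$ is immediate. Since there are no parallel arrows, the only elements of $Q_1 || Q_1$ are of the form $\alpha || \alpha$, so $\Sigma_1(B)$ contains no pair $\alpha_i || \alpha_j$ with $i \neq j$ and the first hypothesis of Corollary~\ref{loops} holds vacuously. Moreover $B$ is radical square zero, so $\cB_2 = \emptyset$ and therefore $\Sigma_2(B) = 0$; in particular $\alpha || \alpha^2 \notin \Sigma_2(B)$ for every loop $\alpha$. This is the step where the standing hypothesis $\alpha || \alpha^2 \notin \Sigma_2(A)$ is absorbed for free on passing to $B$. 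Corollary~\ref{loops} then gives that $\mathrm{HH}^1(B)$ is a solvable Lie algebra.

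The conclusion is now quick. By the inclusion $\mathrm{HH}^1_{rad}(B) \hookrightarrow \mathrm{HH}^1(B)$ from Remark~\ref{embedding}, $\mathrm{HH}^1_{rad}(B)$ is a Lie subalgebra of $\mathrm{HH}^1(B)$, and a subalgebra of a solvable Lie algebra is solvable, so $\mathrm{HH}^1_{rad}(A/J(A)^2) = \mathrm{HH}^1_{rad}(B)$ is solvable. Applying \cite[Cor. 2.5]{ER} then yields that $\mathrm{HH}^1_{rad}(A)$ is solvable, as claimed.

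I do not expect a genuine obstacle here: the radical square zero reduction makes both the $\Sigma_1$ and the $\Sigma_2$ conditions automatic, so the only real content is the bookkeeping of which hypotheses survive the passage to $B$ together with the correct invocation of \cite[Cor. 2.5]{ER}. The point worth stressing is that, in contrast with the proof of Theorem~\ref{trivialclasses-nongraded}, I need not treat the exceptional family $A = K[x]/(x^n)$ with $\mathrm{char}(K)=2$ and $n$ odd separately: since the conclusion concerns $\mathrm{HH}^1_{rad}$ rather than $\mathrm{HH}^1$, the possible nonemptiness of $\Sigma_0(B)$ is irrelevant, because $\mathrm{HH}^1_{rad}$ never involves the degree zero piece $\cL_0$. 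Indeed, one could bypass Corollary~\ref{loops} altogether and check directly that on a radical square zero algebra without parallel arrows every radical-preserving derivation is, modulo inner derivations, diagonal, so that $\mathrm{HH}^1_{rad}(B)$ is abelian, which already implies solvability.
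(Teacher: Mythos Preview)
Your proposal is correct and follows essentially the same route as the paper: reduce to the graded radical square zero algebra $A/J(A)^2$, check that it satisfies the hypotheses of Corollary~\ref{loops} (vacuously, since there are no parallel arrows and $\cB_2=\emptyset$), and then transport solvability back to $\mathrm{HH}^1_{rad}(A)$ via \cite[Cor.~2.5]{ER}. Your additional observation that the hypothesis $\alpha\,||\,\alpha^2\notin\Sigma_2(A)$ is in fact absorbed automatically upon passing to $A/J(A)^2$, and your remark that the exceptional case $K[x]/(x^n)$ need not be treated separately here because the conclusion concerns $\mathrm{HH}^1_{rad}$ rather than $\mathrm{HH}^1$, are both valid refinements that the paper's terse proof does not spell out.
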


\begin{proof}
Again, we consider the graded algebra $A/J(A)^2$ and we note that since there are no parallel arrows in $Q$, it satisfies the hypotheses of Corollary~\ref{loops}.
The result follows.
\end{proof}

Finally we  state the analogue of Corollary~\ref{loops2}. After reducing to $A/J(A)^2$, its proof is 
analogous to the proof of Corollary~\ref{loops2}.

\begin{Corollary}
\label{loops2-nongraded}
Let $A=KQ/I$ be a finite dimensional algebra over a field $K$ of characteristic $2$ 
such that 
$\rm{dim_K Ext}^1_A(S,T) \leq1$ for all simple $A$-modules $S$ and $T$. Then $\mathrm{HH_{rad}}^1(A)$ is a solvable Lie algebra.
\end{Corollary}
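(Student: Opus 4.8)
The plan is to mirror the proofs of Theorem~\ref{trivialclasses-nongraded} and Proposition~\ref{loops-nongraded}: reduce to the radical square zero algebra $A/J(A)^2$ and then apply the graded Corollary~\ref{loops2}. Concretely, the first step is to invoke Corollary 2.5 of \cite{ER}, which reduces the problem to showing that $\mathrm{HH}^1_{rad}(A/J(A)^2)$ is solvable: if this holds, the corollary propagates solvability up the tower of the $A/J(A)^n$ to $\mathrm{HH}^1_{rad}(A)$. Note that the conclusion is deliberately stated for $\mathrm{HH}^1_{rad}$ rather than $\mathrm{HH}^1$; in characteristic $2$ the presence of loops can force $\mathrm{HH}^1_{rad}(A) \subsetneq \mathrm{HH}^1(A)$, as the example $K[x]/(x^n)$ shows, so we do not attempt to pass through Lemma~\ref{rad=nonrad} here.

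Next I would verify the hypotheses of Corollary~\ref{loops2} for the graded algebra $A/J(A)^2$. Since $\mathrm{Ext}^1$ is determined by $A/J(A)^2$, we have $\mathrm{dim}_K \mathrm{Ext}^1_{A/J(A)^2}(S,T) = \mathrm{dim}_K \mathrm{Ext}^1_A(S,T) \leq 1$ for all simples $S, T$, exactly as recorded in the proof of Theorem~\ref{Liestronglysolvable-general}. In particular $\mathrm{dim}_K \mathrm{Ext}^1(S,S) \leq 1$, so there is at most one loop at each vertex and no parallel arrows whatsoever. Hence for $i \neq j$ there is no pair of distinct parallel arrows $\alpha_i, \alpha_j$, so $\alpha_i || \alpha_j$ never lies in $K(Q_1 || \mathcal{B}_1)$ and a fortiori never in $\Sigma_1$. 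The implication appearing in the hypothesis of Corollary~\ref{loops2} is therefore vacuously satisfied, and since $\mathrm{char}(K) = 2$ that corollary yields that $\mathrm{HH}^1(A/J(A)^2)$ is solvable.

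Finally, $\mathrm{HH}^1_{rad}(A/J(A)^2)$ embeds as a Lie subalgebra of $\mathrm{HH}^1(A/J(A)^2)$ by the inclusion of Remark~\ref{embedding}, and a subalgebra of a solvable Lie algebra is solvable; thus $\mathrm{HH}^1_{rad}(A/J(A)^2)$ is solvable, which by the first step completes the argument. I expect no serious obstacle, since the content is essentially bookkeeping. The only mildly delicate point is the exceptional family $K[x]/(x^n)$ with $n$ odd in characteristic $2$, for which $\Sigma_0(A)$ is empty but $\Sigma_0(A/J(A)^2)$ is not; however, the characteristic-$2$ Corollary~\ref{loops2} imposes no condition on $\Sigma_0$, so this case is handled uniformly and causes no trouble.
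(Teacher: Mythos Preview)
Your proposal is correct and follows essentially the same approach as the paper, which simply states that after reducing to $A/J(A)^2$ the proof is analogous to that of Corollary~\ref{loops2}. You have spelled out the details of this reduction carefully, including the subalgebra argument for passing from $\mathrm{HH}^1(A/J(A)^2)$ to $\mathrm{HH}^1_{rad}(A/J(A)^2)$ and the observation that the $\Sigma_1$ hypothesis is vacuous in the absence of parallel arrows.
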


\section{Applications}
\label{applications}
In this section we prove the solvability of $\mathrm{HH}^1(A)$ for the symmetric tame algebras $A$ classified in \cite{Skow}, as an application of the results  in Section~\ref{conditions}. As before, throughout  this section $K$ is an algebraically closed field. 


\subsection{Dihedral, semi-dihedral and quaternion algebras}

We first focus on symmetric tame algebras of dihedral, semi-dihedral and quaternion type, studied and classified up to Morita equivalence and up to scalars in \cite{Erd}. This list of algebras contains all tame blocks of group algebras of finite groups. The  classification in \cite{Erd},  has been extended up to derived equivalences in \cite{Holm} and 
more recently most of the algebras  of dihedral, semi-dihedral and quaternion  have been distinguished  up to stable equivalence of Morita type~\cite{ZZ}. For algebras of dihedral type this  classification is complete \cite{Tai}.

\begin{Theorem}
\label{DSQ}
Let $K$ be an algebraically closed field of arbitrary characteristic. Let $A$ be a symmetric tame algebra of dihedral, semi-dihedral or quaternion type not derived equivalent to $K[X,Y]/(X^2, Y^2)$ if the characteristic of $K$ is not 2. Then $\mathrm{HH^1}(A)$ is a solvable Lie algebra.
\end{Theorem}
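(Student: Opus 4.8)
The plan is to combine the derived invariance of $\HH^1$ with Erdmann's classification, reducing to the representatives listed in \cite{Erd}, and then to split the analysis according to whether the Ext-quiver of a representative satisfies $\dim_K\mathrm{Ext}^1_A(S,T)\le 1$ for all simples $S,T$ or whether it is local with two loops, the latter falling outside the criteria of Section~\ref{conditions}.

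Since $\HH^1(A)$ is invariant as a Lie algebra under derived equivalence \cite{K} and the algebras of dihedral, semi-dihedral and quaternion type are classified up to derived equivalence in \cite{Holm} (refining \cite{Erd}), it suffices to prove solvability for one convenient representative $A=KQ/I$ per class. For the representatives with more than one simple module I would read off from \cite{Erd} that distinct vertices are joined by at most one arrow and that each vertex carries at most one loop, so $\dim_K\mathrm{Ext}^1_A(S,T)\le 1$ for all simples $S,T$. If $Q$ has no loops, Theorem~\ref{Liestronglysolvable-general} applies and $\HH^1(A)$ is strongly solvable in any characteristic. If $Q$ has loops, then for $\mathrm{char}(K)\neq 2$ I would verify $\Sigma_0(A)=\emptyset$ --- each loop $\alpha$ occurs in an Erdmann relation and, by the reduction to $A/J(A)^2$ in the Remark before Theorem~\ref{trivialclasses-nongraded}, the nonzero term $\alpha^2 || 2\alpha$ appearing in $\delta^1(\alpha || e)$ forbids $\alpha || e$ from being a summand of a cocycle --- and conclude by Theorem~\ref{trivialclasses-nongraded}; for $\mathrm{char}(K)=2$ I would apply Corollary~\ref{loops2-nongraded} to obtain that $\HH^1_{rad}(A)$ is solvable and then Lemma~\ref{rad=nonrad} to identify $\HH^1_{rad}(A)$ with $\HH^1(A)$.

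The delicate case, and the main obstacle, is the local one, where $Q$ is a single vertex with two loops $\alpha_1,\alpha_2$ so that $\dim_K\mathrm{Ext}^1_A(S,S)=2$ and none of the criteria of Section~\ref{conditions} applies. Here one cannot reduce to $A/J(A)^2$: in a radical-square-zero local algebra with two loops every $K$-linear endomorphism of $\langle\alpha_1,\alpha_2\rangle$ underlies a derivation, whence $\HH^1(A/J(A)^2)\cong\mathfrak{gl}_2(K)$, which is not solvable. Instead I would work with the genuinely non-homogeneous Erdmann relations (of the form $\alpha_1^2=(\alpha_2\alpha_1)^k$ and the like) and compute $\mathrm{Der}_{rad}(A)$ directly: these relations impose linear conditions on the coefficients of $d(\alpha_1),d(\alpha_2)$ that cut $\mathfrak{gl}_2(K)$ down to a solvable subalgebra. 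Having shown $\HH^1_{rad}(A)$ solvable, I would finish with Lemma~\ref{rad=nonrad} to obtain $\HH^1(A)=\HH^1_{rad}(A)$.

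This final reduction is where the exception surfaces: for $A=K[X,Y]/(X^2,Y^2)$ in characteristic $2$ the loop relations $X^2,Y^2$ have the forbidden form $\alpha^{ps}$ with $p=2$, Lemma~\ref{rad=nonrad} fails, and $\HH^1(A)$ is the non-solvable Jacobson--Witt algebra noted in the introduction, whereas for every other local representative some loop relation is not of this shape and the constraints on $\mathrm{Der}_{rad}(A)$ do force solvability. The hardest part is thus the uniform treatment of the local families: checking that the scalar-dependent relations always cut the derivation algebra down to a solvable one, and that $K[X,Y]/(X^2,Y^2)$ in characteristic $2$ is the single place where this mechanism breaks.
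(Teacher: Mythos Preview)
Your treatment of the non-local families is essentially the paper's: reduce by derived equivalence to Holm's representatives, read off $\dim_K\mathrm{Ext}^1_A(S,T)\le 1$, apply Theorem~\ref{Liestronglysolvable-general} when there are no loops, and Theorem~\ref{trivialclasses-nongraded} (char $\ne 2$) or Corollary~\ref{loops2-nongraded} together with Lemma~\ref{rad=nonrad} (char $=2$) when there are. The paper argues $\Sigma_0(A)=\emptyset$ for the two-vertex families by length considerations in the actual Erdmann relations rather than via the $\alpha^2\|2\alpha$ term in $A/J(A)^2$, but the outcome is the same.

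The local case is where you and the paper diverge. Your claim that ``none of the criteria of Section~\ref{conditions} applies'' is incorrect: Subsection~3.3 (Proposition~\ref{localB0} and Theorem~\ref{thm:graded loops}) is written precisely for graded local algebras with two loops, and the paper uses these systematically. The key move you missed is to truncate at $J(A)^3$, not $J(A)^2$. For each local family the algebra $A/J(A)^3$ is graded (for most semi-dihedral and quaternion representatives it is literally $KQ/(X^2,Y^2,XYX,YXY)$), one checks directly that $X\|Y$, $Y\|X\notin\Sigma_1$ and that $\Sigma_0=\emptyset$ when $\mathrm{char}(K)\ne 2$, and then Proposition~\ref{localB0} or Theorem~\ref{thm:graded loops} gives solvability of $\HH^1(A/J(A)^3)$. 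Since $\HH^1_{rad}\subset\HH^1$, the subalgebra $\HH^1_{rad}(A/J(A)^3)$ is solvable, and \cite[Cor.~2.5]{ER} (iterated from $n=3$ upward) carries this to $\HH^1_{rad}(A)$; Lemma~\ref{rad=nonrad} finishes.

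Your alternative---computing $\mathrm{Der}_{rad}(A)$ directly and arguing that the Erdmann relations cut the linear part down to a solvable subalgebra of $\mathfrak{gl}_2(K)$---is not wrong in spirit, but as stated it is imprecise: the relations of $A$ impose no constraints on $\mathrm{Der}_{rad}(A/J(A)^2)$ itself (that quotient sees none of them), only on the image of the restriction map $\mathrm{Der}_{rad}(A)\to\mathrm{Der}_{rad}(A/J(A)^2)$, and you have not verified that this image is solvable for each of the dozen or so families. The paper's $J^3$-truncation packages exactly that verification into a short uniform check.
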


\begin{proof}
We organise the proof by the number of simple modules, and within each case we consider the dihedral, semi-dihedral and quaternion cases up to derived equivalence.

In the local algebra case all algebras have the same  quiver $Q$ which has one vertex $e$ and  two loops $X$ and $Y$.

By \cite{ Holm} in dihedral type we have that  up to derived equivalence $A$ is one of the following:

\begin{enumerate}
\item $D(1\mathcal{A})^1_1= KQ/(X^2,Y^2, XY-YX)$, 
\item $KQ/ (XY, YX, X^m-Y^n)$: for $m\geq n\geq 2, m+n>4$, 
\item $D(1\mathcal{A})^k_1 = KQ/ (X^2,  Y^2, (XY)^k- (YX)^k)$ in $k\geq 2$; 

\noindent \hspace{-1.4cm} and when $\mathrm{char}(K)=2$, there are two more cases: 

\item $KQ/(X^2, XY-YX, XY-Y^2)$, 
\item $D(1\mathcal{A})^k_2(d) = KQ/(X^2-(XY)^k,Y^2-d\cdot(XY)^k,(XY)^k-(YX)^k,(XY)^kX,(YX)^kY)$, for $ k\geq 2, d \in \{0,1\}$.
\end{enumerate}

In case (1), it is well-known, see for example \cite{Jacob},  that
$\mathrm{HH}^1(A)$ is a Jacobson--Witt algebra which is a simple Lie algebra of Cartan type, so, in particular, it is not solvable On the other hand,
for any other characteristic $\HH^1$ maybe solvable. 

For case (2), we note that  the algebra $A /J(A)^3$ is graded for all  $m \geq n \geq 2$ and  $m+n > 4$.  
There are two different cases, (a) is for $m,n \ge 3$ and (b) is for  $m\ge 3 $ and $n=2$. In both cases one easily checks that $\Sigma_0(A)$ is empty 
In case (a), it is also easy to verify that $X||Y$ and $Y||X$ are not $\Sigma_1(A)$. Thus by Proposition~\ref{localB0} we have that $\HH^1(A/J(A)^3)$ is solvable. 
Using Lemma \ref{rad=nonrad} and \cite[Cor. 2.5]{ER} we obtain the result in this case.

Now suppose for case (b) that  $n=2 $ and  $m\ge 3$.  Then an easy calculation shows that for $A /J(A)^3$, we have 
$\ker \delta^1 = \langle X||X, X||Y, X||X^2, Y||Y,  Y||X^2\rangle$. One easily verifies by repeatedly calculating  the brackets of the elements in 
$\ker \delta^1$, that after a small number of iterations, they are all eventually zero.  Therefore $\HH^1(A/J(A)^3)$ and hence $\HH^1(A)$  which
coincides with $\HH^1_{rad}(A)$ are solvable.

In case (3), the algebra $A$ is graded. A direct calculation shows that $X||Y$ and $Y||X$ are not $\Sigma_1$ and that if 
$\mathrm{char}(K)\neq 2$, then $X||e$ and $Y||e$ are not $\Sigma_0$. The result then follows from Theorem~\ref{local} (1) if  $\mathrm{char}(K)= 2$ and from  Proposition~\ref{localB0} if $\mathrm{char}(K)\neq 2$. 

In case (4) since $A$ is graded, it is enough to verify that $X||e$ and $Y||e$ are not in $\Sigma_0$ and that the Lie algebra $\cL_1$  has $K$-basis  
$\{  X||X + Y||Y, X||X+ X||Y,Y||X \}$.   
 Moreover, $\cL_1^{(1)}$ is abelian and thus  $\mathrm{HH}^1(A)$ is solvable.

For case (5)  we note that the algebra $A/ J(A)^3$ is isomorphic to  $KQ/(X^2, Y^2, XYX, YXY)$, which is graded. Since
$X||e$ and $Y||e$ are not in $\Sigma_0$, and $X||Y$ and  $Y||X$ are not in $\Sigma_1$, the result follows by Proposition~\ref{localB0} and Lemma~\ref{rad=nonrad}.

Next we consider local algebras of semi-dihedral type. In this situation, there are two possibilities, one of which occurs only in characteristic $2$.

(1) $SD(1\mathcal{A})^k_1 = KQ/( (XY)^k-(YX)^k,(XY)^kX,Y^2,X^2-(YX)^{k-1}Y)$, for $k\geq 2$.  We consider $A/J(A)^3$ which is again given bt 
$KQ/(X^2, Y^2, XYX, YXY)$.
It is easy to verify that $X||Y$ and $Y||X$ 
are not in  $\Sigma_1(A/J(A)^3)$. 
In addition, if $\mathrm{char}(K)\neq 2$, $X||e$ and $Y||e$ are not in $\Sigma_0$.  
Hence the solvability of $\mathrm{HH}^1(A)$ follows from Proposition \ref{localB0} and Lemma~\ref{rad=nonrad}.

(2) $SD(1\mathcal{A})^k_2(c, d) = KQ/ ( (XY)^k-(YX)^k,(XY)^kX,Y^2-d(XY)^k, X^2-(YX)^{k-1}Y+c(XY)^k)$, $\mathrm{char}(K)=2$, $k\geq 2$, $(c,d)\neq (0,0)$. In this case, we also  focus  on the algebra $A/J(A)^3$, which is again given by  $KQ/(X^2, Y^2, XYX, YXY) $ and we proceed as before.

Finally, we consider local algebras of quaternion type. Again, there are two possibilities with  one of them occurring only in characteristic $2$. 

(1) $Q(1A)^k_1 = KQ/  ((XY)^k-(YX)^k,(XY)^kX,Y^2-(XY)^{k-1}X,X^2-(YX)^{k-1}Y)$, for $k\geq 2$

and

(2)  \sloppy the algebras $Q(1\mathcal{A})^k_2(c, d)$, which occur only in characteristic 2 and are given by $$KQ/ (X^2-(YX)^{k-1}Y-c(XY)^k,Y^2-(XY)^{k-1}X-d(XY)^k,(XY)^k-(YX)^k,(XY)^kX,(YX)^kY),$$ for $k\geq 2$ and $(c,d)\neq (0,0)$.

In both cases $A/J(A)^3= KQ/(X^2, Y^2, XYX, YXY) $ and the same arguments apply.

Up to derived equivalence, there are four families of symmetric tame algebras with two simple modules and they 
all have the same quiver Q:

\[
\begin{tikzcd}
e_0 \arrow[out=170,in=110,loop,"\alpha"]
\arrow[r,bend left,"\beta"] 
&
e_1 \arrow[out=0,in=60,loop,swap,"\eta"]
 \arrow[l, bend left,"\gamma"] 
\end{tikzcd}
\]

In characteristic $2$ the solvability of $\mathrm{HH}^1(A)$ follows from Corollary 
\ref{loops2}, Corollary \ref{loops2-nongraded} and Lemma \ref{rad=nonrad}, so it is enough to consider the case $\mathrm{char}(K)\neq 2$.

\begin{enumerate}
\item $D(2\mathcal{B})^{k,s}(c) = KQ/ (\beta\eta, \eta\gamma, \gamma\beta, \alpha^2- c(\alpha\beta\gamma)^k, (\alpha\beta\gamma)^k - (\beta\gamma\alpha)^k, \eta^{s} - (\gamma\alpha\beta)^k)$ with $k\ge s\ge 1$ and $c\in \{0,1\}.$ 
By length reasons, $\eta || e_1$ and $\alpha || e_0$ are not in $\Sigma_0$.

\item $SD(2\mathcal{B})^{k,t}_1 (c) = KQ/ ( \gamma\beta, \eta\gamma, \beta\eta, \alpha^2 - (\beta\gamma\alpha)^{k-1}\beta\gamma- c(\alpha\beta\gamma)^k , \eta^{t}-  (\gamma\alpha\beta)^k, (\alpha\beta\gamma)^k - (\beta\gamma\alpha)^k) $  with $k\ge 1, t\ge 2$ and $c\in \{0,1\}$.
Analogously to case $D(2B)^{k,s}(c)$, we deduce that $\eta || e_1$ is 
not in $\Sigma_0$, and neither is $\alpha || e_0$.

\item $SD(2\mathcal{B})^{k,t}_2 (c) = KQ /(\beta\eta - (\alpha\beta\gamma)^{k-1}\alpha\beta, \eta\gamma -(\gamma\alpha\beta)^{k-1}\gamma\alpha, \gamma\beta -\eta^{t-1}, \alpha^2-  c(\alpha\beta\gamma)^k, \beta\eta^2, \eta^2\gamma) $  with $k\ge 1, t\ge 2$, $k+t\ge 4$ and $c\in \{0,1\}$.
The elements $\eta || e_0$ and $\alpha || e_0$ are not in in $\Sigma_0$.

\item $Q(2\mathcal{B})^{k,s}_1 (a, c)$: the algebras are of the form $$ KQ/( \gamma\beta- \eta^{s-1},\beta\eta- (\alpha\beta\gamma)^{k-1}\alpha\beta, \eta\gamma-(\gamma\alpha\beta)^{k-1}\gamma\alpha, \alpha^2- a(\beta\gamma\alpha)^{k-1}\beta\gamma-c(\beta\gamma\alpha)^{k}, \alpha^2\beta, \gamma\alpha^2).
$$ with $k\ge 1, s\ge 3$ and $a\neq 0$.
 Again, by length reasons, $\eta || e_1$ and $\alpha || e_0$ are not in $\Sigma_0$.
\end{enumerate}

For these algebras, we can use Theorem \ref{trivialclasses-nongraded}, since in all cases $\eta || e_1$ and $\alpha || e_0$ are not in $\Sigma_0$
by length reasons and so $\Sigma_0$ is empty.

For tame symmetric algebras with $3$ simple modules, there are three classes of algebras denoted by $3\mathcal{K}$, $3\mathcal{A}$ and $D(3\mathcal{R})^{k,s,t,u}$ with $s\geq t\geq u\geq k \geq 1$, $t\geq 2$. The quiver of the algebras in $3\mathcal{K}$ is of the form 
\[
\begin{tikzcd}[arrow style=tikz,>=stealth,row sep=4em]
e_1  
\arrow[rr,shift right=1.2ex, swap, "\beta"]
  \arrow[dr,shift right=.4ex,swap,"\kappa"]
&& e_2 \arrow[ll, swap,"\gamma"]
  \arrow[dl,shift left=.4ex, swap,"\delta" ]
\\
& e_3 
  \arrow[ur,shift right=1.2ex,swap,"\eta"]
      \arrow[ul,shift right=.4ex,swap,"\lambda"]
\end{tikzcd}
\]

while for algebras of type $3\mathcal{A}$  the quiver is:
\[
\begin{tikzcd}
e_0
\arrow[r,bend left,"\beta"] 
&
e_1
\arrow[r,bend left,"\delta"] 
 \arrow[l, bend left,"\gamma"] 
 &
 e_2
 \arrow[l, bend left,"\eta"] 
\end{tikzcd}
\]

 Thus the solvability of $\mathrm{HH}^1$ for algebras of 
type $3\mathcal{K}$ and of type $3\mathcal{A}$  follows from Theorem \ref{Liestronglysolvable-general}. 

The algebras of type  $D(3\mathcal{R})^{k,s,t,u}$ with $s\geq t\geq u\geq k \geq 1$, $t\geq 2$ have the following quiver: 

\[Q: 
\begin{tikzcd}[arrow style=tikz,>=stealth,row sep=4em]
e_1   \arrow[out=170,in=110,loop,"\alpha"]
\arrow[rr,  "\beta"]
&& e_2 \arrow[out=0,in=60,loop,swap,"\rho"]
  \arrow[dl,shift left=.4ex, "\delta" ]
\\
& e_3 \arrow[out=240,in=300,loop,swap,"\xi"]
      \arrow[ul,"\lambda"]
\end{tikzcd}
\]
and can be presented as follows:  
$KQ/ (\alpha\beta, \beta\rho, \rho\delta, \delta\xi, \xi\lambda, \lambda\alpha, \alpha^{s}-(\beta\delta\lambda)^k, \rho^t-(\delta\gamma\beta)^k, \xi^u-(\lambda\beta\delta)^k).$
 
The result holds in case $\mathrm{char}(K)=2$ by  Corollary  \ref{loops2-nongraded} and in any other characteristic, it follows from 
Theorem \ref{trivialclasses-nongraded}, after verifying that $\Sigma_0(A)=0$ due to the monomial quadratic relations.
\end{proof}


\subsection {Brauer graph algebras}
In this  subsection we prove the solvability of $\mathrm{HH}^1(A)$ for $A$ a Brauer graph algebra not derived equivalent to the trivial  extension of the Kronecker algebra. In the latter case, $\mathrm{HH}^1(A)$ is isomorphic to $\mathfrak{gl}_2(K)$  which is not solvable except in characteristic 2.   

The solvability of the first Hochschild cohomology of Brauer graph algebras with multiplicity function identically equal to one and 
not derived equivalent to the trivial extension of the Kronecker algebra has been shown in \cite{CSS}. We now prove that the first Hochschild 
cohomology of any Brauer graph algebra with any multiplicity function (apart from the trivial extension of the Kronecker algebra) 
is solvable in any characteristic, except possibly in characteristic 2. We call a relation of the form $p-q$, for paths $p, q$ in $Q$, a \emph{binomial relation}.

\begin{Theorem}
\label{Brauer}
Let $A = KQ/I$ be a Brauer graph algebra. Then $\HH^1(A)$ is a solvable Lie algebra except if $A$ is derived equivalent to the trivial extension of the Kronecker algebra in characteristic different from 2 or if the characteristic of $K$ is 2 and there is a loop $\alpha$ such that $\alpha^2 =0$. 
\end{Theorem}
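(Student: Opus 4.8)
The plan is to reduce everything to the criteria of Section~\ref{conditions}, the key input being the explicit quiver and relations of a Brauer graph algebra. Recall that $A$ is a symmetric special biserial algebra whose quiver $Q$ has vertices in bijection with the edges of the Brauer graph, whose arrows are determined by the cyclic orderings of edges around the graph vertices, and whose ideal is generated by monomial (zero) relations together with binomial relations $C_v - C_w$ identifying the two special cycles through an edge. From this description one reads off that between two distinct vertices of $Q$ there is at most one arrow in each direction and that each vertex carries at most one loop, the only exception producing multiple arrows being the double edge joining two multiplicity-one vertices, which gives (a derived equivalent of) the trivial extension of the Kronecker algebra. Hence, away from this case, $\mathrm{dim}_K\mathrm{Ext}^1_A(S,T)\le 1$ for all simple $A$-modules $S,T$; in the excluded case $\HH^1(A)\cong \mathfrak{gl}_2(K)$ by \cite{CSS}, which is solvable precisely when $\mathrm{char}(K)=2$, yielding the first exception of the statement. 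I would treat this Kronecker case separately and directly via the $\mathfrak{gl}_2(K)$ computation in every characteristic.

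Next I would dispose of the local case, where the Brauer graph is a single edge and $Q$ has one vertex with two loops $X,Y$. These are among the local symmetric tame algebras treated in Theorem~\ref{DSQ}, and solvability of $\HH^1(A)$ follows from Theorem~\ref{local} after reducing to $A/J(A)^3$ and applying Lemma~\ref{rad=nonrad} together with \cite[Cor. 2.5]{ER}, once one checks that $X||Y$ and $Y||X$ are not in $\Sigma_1$ and, when $\mathrm{char}(K)\neq 2$, that $X||X^2$ and $Y||Y^2$ are not in $\Sigma_2$. The only obstruction here is $K[X,Y]/(X^2,Y^2)$ in characteristic $2$, where $\HH^1(A)$ is a Jacobson--Witt algebra; since both loops then square to zero, this is subsumed by the characteristic-$2$ exception.

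For $A$ non-local and not the Kronecker case, we now have $\mathrm{dim}_K\mathrm{Ext}^1_A(S,T)\le 1$ and at most one loop at each vertex. When $\mathrm{char}(K)\neq 2$ I would apply Theorem~\ref{trivialclasses-nongraded}: it suffices to show $\Sigma_0(A)=\emptyset$, i.e.\ that no derivation sends a loop $\alpha$ to the idempotent $s(\alpha)$. Each loop $\alpha$ occurs in a relation of $A$, either in a binomial relation $\alpha^{m}-C$ with $m\ge 2$ or in a monomial relation, and computing $\delta^1(\alpha||e)$ shows the resulting term cannot be cancelled, so $\alpha||e$ is not a summand of any cocycle. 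Combined with Lemma~\ref{rad=nonrad}, which gives $\HH^1_{rad}(A)=\HH^1(A)$ because for each loop $\alpha$ one can exhibit a summand of $d(r)$ not appearing in any relation, this yields solvability. In characteristic $2$ I would instead invoke Corollary~\ref{loops2-nongraded}, which needs only $\mathrm{dim}_K\mathrm{Ext}^1_A(S,T)\le 1$ and $\HH^1_{rad}(A)=\HH^1(A)$; the latter again follows from Lemma~\ref{rad=nonrad}, whose characteristic-$p$ proviso fails exactly when the relation containing some loop $\alpha$ is the pure power $\alpha^{2}$, that is $\alpha^2=0$. This is precisely the second exception, and outside it $\HH^1(A)$ is solvable.

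The hard part will be the loop analysis of the last paragraph, namely extracting from the Brauer relations the precise shape of $\delta^1(\alpha||e)$ and of the relation carrying each loop, verifying the hypotheses of Lemma~\ref{rad=nonrad} in that generality, and thereby isolating exactly the characteristic-$2$, $\alpha^2=0$ case (alongside the Kronecker case) as the genuine obstructions to solvability. Everything else is a routine application of Theorems~\ref{trivialclasses-nongraded} and~\ref{local} and Corollary~\ref{loops2-nongraded}.
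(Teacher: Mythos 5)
There is a genuine gap, and it occurs at the step you describe as the easy part: the claim that, apart from the trivial extension of the Kronecker algebra, the quiver of a non-local Brauer graph algebra has no parallel arrows, so that $\dim_K\mathrm{Ext}^1_A(S,T)\le 1$ and Theorem~\ref{trivialclasses-nongraded} (resp.\ Corollary~\ref{loops2-nongraded}) applies. This is false. Take the Brauer graph with two vertices $u,v$ joined by two edges $e_1,e_2$, with multiplicity $2$ at $u$ and $1$ at $v$: the quiver is exactly the quiver of the trivial extension of the Kronecker algebra (two arrows $e_1\to e_2$ and two arrows $e_2\to e_1$), but the algebra is not derived equivalent to it. More generally, whenever two edges share both endpoints, or a loop edge is involved appropriately, the quiver acquires parallel arrows while the multiplicity function keeps the algebra away from the Kronecker case. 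Since both Theorem~\ref{trivialclasses-nongraded} and Corollary~\ref{loops2-nongraded} have $\dim_K\mathrm{Ext}^1_A(S,T)\le 1$ as a hypothesis, your argument simply does not cover these algebras, and they are not among the stated exceptions, so the reduction breaks down rather than merely needing an extra case.

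The paper's proof avoids this by passing to the graded quotient $B=A/J(A)^3$ and invoking the \emph{graded} criteria (Theorem~\ref{trivialclasses} and Corollary~\ref{loops2}), whose hypotheses are phrased in terms of $\Sigma_1$ rather than the absence of parallel arrows. Concretely, for two parallel arrows $\alpha\neq\beta$ the special biserial structure guarantees a monomial relation $r_0$ of $B$ containing $\alpha$ but not $\beta$, and one checks that the summand $r_0\|r_0^{(\alpha,\beta)}$ of $\delta^1(\alpha\|\beta)$ cannot be cancelled, so $\alpha\|\beta\notin\ker\delta^1$; this verifies the $\Sigma_1$ hypothesis directly. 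The rest of your outline (the local case via Theorem~\ref{DSQ}, the $\Sigma_0$ analysis of loops via the relations $\alpha^2$ or $\alpha^m-C^n$, the use of Lemma~\ref{rad=nonrad} and \cite[Cor.~2.5]{ER}, and the isolation of the characteristic-$2$, $\alpha^2=0$ exception) matches the paper, but to repair the proposal you must either prove the $\Sigma_1$ condition for parallel arrows as the paper does, or restrict your use of the ungraded criteria to the genuinely loop-and-no-parallel-arrow situation.
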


As we will see in the proof, for a Brauer graph algebra, we have  in almost all 
cases that $\HH^1_{rad}(A) = \HH^1(A)$ and only if the characteristic of $K$ is 2, we might not 
have equality. However,  a direct calculation should show  that  $\HH^1(A)$ is solvable for all cases 
also in characteristic 2 as is shown for some of these cases in Section~\ref{sec:weakly symmetric} and for Brauer graph algebras with multiplicity function identically equal to one in \cite{CSS}.

 \begin{proof}[Proof of Theorem~\ref{Brauer}]
  Let $A = KQ/I$ be a Brauer graph algebra not derived equivalent to the trivial extension of the Kronecker algebra. Brauer graph algebras with one simple module fall into the local dihedral algebras of type (1)-(3) in the proof of Theorem~\ref{DSQ} and the result then follows in this case. So we now assume that $A$ has at least two non-isomorphic simple $A$-modules.  Then $B:= A/J(A)^3$ is a graded algebra with quiver $Q$ which might have loops and parallel arrows. Since $A$ is special biserial non-local, there is at most one loop at any given vertex.  Denote by $\cR_{mon}$ the set of monomial relations of $B$. 
 
Suppose that $\alpha$ and $\beta$ are two parallel arrows in $Q_1$. Then there exists no monomial relation $r \in \cR_{mon}$  such that  both 
$\alpha $ and $\beta$ are in $r$. But there exists at least one monomial relation $r_0 \in \cR_{mon}$ 
containing at least one of them, say, for example, $r_0$ contains $\alpha$. Then $r_0 || r_0^{(\alpha, 
\beta) }\neq 0$. Suppose that $\gamma$ is another arrow in $r_0$. Then $\alpha$ is not parallel to 
$\gamma$ and $r_0 || r_0^{(\alpha, \beta)} \neq r_0 || r_0^{(\gamma, \delta)}$, for any $\delta \in 
Q_1$ with $\delta || \gamma$. Thus  $\sum_{\alpha \in r | r \in \cR_{mon}} r || r^{(\alpha, \beta)} 
\neq 0$ and $\alpha || \beta \notin \ker \delta^1$.

Hence, by Corollary~\ref{loops2} if the characteristic of $K$ is 2, we have  that $\HH^1(B)$ is solvable and thus $\HH^1_{rad}(A)$ is solvable.

 If the characteristic of $K$ is not 2, we show that the hypotheses of Theorem~\ref{trivialclasses} hold for $B$.  By the above it is  enough to show 
that $\Sigma_0$ is empty. Let $\alpha \in Q_1$ be a loop at vertex $e$. Then $\alpha$ must appear in at least one relation $r$  generating $I$. 
This relation is either 
of the form $r = \alpha^2 $ or of the form $ r = \alpha^m - C^n $ for some integers $n,m \geq 1$ and a cycle $C = c_1 \ldots c_k$ in $Q$.

If $r = \alpha^2$ then $r$ is also a relation for $B$ and $\delta^1(\alpha ||e)$ contains as non-zero summand $2 r || \alpha$.

If $r = \alpha^m - C^n$ then $\alpha c_1$ and $c_k \alpha$ are  monomial relations for $B$ and $\delta^1(\alpha ||e) $ contains as non-zero summand $\alpha c_1 || c_1$. 

Thus the hypotheses of Theorem~\ref{trivialclasses} are verified and $\HH^1(B)$ is solvable. It then follows from  \cite[Cor. 2.5]{ER} that $\HH^1_{rad}(A)$ is solvable.

 Now suppose that every loop $\alpha$ in $A$ is in a binomial relation of the form $\alpha^m - C^n$ for some nonzero cycle $C= c_1 \ldots c_k $ and $m \geq 2$ and $n \geq 1$. Then both $\alpha c_1 $ and $c_k \alpha$ as well as $\alpha^{m+1}$ are in $I$. In this case 
	either the relation 
$\alpha c_1 $ or the relation $c_k \alpha$ yields that there is no derivation sending the loop $\alpha$ to its source, and using Lemma~\ref{rad=nonrad} we conclude that  $\HH^1_{rad}(A)=\HH^1(A)$. 

 \end{proof}

 \begin{Remark} (1) The proof of Theorem~\ref{Brauer}, shows that for all characteristics of $K$, $\HH^1_{rad}(A)$ is solvable for  a Brauer graph algebra $A$, unless $A$ is isomorphic to the trivial extension of the Kronecker algebra.

 (2)  In characteristic $2$, if $A$ is a Brauer graph algebra which has a relation $\alpha^2=0$ for a loop $\alpha$,
one can easily construct an example to show that $\HH^1_{rad}(A) \neq \HH^1(A)$. 
Consider, for instance, the algebra $KQ/I$ where $Q$ is the quiver

\[
\begin{tikzcd}[arrow style=tikz,>=stealth,row sep=4em]
e_1  
\arrow[rr,shift right=2ex, swap, "\gamma"]
&& e_2 \arrow[out=30,in=-30,loop,"\alpha"]
  \arrow[ll,shift right=.4ex,swap,"\beta"]
\end{tikzcd}
\]

 and $I=\langle \alpha^2, \gamma\beta, \beta\gamma\alpha - \alpha\beta\gamma \rangle$. The derivation that sends $\alpha$ to its vertex and any other arrow
to zero is well defined and does not preserve the radical. However as claimed above, one easily verifies by hand that  $\HH^1(A)$ is solvable since its second derived Lie algebra is zero.

 \end{Remark}


\subsection{Symmetric tame algebras}\label{sec:weakly symmetric}

In this subsection $A$ is a symmetric tame algebra which is in the classification of Skowro\'nski's survey paper in \cite{Skow}. 
Our aim is to prove the following.
\begin{Theorem}
\label{selfinjetivetame}
Let $A$ be a symmetric tame algebra that appears in the classification in \cite{Skow}  not derived equivalent to $K[X]/(X^r)$ 
when $\mathrm{char}(K)\mid r$  and not derived equivalent to the trivial extension of the Kronecker algebra if $\rm{char}(K) \neq 2$.  
Then $\mathrm{HH}^1(A)$ is a solvable Lie algebra. 
\end{Theorem}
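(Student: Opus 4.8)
The plan is to reduce Theorem~\ref{selfinjetivetame} to cases already handled, by invoking Skowro\'nski's classification of symmetric tame algebras up to derived equivalence and checking that every family in the list either falls under the algebras of dihedral, semi-dihedral or quaternion type treated in Theorem~\ref{DSQ}, under the Brauer graph algebras treated in Theorem~\ref{Brauer}, or can be dispatched directly by one of the quiver criteria from Section~\ref{conditions}. Since all the structural invariants in play --- $\mathrm{HH}^1(A)$ as a Lie algebra and its solvability --- are invariant under derived equivalence by \cite{K}, it suffices to verify the claim for one representative in each derived equivalence class. So the first step is to simply enumerate the classes appearing in \cite{Skow} and partition them into those of tame block type, those that are Brauer graph algebras, and a (hopefully short) list of remaining families.

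Next I would dispose of the large blocks of the classification. The algebras of dihedral, semi-dihedral and quaternion type are covered verbatim by Theorem~\ref{DSQ}, which already carries the exclusion of $K[X,Y]/(X^2,Y^2)$ in characteristic $2$ and of the trivial extension of the Kronecker algebra in characteristic $\neq 2$; these are exactly the exceptions recorded in the statement. Any family in \cite{Skow} that is a Brauer graph algebra is handled by Theorem~\ref{Brauer}, whose only exceptions are again the trivial extension of the Kronecker algebra (characteristic $\neq 2$) and loops $\alpha$ with $\alpha^2=0$ in characteristic $2$; the latter must be checked by hand in characteristic $2$ exactly as indicated in the remarks following Theorem~\ref{Brauer}. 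For the remaining families, I would present each representative quiver with relations and apply the appropriate criterion: Theorem~\ref{Liestronglysolvable-general} when the Ext-quiver has no loops and no parallel arrows, Theorem~\ref{trivialclasses-nongraded} when there are loops but $\Sigma_0(A)$ is empty (verified on $A/J(A)^2$ via the relations, together with Lemma~\ref{rad=nonrad} to guarantee $\mathrm{HH}^1_{rad}(A)=\mathrm{HH}^1(A)$), and Corollary~\ref{loops2-nongraded} in characteristic $2$.

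The step I expect to be the main obstacle is the bookkeeping for the families that are neither of dihedral/semi-dihedral/quaternion type nor obviously Brauer graph algebras: for these I must exhibit, for each derived equivalence class, a concrete representative and then verify the relevant $\Sigma_0$ (and occasionally $\Sigma_1$, $\Sigma_2$) conditions from its relations. The subtlety is that $\Sigma_0$ and the hypotheses of Lemma~\ref{rad=nonrad} are sensitive to the precise relations, and the passage from $A$ to $A/J(A)^2$ can create a spurious element of $\Sigma_0$; as the discussion preceding Theorem~\ref{trivialclasses-nongraded} shows, the only algebra for which $\Sigma_0(A)=\emptyset$ but $\Sigma_0(A/J(A)^2)\neq\emptyset$ is $K[x]/(x^n)$ with $n>2$ odd in characteristic $2$, so I must confirm that this pathology is absent from each family or else treat it separately. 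Throughout, I will use that loops in these algebras sit inside quadratic or binomial relations (so by length reasons $\Sigma_0$ is empty), and that Lemma~\ref{rad=nonrad} applies because at least one relation sends the loop's ``derivative'' to a path not appearing in any relation, thereby forcing $\mathrm{HH}^1_{rad}(A)=\mathrm{HH}^1(A)$.

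Finally, the second assertion of Theorem~\ref{thm-Skowronski} --- the statement specialised to dihedral, semi-dihedral and quaternion type --- is then an immediate consequence, being a restatement of Theorem~\ref{DSQ}; and the two exceptional algebras are precisely those for which $\mathrm{HH}^1$ is shown elsewhere (in \cite{Jacob} and \cite{CSS}) to be a Jacobson--Witt algebra or $\mathfrak{sl}_2(K)$, hence non-solvable, so no representative outside the stated exceptions is missed.
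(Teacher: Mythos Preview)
Your proposal is correct and follows essentially the same strategy as the paper: run through Skowro\'nski's derived-equivalence classification and dispatch each family either by Theorem~\ref{DSQ}, Theorem~\ref{Brauer}, or one of the criteria in Section~\ref{conditions}. The paper organises the case analysis slightly differently---by representation-theoretic type (finite, Euclidean with (non)singular Cartan matrix, tubular, non-standard domestic) rather than by ``dihedral/semi-dihedral/quaternion versus Brauer graph versus other''---and in particular treats the trivial extensions of canonical algebras (which have parallel arrows but no loops) as a separate family handled via Theorem~\ref{trivialclasses} rather than via Theorem~\ref{Brauer}, but the underlying arguments are the same as those you outline.
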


These algebras include the algebras of dihedral, semi-dihedral and quaternion type as well as Brauer graph algebras. We have seen in the previous sections that for these algebras the first Hochschild cohomology as a Lie algebra is solvable, except for a small number of special cases. 

  We start by recalling the following result  from \cite{Skow} (using the notation in that paper), which provides the 
derived equivalence classes of algebras of non-simple connected symmetric algebras of finite type. 
\begin{Theorem}[\cite{Skow}]
\label{finite}
The algebras $N^{em}_e , m\geq 2, e\geq 1$,  $D(m), m\geq   2$, $T(K\Delta(A_n))$ $n\geq 1$, $T(K\Delta(D_n))$ 
$n \geq 4$, $T(K\Delta(E_n))$, $6 \leq n\leq 8$ and $D^{'}(m)$, $m \geq 2$ and $\mathrm{char}(K)=2$, form a 
complete family of representatives of the derived classes of the non-simple connected symmetric algebras of finite 
type.
\end{Theorem}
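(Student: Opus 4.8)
The plan is to read Theorem~\ref{finite} as the derived-equivalence classification of representation-finite connected non-simple symmetric algebras, and to obtain it from the general structure theory of self-injective algebras of finite representation type rather than reprove everything by hand. The starting point I would use is Riedtmann's structure theorem: for a representation-finite self-injective algebra $A$ the stable Auslander--Reiten quiver has the form $\mathbb{Z}\Delta/G$, where $\Delta$ is a Dynkin diagram of type $A_n$, $D_n$ or $E_n$ and $G$ is an admissible group of automorphisms. The first step is to extract from this the numerical data — the Dynkin type $\Delta$, the frequency, and the torsion order of $G$ — which, by Asashiba's classification, form a complete invariant for derived equivalence of \emph{standard} representation-finite self-injective algebras.

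The second step is to cut out the symmetric algebras inside this picture. Symmetry constrains the Nakayama automorphism and hence the admissible group, and identifies the symmetric standard algebras either with trivial extensions $T(B)$ of tilted algebras $B$ of Dynkin type, or with the twisted orbit algebras of types $A$ and $D$. For the trivial-extension case I would invoke Rickard's theorem that the derived class of a trivial extension (equivalently, of the repetitive algebra) depends only on the derived class of $B$; since every tilted algebra of type $\Delta$ is derived equivalent to $K\Delta$, each such algebra is derived equivalent to $T(K\Delta)$. This produces the canonical representatives $T(K\Delta(A_n))$, $T(K\Delta(D_n))$ and $T(K\Delta(E_n))$, while the Nakayama family $N^{em}_e$ and the family $D(m)$ account for the remaining symmetric standard algebras of types $A$ and $D$ carrying a nontrivial twist, which are therefore separate derived classes from the trivial extensions.

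The remaining and genuinely delicate point is the non-standard case. I would isolate those representation-finite self-injective algebras that are \emph{not} standard; by Riedtmann these occur only in characteristic $2$ and give precisely the family $D'(m)$, which must be shown to be derived equivalent to none of the standard representatives. The main obstacle, and the step I expect to cost the most, is thus two-sided: proving \emph{completeness}, that every representation-finite connected non-simple symmetric algebra is derived equivalent to one on the list, which needs the full force of Asashiba's invariant together with the standard/non-standard dichotomy; and proving \emph{irredundancy}, that no two listed representatives are derived equivalent. For the latter I would separate the families by a complete derived invariant — the Asashiba type, refined where necessary by the Cartan matrix up to congruence and by the precise shape of $\mathbb{Z}\Delta/G$ — paying particular attention to distinguishing the non-standard $D'(m)$ from the standard $D(m)$ in characteristic $2$, where the stable categories look deceptively alike.
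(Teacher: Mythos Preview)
This theorem is not proved in the paper at all: it is quoted verbatim from Skowro\'nski's survey \cite{Skow} and used as a black box to enumerate the derived-equivalence classes that need to be checked in the subsequent proposition. There is therefore no ``paper's own proof'' to compare your proposal against.

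That said, your outline is a sensible reconstruction of how such a classification is obtained in the literature: Riedtmann's structure theorem for the stable AR-quiver, Asashiba's derived-equivalence invariants for standard representation-finite self-injective algebras, Rickard's result reducing trivial extensions to the derived class of the base, and the isolation of the non-standard family $D'(m)$ in characteristic~$2$. These are indeed the ingredients behind the result in \cite{Skow}. Just be aware that in the context of this paper no proof is expected or given; the theorem functions purely as an imported classification, and your sketch, while broadly correct in spirit, would require substantial work to turn into an actual proof and is not something the authors undertake.
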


\begin{Proposition}
Let $A$ be as in Theorem \ref{finite}, excluding Nakayama algebras  $N^{em}_e$ where $e=1$ and $\mathrm{char}(K)$ divides $m+1$. The Lie 
algebra $\mathrm{HH}^1(A)$ is solvable. 
\end{Proposition}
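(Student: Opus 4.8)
The plan is to exploit that $\HH^1$, with its Lie bracket and hence its solvability, is a derived invariant \cite{K}, so it is enough to check solvability for the explicit representatives occurring in Theorem~\ref{finite}. I would read off the Ext-quiver and the relations of each family from \cite{Skow} and split the analysis according to whether the Ext-quiver has a loop, i.e.\ whether $\mathrm{Ext}^1_A(S,S)\neq\{0\}$ for some simple $S$.

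First I would dispose of the representatives having at least two vertices and no loops. The Nakayama algebras $N^{em}_e$ with $e\geq 2$ have a cyclic Ext-quiver with a single arrow between consecutive vertices. For the trivial extensions $T(K\Delta(A_n))$ with $n\geq 2$, $T(K\Delta(D_n))$ and $T(K\Delta(E_n))$ one checks directly that the Ext-quiver has no loops and at most one arrow between any ordered pair of distinct vertices: a new arrow of a trivial extension $T(KQ)$ reverses a maximal path of the tree $Q$, and since $Q$ is an oriented Dynkin tree it has no oriented cycles and a unique (unoriented) path between any two vertices, so no loops and no parallel arrows arise. The standard type $D$ family $D(m)$ has the same combinatorial features. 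In each of these cases $\mathrm{Ext}^1_A(S,S)=\{0\}$ and $\dim_K\mathrm{Ext}^1_A(S,T)\leq 1$ for $S\not\cong T$, so Theorem~\ref{Liestronglysolvable-general} immediately gives that $\HH^1(A)$ is strongly solvable, hence solvable.

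It remains to treat the local representatives and the non-standard family. The local Nakayama algebras $N^{em}_e$ with $e=1$, together with $T(K\Delta(A_1))$, are precisely the truncated polynomial algebras $A=K[X]/(X^r)$. Here $\HH^1(A)$ has $K$-basis the derivations $f_i$ determined by $f_i(X)=X^i$, where $f_0$ is a derivation if and only if $\mathrm{char}(K)\mid r$, and the bracket is $[f_i,f_j]=(j-i)\,f_{i+j-1}$ (with $f_k=0$ for $k\geq r$). When $\mathrm{char}(K)\nmid r$, the assignment $\deg f_i=i-1$ makes the derived subalgebra $\langle f_2,\dots,f_{r-1}\rangle$ nilpotent, so $\HH^1(A)$ is solvable; the same holds for $r=2$ (where $\HH^1(A)$ is at most two-dimensional with one-dimensional derived algebra). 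When $\mathrm{char}(K)\mid r$ and $r\geq 3$ one instead obtains the perfect Jacobson--Witt algebra, which is exactly the excluded case $e=1$, $\mathrm{char}(K)\mid m+1$. For the non-standard algebras $D'(m)$, which live only in characteristic $2$ and whose Ext-quiver again satisfies $\dim_K\mathrm{Ext}^1_A(S,T)\leq 1$, I would apply Corollary~\ref{loops2-nongraded} to obtain solvability of $\HH^1_{rad}(A)$ and then use Lemma~\ref{rad=nonrad} to conclude $\HH^1_{rad}(A)=\HH^1(A)$, checking from the relations that no derivation sends a loop to its source. If $D(m)$ should in fact possess loops, the same pattern applies in characteristics other than $2$ by verifying that $\Sigma_0(A)=\emptyset$: as in the Brauer graph argument of Theorem~\ref{Brauer}, every loop $\alpha$ at a vertex $e$ appears in a relation forcing a nonzero summand in $\delta^1(\alpha || e)$, so $\alpha || e\notin\ker\delta^1$, and Theorem~\ref{trivialclasses-nongraded} then yields solvability.

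The principal difficulty I foresee is not conceptual but organisational: one must extract the precise Ext-quivers and relations of $D(m)$ and $D'(m)$ from \cite{Skow}, confirm in each case that $\dim_K\mathrm{Ext}^1_A(S,T)\leq 1$, and, whenever a loop occurs outside characteristic $2$, verify $\Sigma_0(A)=\emptyset$ from the defining relations (in characteristic $2$ Corollary~\ref{loops2-nongraded} removes this need). All genuinely non-solvable behaviour is confined to the local family $K[X]/(X^r)$ with $\mathrm{char}(K)\mid r$ and $r\geq 3$, which is exactly the hypothesis removed in the statement.
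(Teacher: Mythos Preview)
Your proposal is correct and follows essentially the same case-by-case strategy as the paper: invoke Theorem~\ref{Liestronglysolvable-general} for the loop-free representatives (Nakayama with $e\geq 2$, $T(K\Delta(D_n))$, $T(K\Delta(E_n))$) and treat the local algebra $K[X]/(X^{m+1})$ separately. The only notable differences are that the paper identifies $T(K\Delta(A_n))$ with the Nakayama algebra $N^n_n$ rather than arguing about its quiver directly, handles $D(m)$ and $D'(m)$ by an unspecified ``direct calculation'' instead of routing them through the criteria of Section~\ref{conditions} as you propose, and for the local case uses Theorem~\ref{trivialclasses} (via $\Sigma_0=\emptyset$) rather than your explicit bracket computation.
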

\begin{proof}

Let $N^{em}_e , m\geq 2, e\geq 1$ be a Nakayama algebra with $e$ vertices and such that all the compositions of $em+1$ consecutive 
arrows generate  the admissible ideal. If $e=1$, then  $N^{em}_e \cong K[x]/(x^{m+1})$. If  $p$ divides $m+1$, then 
$\mathrm{HH}^1(K[x]/(x^{m+1}))$ is a perfect Lie algebra, therefore not solvable. If $p$ does not divide $m+1$, then 
$x || e$ is not in $\Sigma_0$ and since the algebra is graded by Theorem~\ref{trivialclasses} we have that  $\mathrm{HH}^1(N^{em}_e)$ is a solvable Lie algebra. If $e>1$ the statement follows from Theorem \ref{Liestronglysolvable-general}.  

The trivial extension algebra $T(K\Delta(A_n))$ for $n\geq 1$ is the Nakayama algebra $N^n_n$ therefore the 
solvability of $\mathrm{HH}^1(T(K\Delta(A_n))$ follows from the previous paragraph. For the other self-injective algebras of Dynkin type, 
we proceed as follows. 

The algebras $D(m)$ and $D'(m)$ are defined in \cite[Section 3.14]{Skow} and a direct calculation shows that $\HH^1(A)$ is solvable for both $D(m)$ and $D'(m)$. 

From Theorem \ref{Liestronglysolvable-general} we directly obtain the solvability of $\mathrm{HH}^1(T(K\Delta(D_n)))$ and of $\mathrm{HH}^1(T(K\Delta(E_n)))$.

\end{proof}

The description of symmetric algebras of Euclidean type given in \cite{Skow} is as follows.
 
\begin{Theorem}\cite[Thm. 4.16]{Skow} 
 Let $A$ be a symmetric algebra of Euclidean type. The following are equivalent
\begin{itemize}
\item $A$ is symmetric and has nonsingular Cartan matrix.
\item $A$ is derived equivalent to an algebra of the form $A(p,q)$, $\Lambda(n)$ or $\Gamma(n)$.
\end{itemize} 
\end{Theorem}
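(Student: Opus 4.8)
Since this statement is quoted from Skowro\'nski's survey rather than proved anew, the plan is to indicate the machinery on which such a classification rests and to locate where its difficulty lies. The natural framework is the orbit-algebra description of self-injective algebras: every algebra $A$ of Euclidean type is, up to socle equivalence, an orbit algebra $\hat{B}/G$, where $\hat{B}$ is the repetitive algebra of a tilted algebra $B$ of Euclidean type $\tilde{\Delta}$ and $G$ is an admissible infinite cyclic group of automorphisms of $\hat{B}$. The first step would be to record that $A$ is symmetric exactly when the generator of $G$ can be taken of the form $\varphi\,\nu_{\hat{B}}$ with $\varphi$ a rigid (positive, length-preserving) automorphism and $\nu_{\hat{B}}$ the Nakayama automorphism, since it is these generators that produce orbit algebras carrying a symmetrizing form; the trivial extension $T(B)$ is the case $\varphi = \mathrm{id}$.

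The second step would be to read off the Cartan matrix of $A = \hat{B}/G$ from $B$ and $G$. For orbit algebras the Cartan matrix is governed by the Coxeter transformation of $B$ together with the period and twist encoded in $\varphi$, and I would use this to show that $\det C_A \neq 0$ precisely in the symmetric cases that are derived equivalent to a trivial extension $T(B')$ of a tilted algebra $B'$ of the same Euclidean type, the higher-period or genuinely twisted families accounting for the singular Cartan determinants. The three named families $A(p,q)$, $\Lambda(n)$, $\Gamma(n)$ are then the standard representatives realizing the possible Euclidean types, and one checks directly that each is symmetric of Euclidean type with nonsingular Cartan matrix, which gives one implication.

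For the converse I would establish the derived equivalences by tilting theory: all tilted algebras of a fixed Euclidean type lie in one derived-equivalence class, and the trivial-extension construction is compatible with derived equivalence, so an explicit tilting complex carries each nonsingular-Cartan symmetric algebra to the corresponding standard family. The main obstacle is exactly this classification-heavy step: enumerating the admissible automorphism groups, matching each orbit algebra to its Euclidean type, verifying the Cartan computation uniformly across types, and producing the tilting complexes witnessing derived equivalence to $A(p,q)$, $\Lambda(n)$ or $\Gamma(n)$. This is a substantial body of work, which is why the authors invoke \cite[Thm.~4.16]{Skow} rather than reprove it here.
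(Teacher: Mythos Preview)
Your assessment is correct: the paper does not prove this statement at all, but simply quotes it as \cite[Thm.~4.16]{Skow} and uses it as a black box to feed into the subsequent proposition on solvability of $\mathrm{HH}^1$. Your outline of the orbit-algebra and tilting-theoretic machinery behind such a classification is reasonable background, but no comparison with the paper's proof is possible since none is given; the appropriate response here is exactly what the authors do, namely cite the source.
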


\bigskip

\begin{Proposition}\label{prop:nonsingular Euclidean}
The first Hochschild cohomology space of any symmetric algebra $A$ of Euclidean type with nonsingular Cartan matrix is a solvable Lie algebra.
\end{Proposition}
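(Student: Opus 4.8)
The plan is to prove this Proposition by reducing to the explicit derived-equivalence representatives $A(p,q)$, $\Lambda(n)$ and $\Gamma(n)$ listed in the cited theorem from \cite{Skow}, since $\mathrm{HH}^1(A)$ as a Lie algebra is a derived invariant by \cite{K}. Thus it suffices to establish solvability for each of these three families of algebras individually. For each family I would first write down the quiver $Q$ and the defining relations (following the notation of \cite{Skow}), identify the loops if any, and determine whether $\mathrm{dim}_K \mathrm{Ext}^1_A(S,T) \leq 1$ holds so that the machinery of Section~\ref{conditions} applies.

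The main strategy is to dispatch each case using the criteria already proved. First I would check whether the Ext-quiver has loops at all. If a given family has no loops and no parallel arrows, then Theorem~\ref{Liestronglysolvable-general} immediately gives strong solvability, and we are done for that family. For families whose quiver does contain loops, I would verify the hypotheses of Theorem~\ref{trivialclasses-nongraded}, namely that $\mathrm{dim}_K \mathrm{Ext}^1_A(S,T) \leq 1$ and that $\Sigma_0(A)$ is empty. The latter is the key point: for a loop $\alpha$ at a vertex $e$, one must check that $\alpha \| e$ cannot appear as a summand of any element of $\ker \delta^1$. As in the proof of Theorem~\ref{DSQ}, this typically follows \emph{by length reasons} when $\alpha$ appears only in a binomial relation of the form $\alpha^m - C^n$ with $C$ a cycle of length at least $2$, since then $\alpha c_1 \| c_1$ (or a symmetric term) survives as a nonzero summand of $\delta^1(\alpha \| e)$. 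When $\mathrm{char}(K) = 2$ one can fall back on Corollary~\ref{loops2-nongraded}, which requires no hypothesis on $\Sigma_0$ beyond the bound on self-extensions.

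In each case where I reduce to the graded setting, I would pass to $B := A/J(A)^2$ or $A/J(A)^3$ exactly as in the proofs of Theorem~\ref{trivialclasses-nongraded} and Theorem~\ref{DSQ}, verify the relevant $\Sigma_0$ and $\Sigma_1$ conditions on the graded quotient, deduce solvability of $\mathrm{HH}^1(B)$, and then lift back to $A$ using \cite[Cor. 2.5]{ER} together with Lemma~\ref{rad=nonrad} to ensure $\mathrm{HH}^1_{rad}(A) = \mathrm{HH}^1(A)$. The equality $\mathrm{HH}^1_{rad}(A) = \mathrm{HH}^1(A)$ is guaranteed by Lemma~\ref{rad=nonrad} whenever each loop lies in a relation exhibiting a surviving path after applying the candidate derivation $d(\alpha) = s(\alpha)$, which is precisely the situation produced by the binomial relations of these symmetric algebras.

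The hard part will be the bookkeeping: one must have the precise presentations of $A(p,q)$, $\Lambda(n)$ and $\Gamma(n)$ in hand and confirm, family by family, that the loops (if present) sit inside relations long enough to force $\Sigma_0 = \emptyset$ and to force $\mathrm{HH}^1_{rad}(A) = \mathrm{HH}^1(A)$ via Lemma~\ref{rad=nonrad}. I anticipate that the genuinely delicate cases are those where a loop appears in a relation of the form $\alpha^2$ in characteristic $2$, mirroring the exceptional cases already flagged in Theorem~\ref{Brauer} and Theorem~\ref{selfinjetivetame}; there I would either verify solvability directly by computing that the second derived Lie algebra vanishes, or invoke Corollary~\ref{loops2-nongraded}. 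Provided no such exceptional short loop relation occurs among these three Euclidean families, the uniform argument above yields that $\mathrm{HH}^1(A)$ is solvable in every characteristic.
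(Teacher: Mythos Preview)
Your plan is workable in principle, but it misses the key shortcut the paper uses. The paper observes in one line that each of $A(p,q)$, $\Lambda(n)$, $\Gamma(n)$ is a Brauer graph algebra, so Theorem~\ref{Brauer} applies immediately and gives solvability in all cases except possibly $\Lambda(n)$ in characteristic $2$ (where a loop with $\alpha^2=0$ may occur). For that remaining case the paper passes to $A/J(A)^3$, which is graded, invokes Corollary~\ref{loops2} to get solvability of $\mathrm{HH}^1_{rad}(A)$, and then verifies by hand that $\mathrm{HH}^1_{rad}(A)=\mathrm{HH}^1(A)$.

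Your approach of running the Section~\ref{conditions} machinery directly on each of the three families would reproduce essentially the same computations that are already packaged inside the proof of Theorem~\ref{Brauer}, so it duplicates effort rather than adding anything. The one place where your plan and the paper genuinely converge is the delicate characteristic-$2$ case you anticipated: there the paper does exactly what you suggest, namely work with the graded quotient and appeal to Corollary~\ref{loops2}, but it handles the equality $\mathrm{HH}^1_{rad}(A)=\mathrm{HH}^1(A)$ by a direct check rather than via Lemma~\ref{rad=nonrad}, since the short relation $\alpha^2$ falls outside the hypotheses of that lemma when $p=2$.
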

\begin{proof}
Such an algebra is a Brauer graph algebra, therefore the statement follows from Theorem \ref{Brauer} 
except for  $\Lambda(n)$ in characteristic 2. So suppose that $A = \Lambda(n)$ and that the characteristic 
of $K$ is 2. Then $A / J(A)^3$ is graded and it follows from Corollary~\ref{loops2} that  $\HH_{rad}^1(A)$ is 
solvable.

We verify by hand that $\HH^1_{rad}(A) = \HH^1(A)$. 
\end{proof}

In order to describe what happens when $A$ is a symmetric algebra of Euclidean type with singular Cartan matrix we need 
a preliminary result.
 
\begin{Proposition}[\cite{Skow}]
Let $A$ be a 
symmetric algebra of Euclidean type with singular Cartan matrix. There exists an Euclidean canonical algebra $C$ such that $A$ is isomorphic to 
the trivial extension $T(C)$. 
\end{Proposition}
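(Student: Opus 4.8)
The plan is to deduce this proposition from the general structure theory of self-injective algebras of canonical type developed by Skowro\'nski and collaborators, of which it forms a part (see \cite{Skow}); accordingly I treat it as a recall of that classification rather than an independent result. One direction is immediate and worth isolating: a trivial extension $T(C)$ is always a symmetric algebra, and when $C$ is an Euclidean canonical algebra it is of Euclidean type. So the genuine content is the converse, that the singularity of the Cartan matrix forces $A$ into this very specific shape, and the whole effort goes into producing, from the datum of $A$, a canonical algebra $C$ together with an \emph{isomorphism} $A \cong T(C)$.

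First I would invoke the classification that presents every symmetric (more generally self-injective) algebra of Euclidean type as an orbit algebra $\hat{B}/G$ of the repetitive algebra $\hat{B}$ of a tilted or canonical algebra $B$ of Euclidean type, where $G$ is an admissible infinite cyclic group generated by a composite of the Nakayama automorphism with a power of the shift. The trivial extension $T(B)$ is exactly the orbit algebra for $G$ generated by the unshifted Nakayama automorphism, so the task is to show that the Cartan matrix selects this unshifted case and that the underlying algebra may be taken to be canonical.

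The decisive step is a Cartan-matrix computation. For the trivial extension the Cartan matrix is the symmetrization $C_B + C_B^{\mathrm{tr}}$, and a short argument using that $B$ has finite global dimension (so $\det C_B = \pm 1$) relates the singularity of $C_B + C_B^{\mathrm{tr}}$ to the degeneracy of the symmetrized Euler form of $B$. Since $B$ is of Euclidean type, its Euler form is positive semidefinite with one-dimensional radical spanned by the null root, whence the Cartan matrix of $T(B)$ is singular; by contrast the orbit algebras built with a nontrivial shift have nonsingular Cartan matrix. Thus the hypothesis that $A$ has singular Cartan matrix picks out precisely the trivial-extension orbit, and combining this with the fact that the Euclidean tilted algebra underlying $A$ is derived equivalent to a canonical algebra $C$ of Euclidean type yields $A \cong T(C)$.

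The main obstacle I anticipate is upgrading the derived-level statement $T(B) \sim T(C)$ to an honest isomorphism $A \cong T(C)$: trivial extensions behave well under derived equivalence, but the isomorphism claim requires the rigidity of symmetric orbit algebras supplied by the classification, together with careful bookkeeping matching the degeneracy of the Euler form to the vanishing of $\det(C_B + C_B^{\mathrm{tr}})$ uniformly across the Euclidean family. This is exactly the point at which one must lean on the detailed case analysis in \cite{Skow} rather than on a purely formal argument.
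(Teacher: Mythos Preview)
The paper does not give its own proof of this proposition: it is stated with attribution to \cite{Skow} and no proof environment follows. It functions purely as a quotation of a classification result from Skowro\'nski's survey, used as input for the subsequent corollary. So there is nothing in the paper to compare your argument against.

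Your sketch is a reasonable outline of the ideas behind the classification (orbit algebras of repetitive categories, the trivial extension as the unshifted orbit, and the Cartan-matrix dichotomy), and you are right to flag the passage from derived equivalence to genuine isomorphism as the delicate point that ultimately rests on the detailed case analysis in \cite{Skow}. But for the purposes of this paper no proof is expected or supplied; the proposition is simply imported from the literature.
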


\begin{Proposition}\label{prop:canonical}
Let $C= C(p,q,r)$ be a canonical algebra with parameters $p,q$ and $r$ in $\mathbb{N}_{\geq 2}$. Then $\HH^1(T(C))$ is solvable. 
\end{Proposition}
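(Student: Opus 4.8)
The plan is to write down the quiver and relations of $T(C)$ explicitly and then to mimic the proof of Theorem~\ref{Liestronglysolvable}, with the path-length grading replaced by a weighted grading adapted to the canonical relation and with one extra argument to neutralise the single pair of parallel arrows.

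First I would recall that $C=C(p,q,r)$ has a source $0$, a sink $\omega$ and three arms of lengths $p,q,r$ joining them; writing $x,y,z\in e_0Ce_\omega$ for the three arm paths, the single canonical relation reads $x+y+z=0$, so that $e_0Ce_\omega=\langle x,y\rangle$ is two--dimensional. Since the two--sided socle of $C$ equals $e_0Ce_\omega$, the trivial extension $T(C)=C\ltimes DC$ has quiver $Q$ obtained from that of $C$ by adjoining exactly two arrows $\eta_1,\eta_2\colon\omega\to 0$, dual to $x,y$; in particular $Q$ has no loops and its only pair of parallel arrows is $\{\eta_1,\eta_2\}$. The trivial--extension multiplication then produces the relations $x\eta_2=0=y\eta_1$ together with $x\eta_1=y\eta_2=e_0^{*}$, where $e_0^{*}$ spans the one--dimensional socle $J(T(C))\cap e_0T(C)e_0$; these are the relations that tell $\eta_1$ and $\eta_2$ apart.

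Next I would grade $T(C)$ by giving each arrow on the arm of length $p$ (resp. $q$, resp. $r$) degree $qr$ (resp. $pr$, resp. $pq$) and giving $\eta_1,\eta_2$ degree $1$; equivalently, $C$ carries its weighted grading and $DC$ is placed in the complementary degrees. With this choice $\deg x=\deg y=\deg z=pqr$ and a short check shows that every defining relation of $T(C)$ is homogeneous. All arrows have strictly positive degree, and since $Q$ has no loops, every arrow is the unique element of least degree in its parallel class. The construction of Subsection~\ref{DefGradedLie} applies verbatim to any grading of $A$ for which the relations are homogeneous, as the differentials and the bracket~(\ref{bracket}) are then homogeneous; hence the induced grading on $\HH^1(T(C))$ is concentrated in non--negative degrees, with additive bracket $[\HH^1_d,\HH^1_{d'}]\subseteq\HH^1_{d+d'}$.

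A degree--zero cocycle is a derivation $d$ with $d(a)=\lambda_a a$ on every arm arrow $a$ and $d(\eta_i)=\sum_j c_{ij}\eta_j$. Evaluating $d$ on the relations $y\eta_1=0$ and $x\eta_2=0$ and using $y\eta_2=x\eta_1=e_0^{*}\neq 0$ forces $c_{12}=c_{21}=0$, so $d$ acts diagonally on the arrows and $\HH^1_0$ is abelian. This is precisely the place where the two parallel arrows, which in the radical--square--zero truncation $T(C)/J(T(C))^2$ give a non--solvable copy of $\mathfrak{sl}_2$ through the matrix--unit bracket on $\eta_1||\eta_2$ and $\eta_2||\eta_1$ (the same phenomenon that makes the excluded trivial extension of the Kronecker algebra non--solvable), are killed by the honest relations of $T(C)$; this also explains why one cannot reduce to $T(C)/J(T(C))^2$ and must argue on $T(C)$ itself. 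Finally, $\cN:=\bigoplus_{d>0}\HH^1_d$ is a finite--dimensional non--negatively graded Lie algebra with vanishing degree--zero part, hence a nilpotent ideal, and $\HH^1(T(C))=\HH^1_0\ltimes\cN$ is abelian--by--nilpotent and therefore solvable, exactly as in Theorem~\ref{Liestronglysolvable}; the argument is characteristic--free, the only input being $e_0^{*}\neq 0$. I expect the main obstacle to be the bookkeeping of the first step: identifying the trivial--extension multiplication precisely enough to justify $y\eta_1=0\neq y\eta_2$ and $x\eta_2=0\neq x\eta_1$ uniformly in $(p,q,r)$, and confirming that no further arrows (in particular none to the interior arm vertices) are created, which hinges on the bimodule socle of $C$ being the two--sided socle $e_0Ce_\omega$ rather than the larger right socle.
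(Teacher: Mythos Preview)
Your argument is correct: the weighted grading on $T(C)$ is well defined (since $C$ is graded with $\deg x=\deg y=\deg z=pqr$ and $DC$ is its shifted graded dual), there are no derivations of negative degree because every arm arrow is the unique element in its $E^e$-component of $T(C)$ and the $\eta_i$ are the only nonzero paths $\omega\to 0$, and the relations $y\eta_1=0$, $x\eta_2=0$ together with $y\eta_2=x\eta_1=e_0^{*}\neq 0$ force the degree-zero part to act diagonally. The decomposition $\HH^1(T(C))=\HH^1_0\oplus\cN$ with $\cN$ a nilpotent ideal and $\HH^1_0$ abelian then gives solvability (this is ``nilpotent--by--abelian'' rather than ``abelian--by--nilpotent'', but of course either implies solvable).

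The paper takes a different route. Rather than grading $T(C)$ itself, it passes to the path-length-graded algebra $B=T(C)/J(T(C))^3$, observes that $\Sigma_0(B)=\emptyset$ since there are no loops, and checks that $\alpha\|\beta$ and $\beta\|\alpha$ lie outside $\Sigma_1(B)$; Theorem~\ref{trivialclasses} then gives solvability of $\HH^1(B)$, and the results of \cite{ER} transport this to $\HH^1_{rad}(T(C))=\HH^1(T(C))$. The underlying computation is the same as yours, only carried out with the \emph{quadratic} relations $b_1\eta_1=0$ and $a_1\eta_2=0$ (together with $b_1\eta_2\neq 0$, $a_1\eta_1\neq 0$), which survive in $B$, in place of your longer relations $y\eta_1=0$ and $x\eta_2=0$. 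So your remark that ``one cannot reduce to $T(C)/J(T(C))^2$'' is accurate, but the conclusion that one ``must argue on $T(C)$ itself'' is not: going one step further to $J^3$ already sees enough of the trivial-extension multiplication to kill $\eta_1\|\eta_2$ and $\eta_2\|\eta_1$. Your direct approach has the virtue of being self-contained and bypassing \cite{ER}; the paper's approach trades the bespoke grading for its general reduction machinery.
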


\begin{proof}
Let $A = T(C)$. The quiver of $A$ is given by the quiver of $C$, with two additional parallel arrows $\alpha$ and $\beta$,  starting at the sink vertex of $A$ and ending at the source vertex of $A$.  We begin by checking that $\HH^1(A / J(A)^3)$ is solvable. For this note that $A / J(A)^3$ is graded and that since the quiver has no loops, $\Sigma_0$ is empty. Furthermore, we check by hand that neither $\alpha || \beta$ nor $\beta || \alpha$ are in $\Sigma_1$. Therefore by Theorem~\ref{trivialclasses}, $\HH^1(A / J(A)^3)$ is solvable. By  \cite[Cor. 2.5]{ER},  we then have that $\HH^1_{rad}(A)$ is solvable and since $A$ has no loops the result follows from \cite[Prop. 2.7]{ER}. 
\end{proof}

\begin{Corollary}
The first Hochschild cohomology $\HH^1(A)$ for  any  symmetric algebra $A$ of Euclidean type with singular Cartan matrix is a solvable Lie algebra.
\end{Corollary}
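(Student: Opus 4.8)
The plan is to obtain this Corollary as an immediate consequence of the two Propositions that precede it, combined with the classification of Euclidean canonical algebras. By the preceding structural result of \cite{Skow}, a symmetric algebra $A$ of Euclidean type with singular Cartan matrix is isomorphic to the trivial extension $T(C)$ of some Euclidean canonical algebra $C$. Hence it suffices to prove that $\HH^1(T(C))$ is solvable for the relevant $C$, and this is essentially the content of Proposition~\ref{prop:canonical}.

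The only point that needs care is that Proposition~\ref{prop:canonical} is formulated for three-weight canonical algebras $C(p,q,r)$ with $p,q,r \in \mathbb{N}_{\geq 2}$, whereas a priori $C$ could also be a two-weight canonical algebra $C(p,q)$. First I would record that the two-weight canonical algebras are hereditary of type $\tilde{A}$, so their trivial extensions are Brauer graph algebras and, up to derived equivalence, are exactly the algebras $A(p,q)$ with nonsingular Cartan matrix already treated in Proposition~\ref{prop:nonsingular Euclidean}. The standing hypothesis that the Cartan matrix of $A$ is singular therefore excludes the two-weight case and forces $C$ to be one of the three-weight Euclidean canonical algebras, that is $C(p,q,r)$ of type $\tilde{D}$ or $\tilde{E}$, for which $(p,q,r)$ is one of $(2,2,n)$, $(2,3,3)$, $(2,3,4)$, $(2,3,5)$; in every case $p,q,r \geq 2$. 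Proposition~\ref{prop:canonical} then applies directly and gives that $\HH^1(A) = \HH^1(T(C))$ is solvable.

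I expect the matching of the singular/nonsingular Cartan matrix dichotomy with the three-weight/two-weight dichotomy of canonical algebras to be the only real obstacle, since everything else is a citation of results already established. If I preferred to sidestep that matching altogether, a safe alternative is to handle a possible two-weight $C$ directly: $T(C(p,q))$ is a Brauer graph algebra, so its solvability follows from Theorem~\ref{Brauer} once one checks it is not derived equivalent to the trivial extension of the Kronecker algebra, and this covers both cases without invoking the Cartan matrix classification.
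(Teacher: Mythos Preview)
Your proof is correct and follows essentially the same route as the paper: reduce to $A \cong T(C)$ for an Euclidean canonical algebra $C$, identify that only the three-weight types $C(2,2,r)$, $C(2,3,3)$, $C(2,3,4)$, $C(2,3,5)$ occur, and then invoke Proposition~\ref{prop:canonical}. The paper simply cites \cite{Skow} for this list rather than arguing, as you do, that the two-weight case is excluded by the nonsingularity of the Cartan matrix of $A(p,q)$; your extra justification is sound but not needed once one quotes the classification.
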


\begin{proof}
It is shown in \cite{Skow} that $A$ is isomorphic to the trivial extension of an Euclidean canonical algebra of the form $C(2,3,3), C(2,3,4), C(2,3,5)$ or  $C(2,2,r)$ with $r\geq 2$. 
\end{proof}

Next we consider weakly symmetric  algebras of tubular type. We first recall the following two results.

\begin{Theorem}[\cite{Skow}]
\label{nondomensticpoly1}
Let $A$ be a standard weakly symmetric algebra of tubular type with non-singular Cartan matrix. Then $A$ is derived equivalent to an algebra of the form $A_1(\lambda)$, $A_2(\lambda)$ with $\lambda\in K\ \{0,1\}$, $A_3$,$A_4$,
$A_5$ or $A_{12}$.
\end{Theorem}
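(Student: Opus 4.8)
The plan is to treat this as a derived-equivalence classification in the sense of Rickard and to reduce, via tilting theory, an arbitrary standard weakly symmetric algebra of tubular type to one of finitely many canonical representatives. The starting point is the structure theory of tubular algebras due to Ringel: a basic connected tubular algebra is separating, and its module category carries a $\mathbb{Q}_{\geq 0}$-indexed family of tubular $\mathbb{P}^1(K)$-families. Consequently, up to derived equivalence such an algebra is determined by its tubular type, which is one of the four weight tuples $(2,2,2,2)$, $(3,3,3)$, $(2,4,4)$, $(2,3,6)$, together with, in the $(2,2,2,2)$ case, a continuous modulus $\lambda \in K\setminus\{0,1\}$ recording the cross-ratio of the four exceptional tubes.

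First I would invoke the description of standard selfinjective algebras of tubular type as orbit algebras $\widehat{B}/G$, where $\widehat{B}$ is the repetitive algebra of a tubular algebra $B$ and $G$ is an admissible infinite cyclic group of automorphisms. Since derived equivalences between such orbit algebras are induced by equivariant derived equivalences of the underlying tubular algebras, the classification of the $A$'s up to derived equivalence is governed by the discrete datum above for $B$ together with the choice of defining automorphism. I would then impose the two standing hypotheses: weak symmetry forces the Nakayama permutation of $A$ to be trivial, restricting the admissible automorphism, while nonsingularity of the Cartan matrix excludes the degenerate trivial-extension situation (the singular case being exactly the one treated separately in the preceding results). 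Matching each tubular type against these constraints produces the short list of representatives $A_1(\lambda), A_2(\lambda), A_3, A_4, A_5$ and the exceptional $A_{12}$, where the two one-parameter families carry the surviving continuous modulus.

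Finally I would check that the list is both exhaustive and irredundant. Exhaustiveness amounts to producing, for each standard weakly symmetric tubular algebra, an explicit tilting complex realizing a derived equivalence with a listed algebra; irredundancy follows by separating the classes through derived invariants such as the congruence class of the Cartan matrix, the characteristic polynomial of the Coxeter transformation, and the graded Hochschild dimensions. The hard part is precisely the construction and bookkeeping of these tilting complexes: one must verify that no \emph{standard} algebra of a given tubular type escapes the proposed \emph{canonical} list, which is the delicate combinatorial heart of Skowro\'nski's survey. For the present paper this theorem is quoted rather than reproved, and for the applications to $\HH^1$ only the finite list of representatives is needed, since the solvability criteria of Section~\ref{conditions} can then be verified directly on each $A_i$.
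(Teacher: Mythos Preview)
The paper does not prove this theorem at all: it is stated with the attribution \cite{Skow} and used as a black box, exactly as you yourself observe in your final paragraph. There is therefore no ``paper's own proof'' to compare your proposal against; the authors simply import the classification and then check their solvability criteria on the finitely many representatives $A_1(\lambda), A_2(\lambda), A_3, A_4, A_5, A_{12}$ in the subsequent Proposition.

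Your sketch of how such a classification is obtained --- via the covering theory $A \simeq \widehat{B}/G$ for a tubular algebra $B$, the restriction on $G$ imposed by weak symmetry, the separation of the nonsingular from the singular Cartan case, and the use of tilting complexes together with derived invariants to separate the classes --- is a faithful outline of the strategy underlying the results surveyed in \cite{Skow}. It is, however, only an outline: the actual work (constructing the tilting complexes case by case, computing the invariants, and handling the continuous parameter in types $(2,2,2,2)$) is substantial and not something one reproduces in a paper whose focus is the Lie structure of $\mathrm{HH}^1$. So your proposal is not wrong, but it is answering a question the paper does not ask; for the purposes of this paper the correct ``proof'' is simply the citation.
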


\begin{Theorem}[\cite{Skow}]
\label{nondomensticpoly2}
Let $A$ be a non-standard non-domestic weakly symmetric algebra of polynomial growth with non-singular Cartan matrix. Then $A$ is derived equivalent to an algebra of the form $\Lambda_1$, $\Lambda_3(\lambda)$ where $\lambda$  $\in K\ \{0,1\}$, $\Lambda_4$ or $\Lambda_9$.
\end{Theorem}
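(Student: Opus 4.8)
Theorem~\ref{nondomensticpoly2} is a classification result imported from Skowroński's survey, so strictly speaking its proof is \cite{Skow}; rather than reproving it I would organise a sketch around the reduction techniques that underlie that classification.

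First I would unpack the two structural hypotheses. The polynomial-growth and weakly-symmetric conditions situate $A$ among the self-injective algebras whose stable category is controlled by a tubular algebra $B$ in the sense of Ringel, while the non-singularity of the Cartan matrix excludes the trivial-extension case $A \cong T(C)$ (the singular-Cartan algebras of tubular type are trivial extensions of tubular canonical algebras, handled by Proposition~\ref{prop:canonical}). Thus $A$ should be realised as an orbit algebra of the repetitive algebra of $B$ by an infinite cyclic group generated by a torsion-free automorphism, and the task reduces to classifying such orbit algebras up to derived equivalence.

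Next I would invoke the derived-equivalence invariants available for self-injective algebras of polynomial growth, in the spirit of Asashiba's representation-finite classification: the derived type should be pinned down by the derived type of the underlying tubular algebra together with matching frequency and torsion-order data. Running through the finitely many tubular types and the admissible torsion-free automorphisms yields a finite list of representatives; the assertion is that, once one restricts to the \emph{non-standard} algebras, this list reduces to the four families $\Lambda_1$, $\Lambda_3(\lambda)$, $\Lambda_4$ and $\Lambda_9$.

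The main obstacle is exactly the non-standard case. Non-standard algebras are not orbit algebras for any admissible group action and so are invisible to covering theory; they occur only in characteristic $2$ and must be written down directly as quivers with relations and then separated from the standard families by explicit computation of derived invariants. Checking that $\Lambda_3(\lambda)$ genuinely depends on $\lambda \in K \setminus \{0,1\}$ up to derived equivalence, and that the four families are pairwise non-equivalent, is the delicate part; the remainder is bookkeeping over Ringel's classification of tubular algebras.
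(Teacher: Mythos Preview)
You correctly observe at the outset that Theorem~\ref{nondomensticpoly2} is not proved in the paper: it is quoted verbatim from Skowro\'nski's survey \cite{Skow} and is used only as a black box to enumerate the derived-equivalence representatives for which $\mathrm{HH}^1$ must be analysed. The paper provides no argument for it whatsoever, so there is nothing to compare your sketch against; your ``approach'' and the paper's coincide in the only way that matters, namely deferring the proof to \cite{Skow}.

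Your outline of how the classification in \cite{Skow} is obtained is broadly reasonable but not something the present paper engages with, and a couple of points are slightly off. Non-standard self-injective algebras of polynomial growth are not literally ``invisible to covering theory'': they are socle deformations of standard ones and are still tied to the repetitive category, just not as orbit algebras under an admissible group. Also, the separation of the non-singular and singular Cartan cases is a feature of the statement rather than a step in its proof. None of this matters for the paper, which only needs the list $\Lambda_1, \Lambda_3(\lambda), \Lambda_4, \Lambda_9$ as input for the subsequent proposition on solvability of $\mathrm{HH}^1$.
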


\begin{Proposition}
Let $A$ be an algebra either as in Theorem \ref{nondomensticpoly1} or as in Theorem \ref{nondomensticpoly2}. The Lie algebra 
$\mathrm{HH}^1(A)$ is solvable.
\end{Proposition}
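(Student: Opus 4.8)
The plan is to exploit the derived invariance of the Lie algebra structure on $\HH^1$ recorded in \cite{K}: since each of Theorems~\ref{nondomensticpoly1} and~\ref{nondomensticpoly2} describes $A$ only up to derived equivalence, it suffices to establish solvability for the explicit representatives $A_1(\lambda), A_2(\lambda), A_3, A_4, A_5, A_{12}$ and $\Lambda_1, \Lambda_3(\lambda), \Lambda_4, \Lambda_9$ whose quivers with relations are listed in \cite{Skow}. I would organise the argument by the shape of the Ext-quiver, reading off from the presentations in \cite{Skow} whether each representative has loops and whether it has parallel arrows, and then match each case to one of the criteria proved in Section~\ref{conditions}. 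Note that these tubular algebras are genuinely outside the scope of Theorem~\ref{Brauer}, since they need not be special biserial.

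For the standard algebras of Theorem~\ref{nondomensticpoly1} the Ext-quivers have no loops but, because of the tubular (canonical-type) structure, may contain parallel arrows. Here I would follow the template of Proposition~\ref{prop:canonical} and of the Brauer graph case in Theorem~\ref{Brauer}: pass to the graded algebra $B = A/J(A)^3$, observe that the absence of loops forces $K(Q_1 \| \mathcal{B}_0) = \{0\}$ and hence $\Sigma_0(B) = \emptyset$, and then check by the monomial-relation argument that for each pair of parallel arrows $\alpha,\beta$ neither $\alpha \| \beta$ nor $\beta \| \alpha$ lies in $\Sigma_1(B)$. Theorem~\ref{trivialclasses} then yields that $\HH^1(B)$ is solvable, \cite[Cor.~2.5]{ER} lifts this to $\HH^1_{rad}(A)$, and since $A$ has no loops \cite[Prop.~2.7]{ER} (equivalently Lemma~\ref{rad=nonrad}) gives $\HH^1_{rad}(A) = \HH^1(A)$. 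For any representative that turns out to have no parallel arrows either, Theorem~\ref{Liestronglysolvable-general} applies immediately.

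The non-standard algebras of Theorem~\ref{nondomensticpoly2} occur, by the classification in \cite{Skow}, only when $\mathrm{char}(K) = 2$, which is precisely the hypothesis of Corollary~\ref{loops2-nongraded}. So provided the Ext-quiver satisfies $\dim_K \mathrm{Ext}^1_A(S,T) \le 1$ for all simples, Corollary~\ref{loops2-nongraded} gives that $\HH^1_{rad}(A)$ is solvable at once, and it then remains only to confirm $\HH^1_{rad}(A) = \HH^1(A)$ via Lemma~\ref{rad=nonrad}, by producing for each loop a relation whose $d$-image contains a path appearing in no relation, or by direct computation as in Proposition~\ref{prop:nonsingular Euclidean}. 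Should one of the $\Lambda_i$ have parallel arrows, I would instead reduce to $B = A/J(A)^3$ and combine the parallel-arrow analysis of the previous paragraph with Corollary~\ref{loops2} before lifting.

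The hard part will be the purely combinatorial verification, carried out case by case, of the data extracted from the presentations in \cite{Skow}: identifying loops and parallel arrows in each Ext-quiver, then checking emptiness of $\Sigma_0$ and non-membership of the parallel pairs in $\Sigma_1$. The non-standard representatives $\Lambda_1, \Lambda_3(\lambda), \Lambda_4, \Lambda_9$ are the most delicate, since their characteristic-two-specific relations can create loops $\alpha$ with $\alpha^2 = 0$, the exact situation in which $\HH^1_{rad}(A)$ may differ from $\HH^1(A)$; there one must verify the hypotheses of Lemma~\ref{rad=nonrad} explicitly, and where they fail, compute $\HH^1(A)$ by hand and check that its second derived algebra vanishes, just as in the characteristic-two Remark following Theorem~\ref{Brauer}.
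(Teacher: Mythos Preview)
Your template is sound, but your guess at the Ext-quiver structure of the representatives is backwards, and this creates a genuine gap. Consulting the presentations in \cite{Skow} shows that \emph{none} of the algebras $A_i$ or $\Lambda_j$ has parallel arrows; the obstruction, where there is one, comes from \emph{loops}. In particular $A_2(\lambda)$ and $A_5$ among the standard algebras carry loops, so your claim that the standard representatives are loop-free is false, and your plan to dispatch them by a parallel-arrow $\Sigma_1$ check would not engage with the actual difficulty. The paper's organising principle is therefore not standard versus non-standard but loop-free versus with-loops: it applies Theorem~\ref{Liestronglysolvable-general} directly to $A_1(\lambda), A_3, A_4, A_{12}, \Lambda_4, \Lambda_9$ (no loops, no parallel arrows), while for $A_5, \Lambda_1, \Lambda_3(\lambda)$ it verifies by hand that $\Sigma_0$ is empty and invokes Theorem~\ref{trivialclasses-nongraded}, and for the graded algebra $A_2(\lambda)$ it checks $\Sigma_0 = \emptyset$ and uses Theorem~\ref{trivialclasses} when $\mathrm{char}(K)\neq 2$, falling back on Corollary~\ref{loops2} when $\mathrm{char}(K)=2$.

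Your non-standard strategy also overshoots: even granting that the $\Lambda_j$ occur only in characteristic~$2$, routing $\Lambda_4$ and $\Lambda_9$ through Corollary~\ref{loops2-nongraded} plus a Lemma~\ref{rad=nonrad} verification is unnecessary, since their quivers are loop-free and Theorem~\ref{Liestronglysolvable-general} applies immediately; and for $\Lambda_1, \Lambda_3(\lambda)$ the characteristic-free verification that $\Sigma_0=\emptyset$ already gives $\HH^1_{rad}=\HH^1$ and solvability in one stroke via Theorem~\ref{trivialclasses-nongraded}. Once you correct the quiver data, the case analysis you outline collapses to exactly the paper's.
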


\begin{proof}
For the algebras $A_1(\lambda)$,$A_3$,$A_4$, $A_{12}$, $\Lambda_4$ and $\Lambda_9$ the solvability of the first Hochschild cohomology space
follows from Theorem \ref{Liestronglysolvable-general}.
For  $A_5$, $\Lambda_1$, and $\Lambda_3(\lambda)$ we directly verify that $\Sigma_0$ is empty and the result follows from Theorem~\ref{trivialclasses-nongraded}. For $A_2(\lambda)$ we check that if the characteristic of $K$ is not 2, $\Sigma_0$ is empty and the result follows from Theorem~\ref{trivialclasses} and if characteristic is 2 then the result follows from Corollary~\ref{loops2}. 
\end{proof}

\begin{Theorem}\label{thm:domestic infinite}\cite[Thm. 4.14]{Skow}
Let $A$ be a non-standard domestic self-injective algebra of infinite representation type. Then $A$ is derived equivalent to an algebra of the form $\Omega(n)$, for $n \geq 1$. 
\end{Theorem}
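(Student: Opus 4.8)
The statement is quoted verbatim as \cite[Thm.~4.14]{Skow}, so within this paper its proof is simply that citation; an independent argument would have to reconstruct a fragment of Skowro\'nski's classification of self-injective algebras of polynomial growth, and the plan below describes the shape such a reconstruction takes. The starting point is the general structure theory of self-injective algebras: up to the appropriate equivalence, a self-injective algebra of infinite representation type with a connected non-semisimple basic version is an orbit algebra $\widehat{B}/G$ of the repetitive algebra $\widehat{B}$ of a tilted algebra $B$ by an admissible infinite cyclic group $G$ of automorphisms. First I would impose the domestic (as opposed to tubular or wild) growth hypothesis, which forces $B$ to be tilted of Euclidean type, and the infinite-representation-type hypothesis, which excludes the Dynkin (finite-type) tilted algebras.

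The heart of the matter is the standard versus non-standard dichotomy. An algebra is standard when its basic version admits a simply connected Galois covering, equivalently when it arises as an orbit algebra in the na\"ive Hughes--Waschb\"usch manner; the non-standard algebras are the exceptional companions that realize the same stable Auslander--Reiten quiver without being standard, and it is a well-known phenomenon that they occur only in characteristic $2$. I would isolate these non-standard domestic infinite-type algebras by running through the Euclidean tilted bases $B$ and the admissible groups $G$ and comparing each resulting orbit algebra with its non-standard companion; the outcome of this analysis is that the non-standard domestic algebras of infinite type form a single one-parameter family, whose derived-equivalence representatives one labels $\Omega(n)$ for $n\geq 1$.

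Finally, to upgrade isomorphism classes to derived-equivalence classes I would use tilting complexes together with the invariance under derived equivalence of the number of simple modules, the Cartan determinant, and the shape of the stable Auslander--Reiten quiver; these separate the $\Omega(n)$ for distinct $n$ and show that every non-standard domestic self-injective algebra of infinite type is derived equivalent to exactly one member of the family. The main obstacle is that essentially none of this is self-contained: the reduction to orbit algebras of repetitive algebras, the characteristic-$2$ origin of non-standardness, and the derived classification itself all rest on a substantial body of structural results, so in the present context the genuinely available proof is the reference to \cite{Skow} rather than a first-principles derivation.
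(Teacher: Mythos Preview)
Your assessment is correct: the paper does not prove this statement at all but simply quotes it as \cite[Thm.~4.14]{Skow}, so the ``proof'' in the paper is exactly the citation you identify in your first sentence. Your subsequent sketch of how one would reconstruct Skowro\'nski's argument is extra context rather than something the paper supplies, but it is accurate as a description of the shape of that classification and you are right to flag that it is not self-contained.
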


\begin{Proposition}
Let $A$ be an algebra as in Theorem~\ref{thm:domestic infinite}. Then $HH^1(A)$ is solvable.  
\end{Proposition}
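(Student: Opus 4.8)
The plan is to handle the non-standard domestic self-injective algebras of infinite representation type by reducing, as in the other cases, to the behaviour of the algebras $\Omega(n)$ for $n \geq 1$. Since all the Lie-theoretic invariants we care about are preserved under derived equivalence (as recalled in the introduction, following \cite{K}), it suffices by Theorem~\ref{thm:domestic infinite} to prove solvability of $\HH^1(\Omega(n))$ for each $n \geq 1$. First I would write down explicitly the quiver $Q$ and the relations defining $\Omega(n)$ from \cite{Skow}, paying particular attention to which vertices carry loops and whether any parallel arrows occur, since these are exactly the features that obstruct the generic argument of Theorem~\ref{Liestronglysolvable-general}.

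The strategy then forks according to the characteristic and the presence of loops. If the quiver of $\Omega(n)$ has no loops and no parallel arrows, then the solvability is immediate from Theorem~\ref{Liestronglysolvable-general}. If there are loops but the characteristic of $K$ is $2$, the natural tool is Corollary~\ref{loops2-nongraded}, combined with Lemma~\ref{rad=nonrad} to guarantee that $\HH^1_{rad}(A) = \HH^1(A)$, so that solvability of $\HH^1_{rad}$ transfers to the full first Hochschild cohomology. If there are loops and the characteristic is not $2$, I would pass to the graded algebra $A/J(A)^3$, verify by a direct computation that $\Sigma_0(A/J(A)^3)$ is empty (using the presence of suitable quadratic monomial relations, exactly as in the dihedral and $D(3\mathcal{R})$ cases of Theorem~\ref{DSQ}), and invoke Theorem~\ref{trivialclasses} to get solvability of $\HH^1(A/J(A)^3)$; then \cite[Cor. 2.5]{ER} yields solvability of $\HH^1_{rad}(A)$, and finally Lemma~\ref{rad=nonrad} (or \cite[Prop. 2.7]{ER} if the loop powers satisfy the numerical condition) closes the gap between $\HH^1_{rad}(A)$ and $\HH^1(A)$.

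The main obstacle I anticipate is twofold. First, the emptiness of $\Sigma_0$ is the crux: I would need to check, loop by loop, that every loop $\alpha$ appears in a relation forcing $\delta^1(\alpha \| e) \neq 0$ — typically because $\alpha$ sits in a monomial relation such as $\alpha c = 0$ or $c \alpha = 0$ adjacent to the loop, producing a non-zero summand $\alpha c \| c$ in $\delta^1(\alpha \| e)$, so that $\alpha \| e$ can never be a summand of a cocycle. Second, in characteristic $2$ the identification $\HH^1_{rad}(A) = \HH^1(A)$ can genuinely fail whenever some loop $\alpha$ satisfies $\alpha^2 = 0$, as the Brauer-graph-algebra discussion already warns; in that situation the reduction argument does not directly apply, and one would instead verify solvability of $\HH^1(\Omega(n))$ by a direct computation showing that the second derived Lie algebra vanishes, analogously to the explicit check performed for the Remark following Theorem~\ref{Brauer}. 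Because $\Omega(n)$ is a single explicit family rather than a generic algebra, I expect this direct verification to be finite and tractable once the relations are written out, with the genuine subtlety confined to keeping track of the characteristic-$2$ special cases.
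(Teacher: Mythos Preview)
Your proposal is correct and follows essentially the same approach as the paper. The paper's proof is extremely brief: it notes that the quiver of $\Omega(n)$ coincides with that of a Brauer graph algebra with one loop and one cycle of length at least $2$, and then simply refers back to the argument for Brauer graph algebras (Theorem~\ref{Brauer}) together with, in characteristic $2$, the argument of Proposition~\ref{prop:nonsingular Euclidean} (where $\HH^1_{rad}(A)=\HH^1(A)$ is checked by hand). Your unpacking of what those referenced arguments actually entail---passing to $A/J(A)^3$, verifying $\Sigma_0$ is empty via adjacent monomial relations, applying Theorem~\ref{trivialclasses} or Corollary~\ref{loops2}, and then closing the gap between $\HH^1_{rad}$ and $\HH^1$ via Lemma~\ref{rad=nonrad} or a direct check---is exactly what the paper intends.
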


\begin{proof}
The algebras $\Omega(n)$ have the same quiver as a Brauer graph algebra with one loop and one cycle of length at least 2. An argument similar to that for Brauer graph algebras in the proof of Theorem~\ref{Brauer} and in characteristic 2 similar to the proof of Proposition~\ref{prop:nonsingular Euclidean} give the result. 
\end{proof}

The case where $A$ has singular Cartan matrix is covered by the following results. 
Let us recall the following theorem.

\begin{Theorem}[\cite{Skow}]
\label{tubularsingularCartan}
Let $A$ be a self-injective algebra. The following statements are equivalent:
\begin{itemize}
\item $A$ is symmetric of tubular type and has singular Cartan matrix.
\item $A$ is derived equivalent to the trivial extension of a canonical tubular algebra.
\end{itemize}
\end{Theorem}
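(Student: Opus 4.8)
The statement is a structural classification taken from Skowro\'nski's program on self-injective algebras of polynomial growth, so the plan is to derive both implications from the theory of orbit algebras of repetitive algebras, supplemented by a determinant computation for the Cartan matrix of a trivial extension. Throughout, write $D = \mathrm{Hom}_K(-,K)$ for the $K$-dual, $\hat B$ for the repetitive algebra of an algebra $B$, and $\nu$ for its Nakayama shift.

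I would begin with the easier implication, namely that the trivial extension $T(B)$ of a canonical tubular algebra $B$ is symmetric of tubular type with singular Cartan matrix. Trivial extensions are always symmetric, and since $B$ is tubular its trivial extension is self-injective of tubular type, the tubular structure being inherited from $\hat B$. The essential point is the Cartan matrix. From $T(B) = B \ltimes DB$ one computes that the $(i,j)$-entry of its Cartan matrix is $\dim_K e_i B e_j + \dim_K e_j B e_i$; that is, the Cartan matrix of $T(B)$ equals $C_B + C_B^{\mathrm{tr}}$, where $C_B$ denotes the Cartan matrix of $B$. As $B$ has finite global dimension, $C_B$ is invertible, and taking $P = C_B^{-\mathrm{tr}}$ one has $P^{\mathrm{tr}}(C_B + C_B^{\mathrm{tr}})P = C_B^{-1} + C_B^{-\mathrm{tr}}$, the Gram matrix of the symmetrized Euler form of $B$. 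For a canonical algebra of tubular type this symmetrized Euler form is non-negative with a two-dimensional radical (the two central slopes), hence degenerate; being congruent to it, $C_B + C_B^{\mathrm{tr}}$ is singular, so the Cartan matrix of $T(B)$ is singular.

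For the converse I would invoke the classification of self-injective algebras of tubular type as orbit algebras $\hat B / G$ of the repetitive algebra of a tubular algebra $B$ modulo an admissible infinite cyclic group $G = \langle \varphi\nu^n\rangle$. The trivial extension $T(B)$ is precisely the orbit algebra for $G = \langle\nu\rangle$, i.e. $n = 1$ and $\varphi = \mathrm{id}$. The remaining step is to show that singularity of the Cartan matrix forces $n = 1$ and $\varphi = \mathrm{id}$: for a higher period ($n \ge 2$) or a genuine twisting $\varphi$, the Cartan matrix of $\hat B/G$ picks up additional contributions from the intermediate levels of $\hat B$ and becomes nonsingular, whereas only the full one-period orbit reproduces the degenerate symmetrized Euler form computed above. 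Once $A$ is identified with $T(B)$ for a tubular algebra $B$, the facts that every tubular algebra is tilting--cotilting equivalent to a canonical tubular algebra and that derived equivalence of algebras of finite global dimension lifts to their repetitive, hence orbit, algebras together yield that $A$ is derived equivalent to the trivial extension of a canonical tubular algebra.

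The hard part is this last structural input: the orbit-algebra classification of self-injective algebras of tubular type and the precise dictionary between the generator of $G$ and the invertibility of the Cartan matrix. This is exactly the deep content of Skowro\'nski's work, and in the present context it is legitimately used as a cited black box from \cite{Skow}; the only genuinely self-contained ingredient is the Euler-form computation that settles the trivial-extension direction and pins down when the symmetrized Cartan matrix degenerates.
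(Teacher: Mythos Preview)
The paper does not prove this theorem at all: it is quoted from Skowro\'nski's survey \cite{Skow} and used as a black box, exactly as the bracketed citation in the theorem header indicates. There is therefore no ``paper's own proof'' to compare against; your sketch already goes well beyond what the present paper attempts.

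That said, your outline is reasonable. The first implication is essentially complete: the identity $C_{T(B)} = C_B + C_B^{\mathrm{tr}}$ is correct, your congruence computation checks out, and the symmetrized Euler form of a tubular algebra is indeed positive semidefinite with two-dimensional radical, so $C_{T(B)}$ is singular. For the converse you rightly identify the orbit-algebra classification as the substantive input and defer to \cite{Skow}; the one soft spot is the claim that for $n \ge 2$ or a nontrivial twist $\varphi$ the Cartan matrix ``picks up additional contributions \dots\ and becomes nonsingular'' --- this is asserted rather than argued, and making it precise is genuinely the work done in Skowro\'nski's papers. Since you explicitly flag this as the cited black box, the proposal is honest about its scope.
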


\begin{Proposition}
Let $A$ be derived equivalent to the trivial extension of a canonical tubular algebra. Then $\mathrm{HH}^1(A)$ is solvable.
\end{Proposition}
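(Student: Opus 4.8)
The plan is to follow the argument of Proposition~\ref{prop:canonical} almost verbatim, the only genuinely new input being a slightly heavier verification of the $\Sigma_1$-condition. Since $\mathrm{HH}^1$ is invariant under derived equivalence by \cite{K}, it suffices to treat $A = T(C)$ for $C = C(p_1,\dots,p_t)$ a canonical tubular algebra, i.e. one of $C(2,2,2,2;\lambda)$, $C(3,3,3)$, $C(4,4,2)$ or $C(6,3,2)$. First I would record the quiver of $A$. The quiver of $C$ has a source $0$ and a sink $\infty$ joined by $t$ arms of lengths $p_1,\dots,p_t\ge 2$, and the defining relations of a canonical algebra are $t-2$ linear relations among the $t$ arm-paths from $0$ to $\infty$. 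Hence the bimodule socle of $C$ is concentrated in the spot $(0,\infty)$ and has dimension $t-(t-2)=2$, so $T(C)$ is obtained from $C$ by adjoining exactly two parallel arrows $\alpha,\beta\colon\infty\to0$, precisely as in the Euclidean case. In particular $A$ is directed apart from these two arrows and so has no loops.

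Because $A$ has no loops, \cite[Prop. 2.7]{ER} gives $\mathrm{HH}^1(A)=\mathrm{HH}^1_{rad}(A)$ in every characteristic and $\Sigma_0(A)=\emptyset$, so it is enough to show $\mathrm{HH}^1_{rad}(A)$ is solvable. As before I would pass to the graded algebra $A/J(A)^3$; note that even though the canonical relations are inhomogeneous for the path-length grading when the $p_i$ differ, truncating at $J(A)^3$ restores path-length homogeneity, which is exactly why the reduction through $A/J(A)^3$ and \cite[Cor. 2.5]{ER} is needed rather than applying a graded theorem to $A$ itself. This truncated algebra again has no loops, so $\Sigma_0(A/J(A)^3)=\emptyset$ and the only parallel arrows are still $\alpha$ and $\beta$. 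The key step is then to compute $\ker\delta^1$ on $A/J(A)^3$ and check that neither $\alpha\,||\,\beta$ nor $\beta\,||\,\alpha$ lies in $\Sigma_1$: the quadratic relations involving $\alpha$ and $\beta$ produced by the trivial-extension (socle) identifications obstruct any cocycle sending $\alpha$ to a multiple of $\beta$. Granting this, $A/J(A)^3$ satisfies the hypotheses of Theorem~\ref{trivialclasses}, so $\mathrm{HH}^1(A/J(A)^3)$ is solvable; by \cite[Cor. 2.5]{ER} it follows that $\mathrm{HH}^1_{rad}(A)$, hence $\mathrm{HH}^1(A)$, is solvable.

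The main obstacle is precisely this $\Sigma_1$-verification, and the delicate point is the type $C(2,2,2,2;\lambda)$, whose relations carry the parameter $\lambda$. Here I would take care that the argument showing $\alpha\,||\,\beta,\ \beta\,||\,\alpha\notin\Sigma_1$ is \emph{uniform} in $\lambda$, using only which monomials occur in the length-two relations and not the specific scalars, so that solvability holds for every admissible value of $\lambda$. Since passing to $A/J(A)^3$ truncates every arm to length at most two, the relevant data are the quadratic monomial relations coming from the two sink-to-source arrows together with the degree-two part of the socle relations, and these are independent of $\lambda$; this is what makes the computation both tractable and uniform across all four tubular types. A final point to confirm is that no loops appear at any stage (which holds because $C$ is directed), since it is this loop-free property that legitimizes the identification $\mathrm{HH}^1=\mathrm{HH}^1_{rad}$ via \cite[Prop. 2.7]{ER} in arbitrary characteristic and removes any characteristic-$2$ subtlety.
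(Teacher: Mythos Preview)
Your proposal is correct and follows essentially the same strategy as the paper: invoke derived invariance, reduce to $T(C)$ for the four tubular canonical types, observe there are no loops and exactly two parallel arrows $\alpha,\beta$, verify that $\alpha\,||\,\beta$ and $\beta\,||\,\alpha$ are not in $\Sigma_1$, and then apply Theorem~\ref{trivialclasses} together with \cite[Prop.~2.7]{ER} and \cite[Cor.~2.5]{ER}.

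The only organizational difference is that the paper splits the argument in two. For the three-arm types $C(p,q,r)$ it simply quotes Proposition~\ref{prop:canonical}, which already carries out the $A/J(A)^3$-reduction you describe. For $C(2,2,2,2;\lambda)$, however, the paper does \emph{not} pass to $A/J(A)^3$: since all four arms have length~$2$, the defining relations of $T(C)$ are already homogeneous for the path-length grading, so $T(C)$ itself is graded and Theorem~\ref{trivialclasses} applies directly to $A$. Your uniform route through $A/J(A)^3$ is perfectly valid but slightly longer in this case; conversely, your explicit count of the socle dimension (yielding exactly two new arrows) is a nice justification that the paper leaves implicit.
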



\begin{proof} If $A$ is symmetric of tubular type with singular Cartan matrix then $A$ is derived equivalent to the trivial extension of a canonical algebra of type $C(2,4,4)$, $C(3,3,3)$, $C(2,3,6)$ or $C(2,2,2,2,\lambda)$ for $\lambda \in K\ \{0,1\}$. For the algebras of the form $C(p,q,r)$, the result directly follows from Proposition~\ref{prop:canonical}. 
For $C= C(2,2,2,2,\lambda)$, we note that $T(C)$ is graded and its quiver has no loops, and it has two  parallel arrows $\alpha$ and $\beta$ from the sink of 
the quiver of $C$ to its source. We check that neither $\alpha || \beta$ nor $\beta || \alpha$ are in $\Sigma_1$. Therefore the result follows from Theorem~\ref{trivialclasses}.
\end{proof}

The structure of arbitrary standard self-injective algebras of polynomial growth is described 
by the following theorem.

\begin{Theorem}[\cite{Skow}]
Let $A$ be a nonsimple basic connected self-injective algebra. The algebra $A$ is standard of polynomial growth if and only if $A$ is 
isomorphic to a self-injective algebra of Dynkin type, Euclidean type or tubular type. 
\end{Theorem}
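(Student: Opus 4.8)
The plan is to prove the two implications separately, using the description of standard self-injective algebras as orbit algebras of repetitive categories. Recall that a self-injective algebra $A$ is \emph{standard} exactly when its basic algebra is isomorphic to an orbit algebra $\hat{B}/G$, where $\hat{B}$ is the repetitive algebra of a basic connected algebra $B$ and $G$ is an admissible infinite cyclic group of automorphisms of $\hat{B}$; the self-injective algebras of Dynkin, Euclidean and tubular type are by definition those orbit algebras for which $B$ is a tilted algebra of Dynkin type, a tilted algebra of Euclidean type, or a tubular algebra, respectively. One direction is therefore close to definitional once growth is verified: for each of the three families one checks directly that $\hat{B}/G$ is standard and that its representation type has polynomial growth. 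Via the push-down functor attached to the Galois covering $\hat{B}\to\hat{B}/G$ this reduces to the known fact that the repetitive algebra of a tilted algebra of Dynkin, Euclidean or tubular type is tame of polynomial growth.

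For the converse I would begin with an arbitrary standard self-injective algebra $A$ of polynomial growth, write $A\cong\hat{B}/G$, and transfer the growth hypothesis from $A$ back to the covering datum $B$. Since $G\cong\mathbb{Z}$ acts freely, the push-down functor relates the one-parameter families of indecomposable $A$-modules in each dimension to the corresponding families over $\hat{B}$, so polynomial growth of $A$ forces $\hat{B}$, and hence $B$, to be tame of polynomial growth. The structure theory of repetitive algebras (in the spirit of Nehring and Skowro\'nski) then pins down $B$ up to derived equivalence: a self-injective orbit algebra of its repetitive algebra is representation-finite, tame domestic, or non-domestic of polynomial growth exactly according to whether $B$ is tilted of Dynkin type, tilted of Euclidean type, or tubular. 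Combining this with tilting theory forces $B$ to be derived equivalent to a hereditary algebra of Dynkin or Euclidean type, or to a tubular algebra, which places $A$ in exactly one of the three asserted families.

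The main obstacle is the precise control of polynomial growth under the orbit construction, that is, the exact dictionary between the representation-theoretic growth of $A=\hat{B}/G$ and the type of its Galois cover $B$. Establishing that tame polynomial growth of the orbit algebra is equivalent to $B$ being tilted of Dynkin, Euclidean or tubular type requires the full covering-theoretic machinery of Gabriel and Bongartz--Gabriel together with the growth analysis of repetitive algebras, and one must carry the standardness hypothesis throughout, since the excluded non-standard algebras are governed by different covering functors and account for the parallel $\Lambda$-, $\Omega$- and $\Gamma$-type families appearing elsewhere in this section. In view of the depth of these inputs, the statement is genuinely Skowro\'nski's classification and any honest ``proof'' consists in assembling these structural results rather than in a short self-contained argument; accordingly, in the present paper it is quoted as \cite{Skow} and used only to enumerate the derived equivalence classes whose $\HH^1$ must then be shown to be solvable.
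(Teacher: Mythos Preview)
The paper does not prove this statement at all: it is quoted verbatim from \cite{Skow} as a structural classification result and used only to organise the case analysis in Section~\ref{applications}. You recognise this yourself in your final paragraph, so your proposal is not so much a competing proof as a sketch of the machinery underlying Skowro\'nski's original argument (orbit algebras $\hat{B}/G$, push-down functors, the Nehring--Skowro\'nski growth analysis of repetitive algebras). That outline is broadly accurate, but since the present paper offers nothing to compare against, there is no meaningful discrepancy to assess; the honest answer is simply that this theorem is imported from the literature and your write-up correctly identifies both that fact and the shape of the argument behind it.
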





\subsection{Quantum complete intersections}

The Hochschild cohomology of quantum complete intersections has been extensively studied, see for example \cite{BE, BKL, EH, O}. 

 Recall that $KQ/I$ is a quantum complete intersection of rank $r$, if
$$ KQ/I = K \langle X_1, \ldots, X_r \rangle / \langle X_j X_i - q_{ij} X_i X_j, X_i^{{n_i}}, \mbox{ for } 1 \leq i< j \leq r,  \rangle $$
where $n_i \in \mathbb{N}_{\ge2}$ for all $i$  and $q_{ij}\neq 0$ for all $i,j$. 

In this section we apply Proposition~\ref{localB0} 
to calculate the  Lie algebra structure of the first Hochschild cohomology of those quantum complete intersections  of rank $r$ with 
parameters different from $0$ and $1$, that is, quantum complete intersections with non commuting variables. 
In particular, it follows from our results that to a certain degree this Lie algebra structure is independent of the rank and the parameters.

\begin{Proposition} 
 Let $A$ be a quantum complete intersection of rank $r$ with $q_{ij}\neq 1$ for all $i<j$.
Then $\mathrm{HH}^1(A)$ is a strongly solvable Lie algebra and so, in particular, it is solvable. 
\end{Proposition}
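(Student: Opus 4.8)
The plan is to verify the two hypotheses of Proposition~\ref{localB0} for $A$ and then invoke it. First note that $A$ is local: its quiver $Q$ has a single vertex $e$ and $r$ loops $X_1,\dots,X_r$. The defining relations $X_jX_i-q_{ij}X_iX_j$ and $X_i^{n_i}$ are homogeneous for the length grading, so $A$ is a graded local finite dimensional algebra whose basis $\mathcal{B}$ consists of the ordered monomials $X_1^{a_1}\cdots X_r^{a_r}$ with $0\le a_i\le n_i-1$. I would take $\cR$ to consist of the quantum relations $r_{i\ell}=X_\ell X_i-q_{i\ell}X_iX_\ell$ for $i<\ell$ together with the $X_i^{n_i}$. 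It is worth recording that although Proposition~\ref{localB0} is phrased for solvability, its proof shows that $\cL_1$ abelian forces $\cL^{(1)}\subseteq\cN$, which is nilpotent; hence the same argument in fact yields strong solvability, which is what is claimed here. We may also assume $r\ge 2$, since for $r=1$ the hypothesis $q_{ij}\ne1$ is vacuous and $A=K[X]/(X^{n_1})$.

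Next I would show $\Sigma_0(A)=\emptyset$. Using $\delta^1(X_i||e)=\sum_{r,\,X_i\mid r} r||r^{(X_i,e)}$, the quantum relation $r_{i\ell}$ contributes $r_{i\ell}||(1-q_{i\ell})X_\ell$ (obtained by deleting the single $X_i$), whereas deleting $X_\ell$ from $r_{i\ell}$ produces the \emph{distinct} cochain $r_{i\ell}||(1-q_{i\ell})X_i$. Consequently, in $\delta^1\big(\sum_a c_a\,X_a||e\big)$ the coefficient of the basis cochain $r_{i\ell}||X_\ell$ is exactly $c_i(1-q_{i\ell})$. Since $r\ge 2$, every index $i$ occurs in some quantum relation, and since $q_{i\ell}\ne 1$ this coefficient forces $c_i=0$ for all $i$. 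Thus no $X_i||e$ is a summand of a cocycle, so $\Sigma_0(A)=\emptyset$, and this conclusion holds in every characteristic.

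The heart of the argument is to show $X_i||X_j\notin\Sigma_1(A)$ for $i\ne j$, equivalently that every cocycle in $K(Q_1||Q_1)\cap\ker\delta^1$ is diagonal. Such a cocycle is a derivation $d$ with $d(X_a)=\sum_b\lambda_{ab}X_b$ preserving all relations. Applying $d$ to $r_{i\ell}$ and reading off the coefficients of $X_i^2$ and of $X_\ell^2$ yields $\lambda_{\ell i}(1-q_{i\ell})=0$ and $\lambda_{i\ell}(1-q_{i\ell})=0$; since $q_{i\ell}\ne1$, these annihilate the off-diagonal coefficients as soon as $X_i^2\ne0$, respectively $X_\ell^2\ne0$, in $A$. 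Applying $d$ to $X_a^{n_a}$ and reducing with the $q$-commutation gives the complementary condition $\lambda_{ab}\big(\sum_{s=0}^{n_a-1}q^s\big)=0$, where $q$ is the commutation scalar of $X_a$ and $X_b$; this kills $\lambda_{ab}$ whenever the geometric sum is nonzero. Combining the two families of conditions forces every off-diagonal $\lambda_{ab}$ to vanish, so $d$ is diagonal and $X_i||X_j\notin\Sigma_1$. With $\Sigma_0=\emptyset$ and no $X_i||X_j$ in $\Sigma_1$ for $i\ne j$, Proposition~\ref{localB0} (in its strong form noted above) gives that $\mathrm{HH}^1(A)$ is strongly solvable, and in particular solvable.

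The main obstacle is precisely the last step in its degenerate cases. When $n_b=2$ one has $X_b^2=0$, so the quantum-relation coefficient of $\lambda_{ab}$ vanishes automatically and one must fall back on the $X_a^{n_a}$-condition $\lambda_{ab}\sum_{s=0}^{n_a-1}q^s=0$, which only disposes of $\lambda_{ab}$ when $q$ is not a nontrivial $n_a$-th root of unity. The genuinely delicate sub-case is $n_a=n_b=2$ with commutation scalar $-1$ (an exterior-algebra–type relation), where the derivation $X_a\mapsto X_b$ can survive and the diagonality conclusion fails; this is exactly the point where the hypothesis $q_{ij}\ne1$ must be combined with a direct inspection of the low-degree relations, and where any further restriction on the parameters or the $n_i$ enters. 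Organising the verification $X_i||X_j\notin\Sigma_1$ so that it treats these root-of-unity and quadratic configurations uniformly is the crux of the proof.
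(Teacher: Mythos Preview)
Your overall strategy is exactly the paper's: verify the two hypotheses of Proposition~\ref{localB0} and conclude. Your treatment of $\Sigma_0$ is essentially the paper's argument (the paper looks at the summand $(1-q_{i,i+1})\,\rho_{i,i+1}||X_{i+1}$ of $\delta^1(X_i||e)$ and notes it cannot be cancelled), and your remark that the proof of Proposition~\ref{localB0} actually yields \emph{strong} solvability is correct and is what the paper invokes.

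For $\Sigma_1$ the paper's argument is shorter than yours: it simply records that $\delta^1(X_i||X_j)$ contains the summand $\rho_{ij}||(1-q_{ij})X_j^2$ (respectively $\rho_{ji}||(1-q_{ji})X_j^2$), asserts it is nonzero, and observes that the only other place $\rho_{ij}$ can arise is in $\delta^1(X_j||X_i)$, where the second component is a multiple of $X_i^2$ rather than $X_j^2$, so no cancellation is possible. You go further and notice that this ``nonzero'' summand vanishes when $n_j=2$, and you then try to fall back on the relation $X_i^{n_i}$.

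The concern you raise in your final paragraph is genuine and cannot be repaired under the stated hypotheses. Take $r=2$, $n_1=n_2=2$ and $q_{12}=-1$ over a field with $\mathrm{char}(K)\neq 2$; then $A$ is the exterior algebra on two generators and satisfies $q_{12}\neq 1$. Both of your constraints collapse here (the quantum relation contributes $2X_j^2=0$ and the power relation contributes the factor $1+(-1)=0$), and indeed one checks directly that $\delta^1(X_1||X_2)=\delta^1(X_2||X_1)=0$. Hence all four $X_i||X_j$ lie in $\ker\delta^1$, one has $\cL_0=0$ and $\cL_1\cong\mathfrak{gl}_2(K)$, and the two degree-one derivations $X_i\mapsto X_1X_2$ are inner, so $\mathrm{HH}^1(A)\cong\mathfrak{gl}_2(K)$, which is not solvable when $\mathrm{char}(K)\neq 2$. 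Thus the proposition as stated is false in this case, and neither your argument nor the paper's can be completed without an additional hypothesis (for instance $n_i\ge 3$ for all $i$, or excluding pairs $i<j$ with $n_i=n_j=2$ and $q_{ij}$ a primitive square root of unity). The paper's proof glosses over precisely the degeneracy you isolated.
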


\begin{proof}
We   denote the relations as follows:  $\rho_{ij}$ is the relation $X_jX_i -q_{ij}X_iX_j$
 and $\rho_{ii}$ is the relation  $X_i^{n_i}$, for $1 \leq i < j \leq r$. 
The first step in order to apply Proposition~\ref{localB0} is to show that  $\cL_0=0$.
 One of the terms appearing in the  image of the differential  of $X_i || e_0$ is $(1-q_{ii+1})\rho_{ii+1} || X_{i+1}$. 
As a consequence, $X_i || e_0$ is never in  
the kernel since the only other element whose image under the differential contains $\rho_{ii+1}$ is $X_{i+1} || e_0$, 
but in this case we get  $(1- q_{ii+1})\rho_{ii+1} || X_{i}$. Since $X_i \neq X_{i+1}$ the previous statement follows. 
Now, we just have to check that $X_i||X_j$ is not in $\Sigma_1$ for $i \neq j$.
For this, notice that one of the nonzero terms  in $\delta^1(X_i || X_j)$ is $\rho_{ij} || (1- q_{ij})X_j^2$ or 
$\rho_{ji} || (1- q_{ji})X_j^2$, depending on whether $i<j$ or $i>j$. Again, $\rho_{ij}$ - respectively - $\rho_{ji}$ appears again only in $\delta^1(X_j || X_i)$, 
but, as before, neither occurrence cancels the other one out. It follows from the proof of Proposition~\ref{localB0} that
  $\mathrm{HH}^1(A)$ is a strongly solvable Lie algebra.  
\end{proof}

We also consider quantum complete intersections where
we allow some of the quantum parameters to be $1$.  For convenience, we set $q_{ii} =1$ for every $i \in \{1, \dots, r\}$ and for any $i, j\in \{1, \dots, r\}$ we set
$q_{ij} = q^{-1}_{ji}$ for $1\leq j < i \leq r$. 

\begin{Proposition} 
 Let $A$ be a quantum complete intersection 
 of rank $r$ over a field of characteristic $p$ such that $n_i = p^{m_i}$ for $m_i \in    \mathbb{N}_{\ge1}$. If there exists $i \in \{1,\dots , r\}$ 
 such that  for  every $j \in \{1, \dots, r\}$ we have  $q_{ij}=1$,
 then $\mathrm{HH}^1(A)$ is not solvable.
\end{Proposition}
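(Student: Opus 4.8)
The plan is to exhibit inside $\HH^1(A)$ a copy of the Hochschild cohomology Lie algebra of a truncated polynomial ring and to invoke the fact, recalled in the introduction, that $\HH^1(K[X]/(X^r))$ is a non-solvable Jacobson--Witt algebra when $\mathrm{char}(K)\mid r$. Write $x := X_{i_0}$ and $n := n_{i_0} = p^{m_{i_0}}$. The hypothesis that $q_{i_0 j}=1$ for all $j$ says precisely that $x$ is central in $A$, while $x^n=0$. First I would introduce, for each $-1\le k\le n-2$, the derivation $d_k$ determined on generators by $d_k(x)=x^{k+1}$ and $d_k(X_j)=0$ for $j\neq i_0$, extended as a derivation of the free algebra.

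The first task is to check that each $d_k$ descends to $A$, for which it suffices to verify that $d_k$ sends every defining relation into $I$. On a commutation relation involving $x$ this is immediate from centrality, since replacing $x$ by $x^{k+1}$ preserves commutation with every $X_j$; on a power relation $X_\ell^{n_\ell}$ with $\ell\neq i_0$ the map is zero; and on $x^n$ one computes $d_k(x^n)=n\,x^{n+k}$, which vanishes because $x^n=0$ when $k\ge 0$ and because $n=p^{m_{i_0}}\equiv 0$ in $K$ when $k=-1$. This last point is exactly where the hypothesis $n_i=p^{m_i}$ enters: it guarantees that the crucial derivation $d_{-1}$, with $d_{-1}(x)=1$, is well defined. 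A short computation using the centrality of $x$ then gives $[d_k,d_\ell]=(\ell-k)\,d_{k+\ell}$, so that $W:=\langle d_{-1},\dots,d_{n-2}\rangle$ is a Lie subalgebra of $\mathrm{Der}(A)$ isomorphic to the Jacobson--Witt algebra $\mathrm{Der}(K[x]/(x^{n}))=\HH^1(K[x]/(x^{n}))$, the latter identification using that every derivation of the commutative algebra $K[x]/(x^n)$ is outer.

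Next I would show that $W$ embeds into $\HH^1(A)=\mathrm{Der}(A)/\mathrm{Inn}(A)$. Since $x$ is central, every inner derivation annihilates $x$; hence if a combination $\sum_k c_k d_k$ were inner it would satisfy $\sum_k c_k x^{k+1}=0$, and as $1,x,\dots,x^{n-1}$ are linearly independent in $A$ this forces all $c_k=0$. Thus $W\cap\mathrm{Inn}(A)=0$ and the image of $W$ in $\HH^1(A)$ is a subalgebra isomorphic to $W$. Because any subalgebra of a solvable Lie algebra is solvable, it then remains only to note that $W\cong\HH^1(K[x]/(x^{n}))$ is not solvable --- indeed perfect, and simple when $m_{i_0}=1$ and $p$ is odd --- by the Jacobson--Witt description recalled in the introduction, whence $\HH^1(A)$ is not solvable.

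The main obstacle is the characteristic $2$ case. The single-variable Witt algebra $\HH^1(K[x]/(x^{2^m}))$ is in fact metabelian, so one central variable alone does not produce non-solvability, and the argument above must be strengthened. To handle $p=2$ I would instead use the central $x$ together with a second commuting generator to build a copy of a rank-two Jacobson--Witt algebra $\mathrm{Der}(K[x,y]/(x^{p^{a}},y^{p^{b}}))$, which is non-solvable (and simple in the equal-exponent base case, as for $K[X,Y]/(X^2,Y^2)$ in the introduction). The delicate points there are verifying that the two-variable derivations are well defined on all of $A$ --- since a non-central second generator interacts with the remaining $q$-commutation relations --- and checking that they remain non-inner; this is the technical heart of the characteristic $2$ argument, and also the place where one sees that the rank-one truncated algebra $K[x]/(x^{2^m})$ genuinely has to be excluded.
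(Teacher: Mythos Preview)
Your argument for odd primes is correct and coincides with the paper's proof: both exhibit the Lie subalgebra spanned by the derivations $d_k$ with $d_k(X_{i_0})=X_{i_0}^{k+1}$ (equivalently $X_i\,||\,X_i^h$ in the paper's notation), identify it with $\mathrm{Der}(K[x]/(x^{p^{m_{i_0}}}))$, and conclude from its perfectness. Your extra care in checking that no nonzero element of this subalgebra is inner is a nice addition, but otherwise the approaches are the same.

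Your observation about characteristic $2$ is sharper than you realise, and it exposes a genuine gap that the paper itself glosses over. The paper simply asserts that the subalgebra $\mathcal{S}=\langle X_i\,||\,X_i^h : 0\le h\le p^{m_i}-1\rangle$ is perfect. As you correctly note, the one-variable Jacobson--Witt algebra $\mathrm{Der}(K[x]/(x^{2^m}))$ is metabelian, hence solvable, so this assertion fails for $p=2$ and the paper's proof breaks down there.

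Your proposed fix, however, cannot be made to work, and in fact the \emph{statement is false} in characteristic~$2$. Take $p=2$, $r=3$, $n_1=n_2=n_3=2$, $q_{12}=q_{13}=1$ and $q_{23}=q\neq 0,1$. Then $X_1$ is central and all hypotheses hold. One computes directly that $\dim\mathrm{Der}(A)=12$, $\dim\mathrm{Inn}(A)=4$, and the resulting $8$-dimensional Lie algebra $\HH^1(A)$ has derived series of lengths $8,7,4,2,0$; in particular it is solvable. The reason your two-variable idea fails here is exactly the obstruction you flagged: if $y=X_j$ is not itself central, the candidate derivations $x^ay^b\partial_x$ and $x^a\partial_y$ do not preserve the relations $X_\ell X_j-q_{j\ell}X_jX_\ell$, so one cannot embed a rank-two Witt algebra. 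Thus the correct conclusion is not that the characteristic~$2$ case needs a more delicate argument, but that the proposition requires the additional hypothesis $p\neq 2$ (or, in characteristic~$2$, that at least two of the generators are central).
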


\begin{proof}
Assume  there exists $i \in \{1,\dots , r\}$ 
 such that for  every $j \in \{1, \dots, r\}$ we have $q_{ij}=1$.
Then we show that $\mathrm{HH}^1(A)$ contains a perfect Lie algebra
 and consequently $\mathrm{HH}^1(A)$ is not solvable. 
 We first prove that $X_i || e$ belongs to  $\ker(\delta^1)$. 
 In fact, the image of $X_i || e$ under $\delta^1$ is 
 $\sum_{j=1}^{i-1} \rho_{ji}|| (X_j-q_{ji}X_j)+$
 $\rho_{ii}|| n_i  X_{i}^{n_i-1}+$
 $\sum_{j=i+1}^c \rho_{ij}|| (X_j-q_{ij}X_j)$. The first and the last sums are
 zero since 
 $q_{ij}=1$ for every $j\in \{1, \dots, r\}$.
The second expression is zero because $char (K)$ divides
 $n_i$. Therefore $X_i|| e \in \ker(\delta^1)$. Since for a fixed $i$
 all the $q_{ij}=1$,  
  it is easy to show that for 
 every $h \in \{1, \dots, p^{m_i}-1\}$ we have that $x_i ||\ x_i^h$ is
 an element of $\ker (\delta^1)$. If we denote by $X^0_i:=e$, then we have that the Lie subalgebra $\mathcal{S}$ of $\mathrm{HH}^1(A)$ with $K$-basis 
 $X_i || X^h_i$ for $h \in \{0, \dots ,p^{m_i}-1\}$ is perfect and therefore $\mathrm{HH}^1(A)$  is not
 solvable. 
 
\end{proof}


\subsection{Examples of algebras with nonsolvable first Hochschild cohomology}

In this section we give two  examples of  families of algebras for which the Lie algebra given by the first Hochschild cohomology is not solvable but rather semi-simple. They are a generalisation of  the example  of the Kronecker algebra given in \cite{CSS}, see also \cite{ER}.  In characteristic zero, the Lie
structure of the first cohomology space of both families can be deduced from \cite{SF}.

These families consist of monomial algebras with radical square zero over a field of arbitrary characteristic.  For the first one, let
 $n, m \in \mathbb{N}_{\geq 1}$. 
We denote by $Q_{n,m}$ the quiver  with $n$ vertices $1, \ldots, n$  and  a set of $m$ parallel arrows denoted by 
$\alpha_{i,1}, \dots \alpha_{i,m}$ from $i$ to $i+1$ for each pair of consecutive vertices 
$(i, i+1)$.
For $n=2$, the quiver $Q_{2,m}$ is the $m$-Kronecker quiver. Set $A_{n,m} = KQ_{n, m } / J(KQ_{n,m })^2$.

\begin{Proposition}
\label{Non-solv1}
There is an isomorphism of Lie algebras $$\mathrm{HH}^1(A_{n,m})\cong \prod_{i=1}^n \mathfrak{sl}_{m}(K),$$ for $n, m \ge 2$. 
In particular,  $\mathrm{HH}^1(A_{n,m})$ is a solvable Lie algebra if and only if $\mathrm{char}(K)=2$ and $m=2$. 
\end{Proposition}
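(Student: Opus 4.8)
The plan is to compute the first Hochschild cohomology directly from the cochain complex described in Section~\ref{Brackets}, using that $A_{n,m}$ is a radical-square-zero monomial algebra. Since $J(KQ_{n,m})^2 = 0$, the only relations are the monomial relations $\alpha_{i,k}\alpha_{j,l}$ that are zero because they are not composable, and the basis $\cB$ consists exactly of the vertices $Q_0$ and the arrows $Q_1$. Thus $\cB_i = \emptyset$ for $i \geq 2$, and the relevant piece of the complex is $K(Q_1 || Q_0) \oplus K(Q_1 || Q_1)$. The quiver has no loops (all arrows go from $i$ to $i+1$), so $K(Q_1 || Q_0) = 0$, and $\mathrm{HH}^1(A_{n,m}) = \ker \delta^1 \cap K(Q_1 || Q_1)$, where the only parallel pairs of arrows are those sharing a common source vertex $i$, i.e. pairs $(\alpha_{i,k}, \alpha_{i,l})$.

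First I would identify $\ker \delta^1$. An element $\sum_{i,k,l} \lambda^{(i)}_{kl}\, \alpha_{i,k} || \alpha_{i,l}$ lies in the kernel precisely when the induced derivation respects all zero-relations; concretely $\delta^1$ records, for each composable-looking path $\alpha_{i+1,l}\alpha_{i,k}$ that is forced to vanish, the obstruction to the derivation preserving that vanishing. Carrying out this bookkeeping, the kernel condition should decouple vertex by vertex and express each block as the vanishing of a combined trace-type constraint linking the outgoing arrows at vertex $i+1$ with the incoming arrows; the net effect is that the matrices $M^{(i)} = (\lambda^{(i)}_{kl})_{k,l}$ indexing the arrows from $i$ to $i+1$ must satisfy a single scalar relation. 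Computing the bracket via Equation~(\ref{bracket}), for two elements supported at the same vertex $i$ the formula $[\alpha_{i,k}||\alpha_{i,l}, \alpha_{i,k'}||\alpha_{i,l'}]$ reduces to the ordinary matrix commutator on the $M^{(i)}$, while elements supported at different vertices commute because no substitution can create a nonzero path across vertices in a radical-square-zero algebra.

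This identifies $\mathrm{HH}^1(A_{n,m})$ with a product $\prod_{i=1}^n \mathfrak{g}_i$ of matrix Lie algebras, one for each vertex, where $\mathfrak{g}_i$ is the space of $m \times m$ matrices cut out by the single scalar kernel condition together with the commutator bracket. I expect that condition to be exactly the traceless condition (up to the usual subtraction of inner derivations / the identity matrix), yielding $\mathfrak{g}_i \cong \mathfrak{sl}_m(K)$ and hence the asserted isomorphism $\mathrm{HH}^1(A_{n,m}) \cong \prod_{i=1}^n \mathfrak{sl}_m(K)$. The last statement then follows from the structure of $\mathfrak{sl}_m(K)$ in various characteristics: it is simple, hence non-solvable, except in the degenerate case $m = 2$ with $\mathrm{char}(K) = 2$, where $\mathfrak{sl}_2(K)$ is nilpotent (its derived algebra is one-dimensional), so the product is solvable exactly when $m = 2$ and $\mathrm{char}(K) = 2$.

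The main obstacle will be pinning down the precise kernel condition, i.e. verifying that the obstruction imposed by $\delta^1$ at each vertex is exactly the trace-zero condition rather than some shifted or coupled constraint, and correctly accounting for inner derivations so that the identity matrices are quotiented out to land in $\mathfrak{sl}_m$ rather than $\mathfrak{gl}_m$. Care is needed because the scalar relation at vertex $i$ may involve the arrows feeding into vertex $i$ as well as those leaving it, so I would want to check that the indexing genuinely separates into $n$ independent $\mathfrak{sl}_m$ factors and that the boundary vertices $1$ and $n$ do not alter the count. Once the matrix identification and the characteristic-dependent simplicity of $\mathfrak{sl}_m(K)$ are in hand, the solvability dichotomy is immediate.
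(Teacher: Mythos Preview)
Your overall strategy matches the paper's: compute directly from the cochain complex, identify each block with a matrix Lie algebra via the bracket formula, observe that different blocks commute, and appeal to the (non-)solvability of $\mathfrak{sl}_m(K)$. The paper carries this out by exhibiting the explicit basis $\{\alpha_{i,j}\,||\,\alpha_{i,l} : j\neq l\} \cup \{\alpha_{i,h}\,||\,\alpha_{i,h} - \alpha_{i,h+1}\,||\,\alpha_{i,h+1}\}$ and matching it to the standard basis $e_{st}, h_s$ of $\mathfrak{sl}_m$.

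There is, however, a genuine confusion in your outline about where the trace constraint comes from. First, the relations generating $I = J(KQ_{n,m})^2$ are the \emph{composable} length-two paths $\alpha_{i+1,l}\alpha_{i,k}$; non-composable products are already zero in $KQ_{n,m}$ and impose nothing. Second, for any such relation, replacing $\alpha_{i,k}$ by a parallel arrow $\alpha_{i,k'}$ produces another composable length-two path, which is again a relation and hence zero in $A_{n,m}$. Thus $\delta^1$ vanishes identically on $K(Q_1||Q_1)$: there is no kernel condition at all, and $\ker\delta^1 \cong \prod_i \mathfrak{gl}_m(K)$. You also wrote $\mathrm{HH}^1(A_{n,m}) = \ker\delta^1 \cap K(Q_1||Q_1)$, forgetting the quotient by $\mathrm{Im}\,\delta^0$; that quotient is where the scalar constraint actually lives. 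Since $Q_{n,m}$ has no loops, $K(Q_0||\cB)$ is spanned by the elements $e_j||e_j$, and $\delta^0(e_j||e_j) = \sum_{s(\alpha)=j}\alpha||\alpha - \sum_{t(\alpha)=j}\alpha||\alpha$; collectively these coboundaries span exactly the identity matrices in each block. So the reduction from $\mathfrak{gl}_m$ to $\mathfrak{sl}_m$ is effected entirely by the inner derivations, not by any cocycle condition. Once you relocate the constraint, the rest of your argument goes through as written.
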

  
\begin{proof}
A short calculation shows that, for $j \neq l$  where $j,l\in \{1, \dots m\}$, $h \in \{1, \dots m-1\}$ and $i \in \{1, \dots n\}$, the set
$\{\alpha_{i,h}||\alpha_{i,h} - \alpha_{i,h+1}||\alpha_{i,h+1},  \alpha_{i,j} || \alpha_{i,l} \}$  is a basis of $\mathrm{HH}^1(A_{n,m})$.  
Since the Lie 
bracket of  elements corresponding to parallel arrows starting at different vertices is zero, the Lie algebra structure of $\mathrm{HH}^1(A_{n,m})$ decomposes 
as a product of Lie algebras. The next step is to show that for a fixed $i$ each of these Lie algebras is 
isomorphic to $\mathfrak{sl}_{m}(K)$. We consider the  basis for $\mathfrak{sl}_{m}(K)$ given by the set of elementary matrices $\{e_{st}\}_{s,t}$ for $s \neq t$, together 
with $h_s = e_{ss} - e_{s+1,s+1}$.  
 For $i$ fixed,  we denote $\alpha_{i,j}$ by $\alpha_j$. 
Then if we write $e_{st}:=\alpha_s || \alpha_t$ and $h_s:= \alpha_s || \alpha_s - \alpha_{s+1} || \alpha_{s+1}$ it is easy to show that the above statement follows. 
Therefore $\mathrm{HH}^1(A)$ is isomorphic to $\prod_{i=1}^n \mathfrak{sl}_{m}(K)$.  Since $\mathfrak{sl}_m(K)$ is solvable only for $m=2$ in $\mathrm{char}(K)=2$,   the Lie algebra obtained is solvable if and only if
$\mathrm{char}(K)=2$ 
and $m=2$.
\end{proof}

An analogous proof to the above shows the following generalisation of Proposition~\ref{Non-solv1}. Let $n, {\mathbf m}$ be such that  
$\mathbf{m} = (m_1, \ldots, m_k)$   and $ n, m_1, \ldots, m_k \in \mathbb{N}_{\geq 1}$.
We denote by $Q_{n,{\mathbf m }}$ the quiver  with $n$ vertices $1, \ldots, n$  and  a set of $m_i$ parallel arrows denoted by 
$\alpha_{i,1}, \dots \alpha_{i,m_i}$ from $i$ to $i+1$ for each pair of consecutive vertices 
$(i, i+1)$. 
For $n=2$ and ${\mathbf m} = m$, the quiver $Q_{2,m}$ is the $m$-Kronecker quiver. Set $A_{n,{\mathbf m }} = KQ_{n,{\mathbf m }} / J(KQ_{n,{\mathbf m }})^2$.

\begin{Corollary} For $A_{n,{\mathbf m }}$ as above, we have
$$ \mathrm{HH}^1(A_{n,{\mathbf m }})\cong \prod_{i=1}^n \mathfrak{sl}_{m_i}(K),$$ for $n, m_1, \ldots, m_k \ge 2$. 
\end{Corollary}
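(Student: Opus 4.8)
The plan is to run the proof of Proposition~\ref{Non-solv1} block by block, the only new feature being that the number of parallel arrows now varies with the chosen pair of consecutive vertices. Since $A_{n,\mathbf{m}}$ has radical square zero, its basis $\mathcal{B}$ consists exactly of the vertices and the arrows, so $\mathcal{B}_j=\emptyset$ for $j\ge 2$ and the relevant cochain space $K(Q_1||\mathcal{B})$ collapses to $K(Q_1||Q_1)$; there are no loops, hence $\cL_0=0$ and $K(Q_0||\mathcal{B})=K(Q_0||Q_0)$. For a fixed pair $(i,i+1)$ the parallel arrows $\alpha_{i,1},\dots,\alpha_{i,m_i}$ span the subspace generated by the $\alpha_{i,j}||\alpha_{i,l}$, which I would identify with $\mathfrak{gl}_{m_i}(K)$ through $\alpha_{i,j}||\alpha_{i,l}\mapsto e_{jl}$, exactly as in Proposition~\ref{Non-solv1}.

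First I would show that $\delta^1$ annihilates every such generator: substituting one parallel arrow for another inside a length-two monomial relation yields another length-two path, which is zero in $A_{n,\mathbf{m}}$, so $r^{(\alpha,p)}=0$ and therefore $\ker\delta^1=K(Q_1||Q_1)$. Next I would compute $\mathrm{Im}\,\delta^0$ from $\delta^0(e||e)=\sum_{\alpha\in Q_1}\alpha||(\alpha e-e\alpha)$; evaluating this at each vertex produces combinations of the ``trace'' elements $\sum_j\alpha_{i,j}||\alpha_{i,j}$ of the blocks adjacent to that vertex. Thus $\mathrm{HH}^1(A_{n,\mathbf{m}})$ is $K(Q_1||Q_1)$ modulo these trace relations.

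The product decomposition then comes from the bracket~(\ref{bracket}): for arrows lying in different blocks no substitution is possible, so $[\alpha_{i,j}||\alpha_{i,l},\alpha_{i',j'}||\alpha_{i',l'}]=0$ whenever $i\ne i'$, and $\mathrm{HH}^1$ splits as a direct product with one factor per set of parallel arrows. Within the $i$-th block the bracket~(\ref{bracket}) reproduces (under the identification fixed in Proposition~\ref{Non-solv1}) the matrix commutator on $\mathfrak{gl}_{m_i}(K)$, while the image of $\delta^0$ removes the identity direction of that block, leaving $\mathfrak{gl}_{m_i}(K)/K\,\mathrm{Id}\cong\mathfrak{sl}_{m_i}(K)$. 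Choosing the basis $\{\alpha_{i,j}||\alpha_{i,l}\ (j\ne l),\ \alpha_{i,h}||\alpha_{i,h}-\alpha_{i,h+1}||\alpha_{i,h+1}\}$ of each factor makes the isomorphism explicit and gives $\mathrm{HH}^1(A_{n,\mathbf{m}})\cong\prod_i\mathfrak{sl}_{m_i}(K)$.

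The step to watch is the quotient by $\mathrm{Im}\,\delta^0$, since these relations link the blocks through their shared vertices rather than acting on a single block at a time. I would check that the element $\sum_j\alpha_{i,j}||\alpha_{i,j}$ is indeed the identity of the $i$-th matrix algebra, and that the relations coming from the vertices identify these trace directions across adjacent blocks so as to collapse precisely to the single traceless condition inside each $\mathfrak{gl}_{m_i}(K)$, without ever mixing the traceless parts of distinct factors. This is the only place where the varying sizes $m_i$ play a role, and it is exactly what the phrase ``an analogous proof'' conceals; everything else transfers verbatim from the proof of Proposition~\ref{Non-solv1}.
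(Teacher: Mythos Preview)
Your proposal is correct and is precisely the ``analogous proof'' the paper invokes: the paper gives no separate argument for the Corollary, so spelling out the block-by-block identification with $\mathfrak{sl}_{m_i}(K)$ and the computation of $\mathrm{Im}\,\delta^0$ is exactly what is intended. Your caution about $\mathrm{Im}\,\delta^0$ linking adjacent blocks is well placed but resolves cleanly, since the coboundaries lie in the span of the trace elements $T_i=\sum_j\alpha_{i,j}||\alpha_{i,j}$ and hence never touch the traceless part of any block.
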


It can be shown in a similar way that $\mathrm{HH}^1(K Q_{n,{\mathbf m }} )
\cong \prod_{i=1}^n \mathfrak{sl}_{m_i}(K),$ for $n, m_1, \ldots, m_k \ge 2$.

\begin{Corollary}
\label{Non-solv2}
Let $A = KQ/I$ be a  finite dimensional algebra such that $Q$ contains $Q_{n,m}$
 as a subquiver and the other arrows of $Q$ form a simple directed graph. Suppose that $I$ contains all or none of the relations of degree 
$2$ involving the arrows of $Q_{n,m}$  and that for any arrow $\alpha$ in $Q_{n,m}$ and any arrow $\beta$ 
not in $Q_{n,m}$, either $\alpha\beta\notin I$ or $\beta\alpha \notin I$.  
Then $$\mathrm{HH}^1(A)/\mathrm{rad}(\mathrm{HH}^1(A))\cong \prod_{i=1}^n \mathfrak{sl}_{m}(K).$$
\end{Corollary}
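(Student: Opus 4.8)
The plan is to reduce the computation for the full algebra $A$ to the semi-simple quotient of $\mathrm{HH}^1$, exploiting that the only non-solvable contribution comes from the copies of $\mathfrak{sl}_m(K)$ attached to the $Q_{n,m}$ part, as already established in Proposition~\ref{Non-solv1}. The key structural input is that the extra arrows of $Q$ form a simple directed graph, so by Theorem~\ref{Liestronglysolvable-general} the portion of $\mathrm{HH}^1$ coming from those arrows is strongly solvable and hence contributes only to the radical. The combinatorial hypotheses on $I$ are there precisely to guarantee two things: first, that the elements $\alpha_{i,j}\|\alpha_{i,l}$ and $\alpha_{i,h}\|\alpha_{i,h}-\alpha_{i,h+1}\|\alpha_{i,h+1}$ coming from the $Q_{n,m}$ arrows still lie in $\ker\delta^1$ and still satisfy the $\mathfrak{sl}_m$ bracket relations (the condition that $I$ contains all or none of the degree-$2$ relations among $Q_{n,m}$ arrows keeps these cocycles intact and their brackets unchanged); second, that the cross-brackets between a $Q_{n,m}$ arrow and an outside arrow do not produce further non-solvable behaviour.

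First I would exhibit the explicit cocycles $e_{st}:=\alpha_{i,s}\|\alpha_{i,t}$ and $h_s := \alpha_{i,s}\|\alpha_{i,s}-\alpha_{i,s+1}\|\alpha_{i,s+1}$ as in the proof of Proposition~\ref{Non-solv1}, verify that each lies in $\ker\delta^1$ using the ``all or none'' hypothesis on the quadratic relations, and confirm they span a subalgebra $\mathcal{S}\cong\prod_{i=1}^n\mathfrak{sl}_m(K)$ of $\mathrm{HH}^1(A)$. Next I would identify the span $\mathcal{T}$ of the remaining cocycles, namely those supported on arrows outside $Q_{n,m}$ together with any ``mixed'' contributions, and argue that $\mathcal{T}$ is contained in the radical. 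Since the outside arrows form a simple directed graph (no loops, no parallel arrows), the argument of Theorem~\ref{Liestronglysolvable-general} (via the graded analysis of Theorem~\ref{Liestronglysolvable}) shows the corresponding piece is strongly solvable. The hypothesis that for any $\alpha$ in $Q_{n,m}$ and $\beta$ outside, either $\alpha\beta\notin I$ or $\beta\alpha\notin I$, is then used to show that the ideal generated by $\mathcal{T}$ together with the cross-terms remains solvable: a bracket $[\alpha_{i,j}\|\alpha_{i,l},\,\beta\|b]$ lands in the solvable part because the replacement of $\beta$ by a path through $Q_{n,m}$ cannot create a new $\mathfrak{sl}_m$-type cocycle.

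The cleanest way to organise this is to show that $\mathrm{rad}(\mathrm{HH}^1(A))$ is exactly the maximal solvable ideal containing $\mathcal{T}$ and that the Levi-type complement is $\mathcal{S}$: one establishes that $\mathcal{S}$ normalises $\mathcal{T}$, that $[\mathcal{S},\mathcal{T}]\subseteq\mathcal{T}$, and that $\mathrm{HH}^1(A)=\mathcal{S}\oplus\mathcal{T}$ as vector spaces, whence $\mathrm{HH}^1(A)/\mathcal{T}\cong\mathcal{S}$. Because $\mathcal{S}$ is a product of $\mathfrak{sl}_m$'s it is its own radical quotient, so modding out by the radical of $\mathrm{HH}^1(A)$ yields $\prod_{i=1}^n\mathfrak{sl}_m(K)$, as claimed.

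The main obstacle I anticipate is the bookkeeping in the cross-bracket step: one must check carefully that brackets between the $\mathfrak{sl}_m$-cocycles and the outside cocycles, as well as brackets among the outside cocycles, never escape the solvable part $\mathcal{T}$. The substitution rule in Equation~(\ref{bracket}) can create higher-length paths, and it is precisely the directed-graph condition together with the $\alpha\beta\notin I$ or $\beta\alpha\notin I$ hypothesis that must be invoked to ensure these terms either vanish or remain in a nilpotent piece. Verifying that $\mathcal{S}$ genuinely embeds as a subalgebra (rather than merely surjecting after quotienting) and that it is complementary to the radical, rather than overlapping it, is the delicate point that the hypotheses are designed to guarantee.
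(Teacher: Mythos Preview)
Your proposal is correct and follows essentially the same approach as the paper: decompose $\mathrm{HH}^1(A)$ as the direct sum of the $\prod_{i=1}^n\mathfrak{sl}_m(K)$ piece coming from the $Q_{n,m}$ arrows and a solvable complement, then read off the radical quotient. The paper's own proof is a single sentence asserting this decomposition ``by construction,'' whereas you spell out the verification that the $\mathfrak{sl}_m$-cocycles survive in $\ker\delta^1$, that the complement is solvable via Theorem~\ref{Liestronglysolvable-general}, and that the cross-brackets behave --- this added detail is exactly what the hypotheses are engineered for, and your identification of the cross-bracket bookkeeping as the delicate point is accurate.
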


\begin{proof}
By construction, $\mathrm{HH}^1(A)= \prod_{i=1}^n \mathfrak{sl}_{m}(K) \oplus \mathcal{S}$, where 
$\mathcal{S}$ is a solvable Lie algebra. The statement follows.   
\end{proof}

It follows from Corollary~\ref{Non-solv2}, see also \cite{ER} in characteristic zero, that the first Hochschild cohomology of a special 
biserial algebra $A$ (beyond the Kronecker algebra) is not necessarily solvable. This is the case, for example, when the quiver contains 
$Q_{2,m}$ for some $m$ as subquiver with all relations of length two for all arrows in $Q_{2,m}$. In that case $\mathfrak{sl}_{2}(K)$ is a Lie subalgebra of $\mathrm{HH}^1(A)$.

\end{document}